\newtheorem{theorem}{Theorem}[section]
\newtheorem{lemma}[theorem]{Lemma}
\theoremstyle{definition}
\newtheorem{example}[theorem]{Example}
\theoremstyle{remark}
\numberwithin{equation}{section}
\def\Span{\operatorname{span}}
\title{The Jones polynomial of an almost alternating link}
\author{Adam M. Lowrance}
\address{Department of Mathematics and Statistics\\
Vassar College\\
Poughkeepsie, NY} 
\email{adlowrance@vassar.edu}
\author{Dean Spyropoulos}
\address{Department of Mathematics and Statistics\\
Vassar College\\
Poughkeepsie, NY} 
\email{despyropoulos@vassar.edu}
\date{}
\begin{document}
\begin{abstract}
A link is almost alternating if it is non-alternating and has a diagram that can be transformed into an alternating diagram via one crossing change. We give formulas for the first two and last two potential coefficients of the Jones polynomial of an almost alternating link. Using these formulas, we show that the Jones polynomial of an almost alternating link is nontrivial. We also show that either the first two or last two coefficients of the Jones polynomial of an almost alternating link alternate in sign. Finally, we describe conditions that ensure an almost alternating diagram has the fewest number of crossings among all almost alternating diagrams of the link.
\end{abstract}

\maketitle

\section{Introduction}

A link diagram is {\em alternating} if the crossings alternate over, under, over, under, etc. as one traverses each component of the link, and a link is {\em alternating} if it has an alternating diagram. Otherwise, a link is {\em non-alternating}. Alternating links form an important and well-studied class of links. Invariants of alternating links often take on special forms, and the complement of an alternating link has a particularly nice geometric structure \cite{Menasco:AltHyp}. Despite their diagram-dependent definition, alternating knots have recently been shown to have topological characterizations \cite{Greene:Alternating, Howie:Alternating}.

Adams et al. \cite{Adams:Almost} generalized alternating links to the class of almost alternating links.  A link diagram is {\em almost alternating} if one crossing of the diagram can be changed to transform it into an alternating diagram. A link is {\em almost alternating} if it is non-alternating and has an almost alternating diagram. Following the topological characterization of alternating knots, almost alternating knots were also shown to have topological characterizations \cite{Ito:AlmostAlt, Kim:Toroidally}. 

Another generalization of alternating links are links of Turaev genus one. Turaev \cite{Turaev:Jones} gave an alternate proof that the span of the Jones polynomial of a non-split alternating link equals its crossing number. In his proof, he associated to each link diagram an oriented Heegaard surface on which the link has an alternating projection, now known as the {\em Turaev surface} of the link diagram. The genus of the Turaev surface of a connected link diagram $D$ is given by
$$g_T(D) = \frac{1}{2}(2 + c(D) - s_A(D) - s_B(D))$$
where $c(D)$ is the number of crossings in $D$ and $s_A(D)$ and $s_B(D)$ are the number of components in the all-$A$ and all-$B$ Kauffman states of $D$ respectively. The {\em Turaev genus} $g_T(L)$ of a link $L$ is 
$$g_T(L) = \min\{g_T(D)~|~D~\text{is a diagram of}~L\}.$$ Turaev \cite{Turaev:Jones} proved that a link is alternating if and only if it is Turaev genus zero (see also \cite{DFKLS:Jones}). Armond and Lowrance \cite{ArmLow:Turaev} proved that every link of Turaev genus one is mutant to an almost alternating link. In this article, we study the Jones polynomial of a link that is almost alternating or has Turaev genus one.

The Jones polynomial $V_L(t)$ of a link $L$ with $\ell$ components is a Laurent polynomial, first discovered by Jones \cite{Jones:Polynomial}. A fundamental open question about the Jones polynomial is whether it detects the unknot.
Jones \cite{Jones:Ten} conjectured that if a knot has the same Jones polynomial as the unknot, then the knot is the unknot. The Jones unknotting conjecture has been verified in many cases. Kauffman \cite{Kauffman:StateModels}, Murasugi \cite{Murasugi:Jones}, and Thistlethwaite \cite{Thistlethwaite:Jones} proved that the span of the Jones polynomial of a non-split alternating link equals its crossing number, and thus for alternating knots, the Jones unknotting conjecture holds. Lickorish and Thistlethwaite \cite{LT:Adequate} and Stoimenow \cite{Stoimenow:Semi} showed that the Jones unknotting conjecture holds for adequate and semi-adequate knots respectively. Computational results by Hoste, Thistlethwaite, and Weeks \cite{HTW:Tables}; Dasbach and Hougardy \cite{DH:JonesUnknot}; Yamada \cite{Yamada:JonesUnknot}; and Tuzun and Sikora \cite{TS:JonesUnknot} have verified the Jones unknotting conjecture for all knots with at most 22 crossings.

Many authors have developed strategies to produce a nontrivial knot with trivial Jones polynomial, so far without success. Bigelow \cite{Bigelow:JonesUnknot} and Ito \cite{Ito:Braid} proved that if the Burau representation of the four stranded braid group is not faithful, then there exists a nontrivial knot with trivial Jones polynomial. Anstee, Przytycki, and Rolfsen \cite{APR:Mutation}; Jones and Rolfsen \cite{Jones:Mutation};  Rolfsen \cite{Rolfsen:Mutation}; and Przytycki \cite{Przytycki:Mutation} used generalized forms of mutation to attempt to produce nontrivial knots with trivial Jones polynomial. Kauffman \cite{Kauffman:Virtual} produced virtual knots with trivial Jones polynomial, and perhaps one of these examples could be classical. Cohen and Krishnan \cite{CohenKrishnan:Jones} proposed a probabilistic approach for showing that there exists a nontrivial knot with trivial Jones polynomial, and they refined that approach together with Even-Zohar \cite{CEK:Random}.

Although it is an open question whether the Jones polynomial detects the unknot, it is known that for each $\ell\geq 2$, there exist a nontrivial $\ell$-component link whose Jones polynomial equals the Jones polynomial of the $\ell$-component unlink. Thistlethwaite \cite{Thistlethwaite:TrivialLink} found the first examples of nontrivial links with trivial Jones polynomial via a computer tabulation. Eliahou, Kauffman, and Thistlethwaite \cite{EKT:TrivialLink} later generated infinite families of nontrivial $\ell$-component links with trivial Jones polynomials for each $\ell\geq 2$. Despite these examples, for many well-studied classes of links (e.g. alternating, adequate, and semi-adequate links), every nontrivial link has a nontrivial Jones polynomial. 

 The $\ell$-component unlink $\bigcirc \sqcup \cdots \sqcup \bigcirc$ has Jones polynomial $V_{\bigcirc \sqcup \cdots \sqcup \bigcirc}(t) = \left(-t^{\frac{1}{2}}-t^{-\frac{1}{2}}\right)^{\ell-1}$.  Our first main theorem is a slightly stronger version of the statement that every almost alternating or Turaev genus one link has nontrivial Jones polynomial.
\begin{theorem}
\label{theorem:AANontrivial}
Let $L$ be an $\ell$-component almost alternating link or a link of Turaev genus one where $\ell\geq 1$, and let $V_L(t)$ be the Jones polynomial of $L$. Then 
$$V_L(t)\neq t^k \left(-t^{\frac{1}{2}}-t^{-\frac{1}{2}}\right)^{\ell-1}$$
for any $k\in\mathbb{Z}$. In particular, the Jones polynomial of $L$ is different from the Jones polynomial of the $\ell$-component unlink.
\end{theorem}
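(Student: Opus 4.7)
My plan is to reduce the theorem to the two structural facts about the Jones polynomial of an almost alternating link announced in the abstract: the explicit formulas for the first two and last two potential coefficients of $V_L(t)$, and the statement that at least one of these two consecutive pairs alternates in sign. The target polynomials $t^k(-t^{1/2}-t^{-1/2})^{\ell-1}$ are extremely rigid, and the key observation is that their coefficient patterns are incompatible with any sign alternation between consecutive coefficients.

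First I would expand
\[
t^k\bigl(-t^{1/2}-t^{-1/2}\bigr)^{\ell-1}=(-1)^{\ell-1}\sum_{i=0}^{\ell-1}\binom{\ell-1}{i}\,t^{\,k+(\ell-1)/2-i},
\]
and note that every nonzero coefficient of this Laurent polynomial has the same sign $(-1)^{\ell-1}$. Now assume $L$ is almost alternating and suppose for contradiction that $V_L(t)$ equals such a polynomial. When $\ell\geq 2$ the target has at least two consecutive nonzero coefficients, necessarily of the same sign, so neither the first two nor the last two coefficients of $V_L(t)$ could alternate in sign, contradicting the sign-alternation theorem. When $\ell=1$ the target is a single monomial $t^k$; in that case I would appeal to the coefficient formulas to conclude that the pair which alternates in sign consists of two nonzero terms, which forces $V_L(t)$ to have breadth at least one and hence to differ from any monomial. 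Either way we reach a contradiction.

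For the Turaev genus one case I would invoke the theorem of Armond and Lowrance \cite{ArmLow:Turaev} that every Turaev genus one link is mutant to an almost alternating link, together with the invariance of the Jones polynomial under mutation. Since the right-hand side of the asserted inequality depends only on $\ell$ and $k$, and mutation preserves both the number of components and the Jones polynomial, the Turaev genus one case reduces to the almost alternating case already handled. The main obstacle I anticipate is the $\ell=1$ situation: I need to confirm from the explicit formulas that at least one end of $V_L(t)$ contributes two genuinely nonzero consecutive coefficients, so that breadth at least one is guaranteed and no coincidental cancellation can reduce $V_L(t)$ to a monomial; the $\ell\geq 2$ case, by contrast, is essentially immediate from the sign-alternation statement.
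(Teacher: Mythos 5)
Your handling of the $\ell\geq 2$ case and the reduction of the Turaev genus one case to the almost alternating case via mutation invariance both match the paper's argument exactly, and both are correct: the target polynomial has consecutive nonzero coefficients of equal sign, which contradicts Theorem \ref{theorem:Sign}. The problem is the $\ell=1$ case, where you hope to ``confirm from the explicit formulas that at least one end of $V_L(t)$ contributes two genuinely nonzero consecutive coefficients.'' That confirmation is not available, and this is precisely where the real work of the paper lies. Theorem \ref{theorem:Sign} only asserts $a_0a_1\leq 0$ or $a_{n-1}a_n\leq 0$, which is perfectly consistent with one member of the alternating pair being zero; and the formulas of Theorem \ref{theorem:AAJones} show that the extreme coefficients genuinely can vanish, e.g.\ $\alpha_0=(-1)^v(P-1)=0$ whenever $P=1$, and $\alpha_1$ can vanish simultaneously. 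So breadth at least one does not follow from sign alternation or from the coefficient formulas alone.

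The paper closes this gap as follows: after using Lemma \ref{lemma:Dual} to arrange $\overline{P}\in\{0,2\}$ (so $\alpha_{c-3}=\pm1$), it observes that if any of $\alpha_0,\alpha_1,\alpha_{c-4}$ is nonzero the Kauffman bracket has two nonzero coefficients and $V_L(t)$ cannot be a monomial; but in the residual case $\alpha_0=\alpha_1=\alpha_{c-4}=0$ (forcing $P=1$) it must classify the possible checkerboard graphs. Lemma \ref{lemma:F1-3} shows that when $\overline{P}=0$ the graph $\overline{G}$ falls into one of three explicit families, each yielding a $B$-adequate (hence semi-adequate) link, so Stoimenow's Theorem \ref{theorem:Semi-adequate} applies; Lemma \ref{lemma:F4-7} shows that when $\overline{P}=2$ the link is actually alternating, a contradiction. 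Without this classification and the appeal to the semi-adequate result, the knot case of the theorem is not proved; your proposal identifies the right obstacle but the strategy you sketch for overcoming it cannot work as stated.
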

A consequence of Theorem \ref{theorem:AANontrivial} is that the examples of \cite{EKT:TrivialLink} whose Jones polynomials equal $t^k\left(-t^{1/2}-t^{-1/2}\right)^{\ell-1}$ cannot be almost alternating and are of Turaev genus at least two.

Kauffman \cite{Kauffman:StateModels} proved that the absolute values of the first and last coefficients of the Jones polynomial of an alternating link are one. Dasbach and Lowrance \cite{DasLow:TuraevJones} proved that at least one of the first or last coefficient of the Jones polynomial of an almost alternating or Turaev genus one link has absolute value one. Thistlethwaite \cite{Thistlethwaite:Jones}  proved that the coefficients of the Jones polynomial of a non-split alternating link alternate in sign, that is, the product of consecutive coefficients is at most zero. Our next theorem is a partial generalization of Thistlethwaite's result to almost alternating and Turaev genus one links.
\begin{theorem}
\label{theorem:Sign}
Let $L$ be a non-split almost alternating link or a link of Turaev genus one. Suppose that the Jones polynomial of $L$ is given by
$$V_L(t) = a_0 t^{k} + a_1 t^{k+1} + \cdots + a_{n-1} t^{k+n-1}+a_n t^{k+n}$$
where $n\in\mathbb{Z}_+$, $a_i \in \mathbb{Z}$, $a_0$ and $a_n$ are nonzero, and $k\in \frac{1}{2}\mathbb{Z}$. Either
\begin{itemize}
\item $|a_0|=1$ and $a_0 a_1\leq 0$, or 
\item $|a_n|=1$ and $a_{n-1} a_{n} \leq 0$.
\end{itemize}
\end{theorem}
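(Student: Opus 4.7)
The plan is to combine three ingredients: the Armond--Lowrance reduction from Turaev genus one to almost alternating; the explicit formulas for the coefficients $a_0$, $a_1$, $a_{n-1}$, $a_n$ established earlier in this paper (as advertised in the abstract); and the theorem of Dasbach--Lowrance that at least one of $|a_0|$ or $|a_n|$ equals $1$.

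First, since every Turaev genus one link is mutant to an almost alternating link and the Jones polynomial is a mutation invariant, it suffices to prove the theorem when $L$ is a non-split almost alternating link. Fix an almost alternating diagram $D$ of $L$ with dealternator $d$, and let $D_A$ and $D_B$ denote the $A$-smoothing and $B$-smoothing of $D$ at $d$; each is essentially alternating. The Kauffman bracket identity $\langle D\rangle = A\langle D_A\rangle + A^{-1}\langle D_B\rangle$, together with Thistlethwaite's sign-alternation of coefficients for the alternating diagrams $D_A$ and $D_B$, underlies the formulas for the extremal coefficients of $V_L(t)$ proven earlier in the paper.

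By Dasbach--Lowrance we may assume without loss of generality that $|a_0|=1$, since the case $|a_n|=1$ is handled by passing to the mirror image. The equality $|a_0|=1$ means that the minimum-degree term of $\langle D\rangle$ comes uncancelled from precisely one of $D_A$, $D_B$, which forces the corresponding smoothing to be adequate and pins down the local structure at $d$. I would then read off the formula for $a_1$: it expresses $a_1$ as the second coefficient of the Jones polynomial of the dominant alternating smoothing (whose sign is opposite to $a_0$ by Thistlethwaite) plus a correction coming from the non-dominant smoothing. If the correction vanishes or carries the same sign as the main term, then $a_0 a_1\leq 0$, as required.

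The main obstacle is controlling the sign of that correction term. This amounts to a local combinatorial analysis near the dealternator in the Tait graph of the dominant alternating smoothing: one must distinguish a short list of configurations of $d$ relative to the all-$A$ and all-$B$ state circles, and rule out the cancellations in $a_1$ that could flip its sign. I expect each configuration to be resolvable by direct inspection of the explicit coefficient formulas combined with Thistlethwaite's alternating sign theorem applied to $D_A$ or $D_B$; the non-split hypothesis on $L$ is what prevents the degenerate configurations in which $a_1$ could be zero for uninteresting reasons.
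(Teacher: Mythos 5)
There is a genuine gap, and it sits exactly where you flagged the ``main obstacle.'' First, your reduction ``by Dasbach--Lowrance we may assume WLOG that $|a_0|=1$'' sets up the wrong dichotomy. Knowing that at least one of $|a_0|,|a_n|$ equals $1$ does not let you reduce to proving the implication ``$|a_0|=1\Rightarrow a_0a_1\le 0$,'' because that implication is strictly stronger than the theorem and is not accessible with the tools available: when $P=1$ one has $\alpha_0=0$, the leading Jones coefficient is then $\alpha_1$ (or deeper), it can perfectly well have absolute value one, and deciding the sign of the next coefficient would require a formula for $\alpha_2$, which Theorem \ref{theorem:AAJones} does not provide. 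The paper instead chooses which end to work at using Lemma \ref{lemma:Dual}(1): for a minimal-crossing strongly reduced diagram at least one of $P,\overline{P}$ lies in $\{0,2\}$, and it is at \emph{that} end that both $|\alpha|=1$ and the sign alternation are proved. Second, your mechanism for the coefficients is not what happens. The extreme terms of $A\langle D_A\rangle$ and $A^{-1}\langle D_B\rangle$ \emph{always} cancel; $\alpha_0=(-1)^v(P-1)$ and $\alpha_1$ arise as differences of the second and third Dasbach--Lin coefficients of the two smoothings. There is no ``dominant smoothing'' and no single uncancelled term, so Thistlethwaite's sign alternation applied to $D_A$ or $D_B$ separately cannot control the sign of these differences.

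Concretely, the content you would still need to supply is: (i) the inequality $P+Q\le\beta_1$ (Lemma \ref{lemma:Dual}(2)), which gives $\alpha_0\alpha_1\le 0$ when $P=0$ since then $\alpha_1=(-1)^{v-1}(Q-\beta_1)$; and (ii) when $P=2$, the inequality $\beta_1-1+P_2-P_0+Q-S\ge 0$, whose proof requires identifying the two extremal configurations of $G$ where this quantity would be $-1$ and showing each is a diagram of an alternating link (the two-component unlink, or $T_{2,k+1}\sqcup\bigcirc$), hence excluded by the hypothesis that $L$ is almost alternating. Neither of these is a ``local combinatorial analysis near the dealternator'': both are global statements about paths between $u_1$ and $u_2$ in the checkerboard graph and about the circuit rank, and (ii) in particular uses the non-alternating hypothesis in an essential, non-degenerate way rather than merely to avoid $a_1=0$.
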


Dasbach and Lin \cite{DL:Volumish} gave formulas for the second, third, antepenultimate, and penultimate coefficients of the Jones polynomial of an alternating link (see Theorem \ref{theorem:DasLin}). In Theorem \ref{theorem:AAJones}, we apply Dasbach and Lin's result to almost alternating links to obtain a formula for the first and last two potential coefficients of the Jones polynomial. We say the coefficients are potential coefficients since they are potentially zero. Theorem \ref{theorem:AAJones} is the main technical tool used in our proofs of Theorems \ref{theorem:AANontrivial} and \ref{theorem:Sign}. 

Theorem \ref{theorem:Crossing} addresses the question of when an almost alternating diagram has the fewest number of crossings among all almost alternating diagrams of the link. We describe conditions that ensure that an almost alternating diagram has the fewest number of crossings among all almost alternating diagrams of the link. We also describe conditions that place bounds on the fewest number of crossings an almost alternating diagram can have, and those that ensure the link admits an almost alternating diagram with fewer crossings distinct from the one that is given. We leave the complete statement of Theorem \ref{theorem:Crossing} to Section \ref{section:Formula} as the sets of conditions involve quantities obtained from the checkerboard graphs of the diagram that have not yet been defined. The related question, originally asked in \cite{Adams:Almost}, of whether there is an almost alternating diagram $D$ of a link $L$ that has the fewest number of crossings among all diagrams of the link remains open.

This paper is organized as follows. In Section \ref{section:Background}, we recall the construction of the Jones polynomial via the Kauffman bracket and state results on the Jones polynomial of an alternating link. In Section \ref{section:Formula}, we prove Theorem \ref{theorem:AAJones} giving a formula for the potential extreme coefficients of the Jones polynomial of an almost alternating link. We also prove Theorems \ref{theorem:Sign} and \ref{theorem:Crossing}. In Section \ref{section:JonesUnknot}, we prove Theorem \ref{theorem:AANontrivial}, showing that almost alternating and Turaev genus one links have nontrivial Jones polynomials.

\noindent{\bf Acknowledgement.} The authors thank Oliver Dasbach and John McCleary for their thoughts on a draft of this paper.

\section{The Jones polynomial of an alternating link}
\label{section:Background}

In this section, we recall the construction of the Jones polynomial via the Kauffman bracket. We also recall other results related to the Jones polynomial of an alternating link.

The {\em Kauffman bracket} of a link diagram $D$, denoted by $\langle D \rangle$, is a Laurent polynomial with integer coefficients in the formal variable $A$, i.e. $\langle D \rangle \in \mathbb{Z}[A,A^{-1}]$. It is defined recursively by the following three rules:
\begin{enumerate}
\item $\left\langle~
\tikz[baseline=.6ex, scale = .4]{
\draw (0,0) -- (1,1);
\draw (1,0) -- (.7,.3);
\draw (.3,.7) -- (0,1);
}
~\right\rangle = A \left\langle ~ \tikz[baseline=.6ex, scale = .4]{
\draw[rounded corners = 1mm] (0,0) -- (.45,.5) -- (0,1);
\draw[rounded corners = 1mm] (1,0) -- (.55,.5) -- (1,1);
}~\right\rangle + A^{-1} \left\langle~ \tikz[baseline=.6ex, scale = .4]{
\draw[rounded corners = 1mm] (0,0) -- (.5,.45) -- (1,0);
\draw[rounded corners = 1mm] (0,1) -- (.5,.55) -- (1,1);
}~\right\rangle,$
\item $\left\langle~D\sqcup \bigcirc ~\right\rangle = (-A^2 - A^{-2})\left\langle D \right\rangle,$
\item $\left\langle ~ \bigcirc ~\right\rangle = 1.$
\end{enumerate}
In rule (1), the diagram is only changed within a small neighborhood of the pictured crossing. The first term in the sum is the $A$-resolution of the crossing, and the second term is the $B$-resolution of the crossing; see Figure \ref{figure:Resolution}. Rule (2) gives the method for removing a closed component of the diagram without crossings. Finally, rule (3) sets the value of the Kauffman bracket on the unknot.

An alternate formulation of the Kauffman bracket is via the Kauffman state expansion of $D$. A {\em Kauffman state} is the collection of simple closed curves obtained by choosing either an $A$-resolution or a $B$-resolution at each crossing. When performing a resolution, we record where the crossing was with a small line segment called the {\em trace} of the crossing; again see Figure \ref{figure:Resolution}. The trace of a crossing is not considered part of the Kauffman state. For each Kauffman state $S$, define $a(S)$ and $b(S)$ to be the number of $A$-resolutions and the number of $B$-resolutions in $S$ respectively. Define $|S|$ to be the number of components in the Kauffman state $S$. The Kauffman bracket can be expressed as the sum
\begin{equation}
\label{equation:StateSum}
\langle D \rangle = \sum_{S} A^{a(S)-b(S)}\left(-A^2 - A^{-2}\right)^{|S|-1}.
\end{equation}

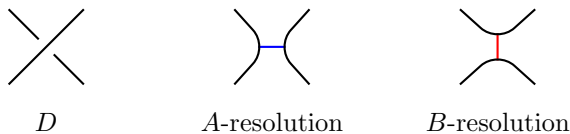
\begin{figure}[h]
$$\begin{tikzpicture}[thick]

\draw (0,0) -- (1,1);
\draw (0,1) -- (.4,.6);
\draw (1,0) -- (.6,.4);
\draw (.5,-.5) node{$D$};

\begin{scope}[xshift = 3cm]
\draw[blue] (.32,.5) -- (.68,.5);
\draw[rounded corners = 3mm] (0,0) -- (.45,.5) -- (0,1);
\draw[rounded corners = 3mm] (1,0) -- (.55,.5) -- (1,1);

\draw (.5,-.5) node{$A$-resolution};
\end{scope}

\begin{scope}[xshift = 6cm]
\draw[red]  (.5,.32) -- (.5,.68);
\draw[rounded corners = 3mm] (0,0) -- (.5,.45) -- (1,0);
\draw[rounded corners = 3mm] (0,1) -- (.5,.55) -- (1,1);

\draw (.5,-.5) node{$B$-resolution};
\end{scope}

\end{tikzpicture}$$
\caption{A crossing in a link diagram $D$ together with its $A$-resolution and $B$-resolution. The trace of the $A$-resolution is the blue line segment, and the trace of the $B$-resolution is the red line segment.}
\label{figure:Resolution}
\end{figure}

Each crossing in an oriented link diagram is either positive $\left(~\tikz[baseline=.6ex, scale = .4]{
\draw[->] (0,0) -- (1,1);
\draw (1,0) -- (.7,.3);
\draw[->] (.3,.7) -- (0,1);
}~\right)$ or negative $\left(~\tikz[baseline=.6ex, scale = .4]{
\draw[->] (.7,.7) -- (1,1);
\draw[->] (1,0) -- (0,1);
\draw (0,0) -- (.3,.3);
}~\right)$. The {\em writhe} $w(D)$ of an oriented link diagram is the difference between the number of positive crossings and the number of negative crossings in $D$. The {\em Jones polynomial} of an oriented link $L$ with diagram $D$ is defined as
$$V_L(t) = \left.(-A^3)^{-w(D)}\langle D \rangle\right|_{A=t^{-1/4}}.$$

Every link diagram $D$ has two dual {\em checkerboard graphs} $G$ and $\overline{G}$. Shade the complementary regions of the link diagram in a checkerboard fashion, i.e. at each crossing the shading should look like $\tikz[baseline=.6ex, scale = .4, thick]{
\fill[black!20!white] (0,0) -- (.5,.5) -- (0,1);
\fill[black!20!white] (1,1) -- (.5,.5) -- (1,0);
\draw (0,0) -- (1,1);
\draw (0,1) -- (.3,.7);
\draw (1,0) -- (.7,.3);
}$ or $\tikz[baseline=.6ex, scale = .4, thick]{
\fill[black!20!white] (0,0) -- (.5,.5) -- (1,0);
\fill[black!20!white] (1,1) -- (.5,.5) -- (0,1);
\draw (0,0) -- (1,1);
\draw (0,1) -- (.3,.7);
\draw (1,0) -- (.7,.3);
}$. The vertices of $G$ are in one-to-one correspondence with the shaded regions of the diagram $D$, and the vertices of $\overline{G}$ are in one-to-one correspondence with the unshaded regions of $D$ (or vice versa). The edges of $G$ are in one-to-one correspondence with the crossings of $D$, and likewise, the edges of $\overline{G}$ are in one-to-one correspondence with the crossings of $D$. An edge $e$ in $G$ (or $\overline{G}$) is incident to vertices $v_1$ and $v_2$ in $G$ (or $\overline{G}$) if the regions associated with $v_1$ and $v_2$ in $D$ meet at the crossing associated to $e$. The graphs $G$ and $\overline{G}$ are planar duals of one another. For an example of the checkerboard graphs of a link diagram see Figure \ref{figure:AAExample}.

An edge in $G$ or $\overline{G}$ is labeled as  $A$-edge or a $B$-edge according to the convention of Figure \ref{figure:CheckEdge}. In an alternating diagram, every edge in $G$ is an $A$-edge and every edge in $\overline{G}$ is a $B$-edge, or vice versa. A link diagram is {\em reduced} if neither of its checkerboard graphs contain any loops.
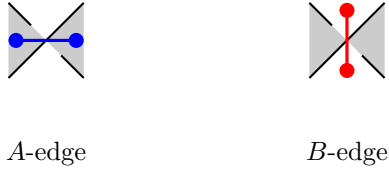
\begin{figure}[h]
$$\begin{tikzpicture}[thick]
\fill[black!20!white] (0,0) -- (.5,.5) -- (0,1);
\fill[black!20!white] (1,1) -- (.5,.5) -- (1,0);
\draw (0,0) -- (1,1);
\draw (0,1) -- (.3,.7);
\draw (1,0) -- (.7,.3);

\draw[very thick, blue] (.1,.5) -- (.9,.5);
\fill[blue] (.1,.5) circle (.1cm);
\fill[blue] (.9,.5) circle (.1cm);
\draw (.5,-1) node{$A$-edge};

\begin{scope}[xshift=4cm]
\fill[black!20!white] (0,0) -- (.5,.5) -- (0,1);
\fill[black!20!white] (1,1) -- (.5,.5) -- (1,0);
\draw (0,0) -- (1,1);
\draw (0,1) -- (.3,.7);
\draw (1,0) -- (.7,.3);

\draw[very thick, red] (.5,.1) -- (.5,.9);
\fill[red] (.5,.1) circle (.1cm);
\fill[red] (.5,.9) circle (.1cm);
\draw (.5,-1) node{$B$-edge};
\end{scope}

\end{tikzpicture}$$
\caption{An $A$-edge and a $B$-edge in the checkerboard graphs.}
\label{figure:CheckEdge}
\end{figure} 

The simplification $G'$ of a graph $G$ is the graph obtained by deleting all loops in $G$ by replacing each set of multiple edges with a single edge. When drawing a graph $G$ or $\overline{G}$ with no loops, our convention is to draw the simplifications $G'$ and $\overline{G}'$. If an edge $e$ in $G'$ or $\overline{G}'$ is incident to vertices $v_1$ and $v_2$, then we label it with the number of edges in $G$ or $\overline{G}$ respectively that are incident to $v_1$ and $v_2$.

 Let $D$ be a reduced alternating diagram. Let $G$ be its checkerboard graph with only $A$-edges, and let $G'$ be the simplification of $G$. Define $v$ and $e$ to be the number of vertices and edges respectively of $G'$. Define $\mu$ to be the number of edges in $G'$ that correspond to multiple edges in $G$, and define $\tau$ to be the number of triangles (or $3$-cycles) in $G'$. Similarly define $\overline{v}$, $\overline{e}$, $\overline{\mu}$, and $\overline{\tau}$ for the checkerboard graph $\overline{G}$ containing only $B$-edges and its simplification $\overline{G}'$. Dasbach and Lin \cite{DL:Volumish} gave formulas for the first three and last three terms in the Kauffman bracket of $D$.
\begin{theorem}[Dasbach, Lin]
\label{theorem:DasLin}
The Kauffman bracket of a reduced alternating diagram $D$ with $c$ crossings is given by
$$\langle D \rangle =  \sum_{i=0}^c \gamma_i A^{c+2v-2-4i}$$
where
\begin{align*}
\gamma_0 = & \; (-1)^{v-1},\\ 
\gamma_1 = & \; (-1)^{v-2}(e-v+1),\\
\gamma_2 = & \; (-1)^{v-3}\left(\binom{v-1}{2} - e(v-2) + \mu + \binom{e}{2} - \tau\right),\\
\gamma_{c-2} = & \; (-1)^{\overline{v}-3}\left(\binom{\overline{v}-1}{2} - \overline{e}(\overline{v}-2) + \overline{\mu} + \binom{\overline{e}}{2} - \overline{\tau}\right),\\
\gamma_{c-1} = & \; (-1)^{\overline{v}-2}(\overline{e}-\overline{v}+1),~\text{and}\\
\gamma_{c} = & \; (-1)^{\overline{v}-1}.
\end{align*}
\end{theorem}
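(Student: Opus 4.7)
My plan is to expand the state sum \eqref{equation:StateSum} and collect coefficients according to their power of $A$. Each Kauffman state is determined by a subset $E'\subseteq E(G)$ of crossings resolved with the $B$-smoothing; then $a(S)-b(S)=c-2|E'|$, and the number of state circles satisfies
$$|S(E')| \;=\; c(E') + n(E') \;=\; v - |E'| + 2\,n(E'),$$
where $c(E')$ is the number of connected components and $n(E')$ is the nullity of the spanning subgraph $(V(G),E')$. This identity is proved by induction on $|E'|$: adding one edge either merges two state circles (so $c$ drops by one) or splits one into two (so $n$ grows by one), and in each case $|S|$ changes in the predicted way. Expanding $(-A^2-A^{-2})^{|S|-1}$ by the binomial theorem, the coefficient of $A^{c+2v-2-4i}$ becomes
$$\gamma_i \;=\; \sum_{E'\subseteq E(G)} (-1)^{|S(E')|-1}\binom{|S(E')|-1}{\,i-r(E')\,},$$
where $r(E')=|E'|-n(E')$ is the rank. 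The binomial vanishes when $r(E')>i$, so each $\gamma_i$ is a finite sum over subsets of rank at most $i$.

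For $\gamma_0$ only $E'=\emptyset$ has rank $0$, giving $(-1)^{v-1}$. For $\gamma_1$ every rank-$1$ subset lies entirely inside a single bundle of parallel edges of $G$; for a bundle of multiplicity $m$ the identity $\sum_{k\geq 1}(-1)^k\binom{m}{k}=-1$ collapses the contribution to $-1$. Summing over the $e$ bundles of $G'$ and adding the $j=1$ term of $E'=\emptyset$ produces $(-1)^{v-2}(e-v+1)$.

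For $\gamma_2$ I would classify rank-$2$ subsets by the shape of their support in $G'$: (a) two bundles on disjoint vertex pairs, (b) two bundles sharing a common vertex, or (c) three bundles forming a triangle. Types (a) and (b) together exhaust all pairs of distinct edges of $G'$ and each pair contributes $(-1)(-1)=1$ to the alternating sum, totalling $\binom{e}{2}$; each triangle in $G'$ contributes $(-1)^{3}=-1$, totalling $-\tau$. The rank-$1$ contribution to $\gamma_2$ uses the weighted identity $\sum_{k\geq 1}k(-1)^{k}\binom{m}{k}$, which equals $-1$ when $m=1$ and $0$ when $m\geq 2$; together with the $(v-3)$ factor from the binomial $\binom{|S|-1}{1}$ this produces $\mu-(v-2)e$. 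Assembling these three pieces with the rank-$0$ contribution $(-1)^{v-1}\binom{v-1}{2}$ recovers Dasbach and Lin's formula for $\gamma_2$. The formulas for $\gamma_{c-2},\gamma_{c-1},\gamma_{c}$ follow from the symmetric argument applied to $\overline{G}$ after the substitution $A\leftrightarrow A^{-1}$, which interchanges the top and bottom of the Kauffman bracket.

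The main obstacle is the bookkeeping in the $\gamma_2$ case: three geometric configurations of rank-$2$ subsets contribute with different signs and counts, and one must invoke both the plain and the $k$-weighted alternating binomial identities simultaneously in order to collapse sums over multi-edges of $G$ into clean expressions in the simplified parameters $v$, $e$, $\mu$, and $\tau$.
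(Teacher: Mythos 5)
The paper does not prove this theorem; it is quoted verbatim from Dasbach--Lin \cite{DL:Volumish}, so there is no internal proof to compare against. Your reconstruction is correct and follows essentially the same route as the original source: index states by the subset $E'\subseteq E(G)$ of $B$-smoothed crossings, use $|S(E')|=c(E')+n(E')$ (valid because for an alternating diagram the state $S(E')$ is the boundary of a regular neighborhood of the spanning subgraph $(V(G),E')$), and collect the coefficient of $A^{c+2v-2-4i}$ as a sum over subsets of rank at most $i$. I verified the three bookkeeping computations: the rank-$1$ contribution to $\gamma_1$ collapses to $(-1)^v e$ via $\sum_{k\ge1}(-1)^k\binom{m}{k}=-1$; the rank-$1$ contribution to $\gamma_2$ gives $(-1)^{v-1}\left(\mu-e(v-2)\right)$ using the weighted identity exactly as you state (the $m=1$ bundles, of which there are $e-\mu$, are what produce the $\mu$ after combining with the $-(v-3)e$ term); and the rank-$2$ classification into disjoint pairs, incident pairs, and triangles is exhaustive for a loopless simple $G'$, yielding $\binom{e}{2}-\tau$. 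The one step you gloss is the induction establishing $|S(E')|=c(E')+n(E')$: asserting that the edge merges two circles precisely when its endpoints lie in distinct components of $(V(G),E')$ is the actual content to be checked, and it follows from the local checkerboard picture at a crossing (or, more cleanly, from the regular-neighborhood description above). With that point made explicit, the argument is complete.
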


The coefficients $\gamma_0$ and $\gamma_c$ in the above theorem were computed by Kauffman \cite{Kauffman:StateModels}. Dasbach and Lin \cite{DL:Colored} and Stoimenow \cite{Stoimenow:Semi} later extended this theorem to semi-adequate links. In Section \ref{section:Formula}, we use Theorem \ref{theorem:DasLin} to give formulas for some of the coefficients of the Jones polynomial of an almost alternating link.

\section{Jones polynomial formulas}
\label{section:Formula}

In this section, we give formulas for the first two and last two potential coefficients of the Jones polynomial of an almost alternating link. We recall the relationship between almost alternating links and links of Turaev genus one, and we prove that either the first two or last two coefficients of the Jones polynomial of such links alternate in sign.

Let $D$ be an almost alternating diagram as in Figure \ref{figure:AlmostAlternating}. The tangle $R$ is an alternating tangle; when a strand meeting $R$ is labeled either ``$+$" or ``$-$", it indicates that the strand passes over or under respectively another strand in the first crossing involving that strand inside $R$. The diagram obtained by changing the depicted crossing (known as the {\em dealternator}) is alternating. Throughout this section, $G$ will be the checkerboard graph of $D$ containing the vertices $u_1$ and $u_2$, while $\overline{G}$ will be the checkerboard graph of $D$ containing the vertices $v_1$ and $v_2$. Every edge in $G$ except for the edge associated to the dealternator is an $A$-edge, and every edge in $\overline{G}$ except for the edge associated to the dealternator is a $B$-edge. The edges in $G$ and $\overline{G}$ associated to the dealternator will be depicted by a dashed edge.

\begin{figure}[h]
$$\begin{tikzpicture}
\draw[thick, rounded corners = 2mm] (4,0) -- (3.3,.7) -- (1.7,.7) -- (.3,-.7) -- (.3,-1.5) -- (5,-1.5) -- (5,-1) -- (4,0);
\draw[thick, rounded corners = 2mm] (4,0) -- (5,1) -- (5,1.5) -- (.3,1.5) -- (.3, .7) -- (.8,.2);
\draw[thick, rounded corners = 2mm] (1.2,-.2) -- (1.7,-.7) -- (3.3,-.7) -- (4,0);

\fill[white] (4,0) circle (1cm);
\draw[thick] (4,0) circle (1cm);
\draw (4,0) node{$R$};
\draw (3.45,.55) node{$+$};
\draw (4.55,.55) node{$-$};
\draw (4.55,-.55) node{$+$};
\draw (3.45,-.55)node{$-$};

\draw (2.5,1.1) node{\small{$u_1$}};
\draw (2.5,-1.1) node{\small{$u_2$}};
\draw (2.5,0) node{\small{$v_1$}};
\draw (5.5,0) node{\small{$v_2$}};
\end{tikzpicture}$$
\caption{A generic almost alternating diagram.}
\label{figure:AlmostAlternating}
\end{figure}
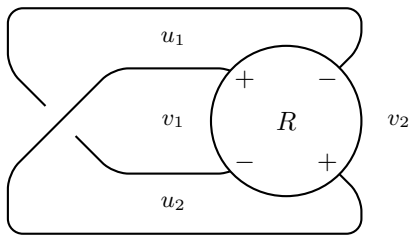

Our goal is to find an expression for the first and last two coefficients of the Jones polynomial of an almost alternating link. Figure \ref{figure:SecretlyAlternating} shows that if $u_1$ and $u_2$ are the same vertex or if $u_1$ and $u_2$ are incident to an edge not associated to the dealternator, then $D$ is an almost alternating diagram of an alternating link. By a symmetric argument, if $v_1$ and $v_2$ are the same or if $v_1$ and $v_2$ are incident to an edge not associated to the dealternator, then $D$ is an almost alternating diagram of an alternating link. Thus we assume that $u_1$ and $u_2$ are distinct, $v_1$ and $v_2$ are distinct, the only edge in $G$ incident to $u_1$ and $u_2$ is associated to the dealternator, and the only edge in $\overline{G}$ incident to $v_1$ and $v_2$ is associated to the dealternator. If an almost alternating diagram satisfies these conditions and if $G$ and $\overline{G}$ do not contain any loops, then we call the diagram $D$ a {\em strongly reduced} almost alternating diagram.
\begin{figure}[h]
$$\begin{tikzpicture}[thick, scale = .8]


	\draw[rounded corners = 2mm] (2,.6) -- (2,.8) -- (1.2,.8) -- (1.2,.3) -- (.6,-.3) -- (.6,-1.2) -- (4,-1.2) -- (4,-.6);
	\draw[rounded corners = 2mm] (2,-.6) -- (2,-.8) -- (1.2,-.8) -- (1.2,-.3) -- (1,-.1);
	\draw[rounded corners = 2mm] (.8,.1) -- (.6,.3) -- (.6,1.2) -- (4,1.2) -- (4,.6);
	
	\fill[white] (2,0) circle (.6cm);
	\draw (2,0) circle (.6cm);
	\draw (2,0) node{$R_1$};
	\draw (2,.4) node{$+$};
	\draw (2,-.4) node{$-$};
	
	\fill[white](4,0) circle (.6cm);
	\draw (4,0) circle (.6cm);
	\draw (4,0) node{$R_2$};
	\draw (4,.4) node{$-$};
	\draw (4,-.4) node{$+$};
	
	\draw[ultra thick, ->] (5,0) -- (6,0);
	

\begin{scope}[xshift = 5cm]

	\draw [rounded corners = 6mm] (2,.6) -- (2,1.2) -- (4,1.2) -- (4,.6);
	\draw [rounded corners = 6mm] (2,-.6) -- (2,-1.2) -- (4,-1.2) -- (4,-.6);

	\fill[white] (2,0) circle (.6cm);
	\draw (2,0) circle (.6cm);
	\draw (2,0) node{\raisebox{\depth}{\scalebox{1}[-1]{$R_1$}}};
	\draw (2,.4) node{$+$};
	\draw (2,-.4) node{$-$};
	
	\fill[white](4,0) circle (.6cm);
	\draw (4,0) circle (.6cm);
	\draw (4,0) node{$R_2$};
	\draw (4,.4) node{$-$};
	\draw (4,-.4) node{$+$};

\end{scope}


\begin{scope}[yshift = -4cm, xshift = -2.5 cm]

	\draw[rounded corners = 1mm] (2,0) -- (1.4,.6)-- (1.2,.3) -- (.6,-.3) -- (.6,-1.2) -- (4.6,-1.2) -- (4.6,-.6) -- (4,0);
	\draw[rounded corners = 1mm] (2,0) -- (1.4,-.6) -- (1.2,-.3) -- (1,-.1);
	\draw[rounded corners = 1mm] (.8,.1) -- (.6,.3) -- (.6,1.2) -- (4.6,1.2) -- (4.6,.6) -- (4,0);
	\draw[rounded corners = 1mm] (2,0) -- (2.5,.6) -- (2.8,.2) -- (3.2,-.2)  -- (3.5,-.6) -- (4,0);
	\draw[rounded corners = 2mm] (2,0) -- (2.5,-.6) -- (2.9,-.1);
	\draw[rounded corners = 1mm] (3.1,.1) -- (3.5,.6) -- (4,0);
	
	\fill[white] (2,0) circle (.6cm);
	\draw (2,0) circle (.6cm);
	\draw (2,0) node{$R_1$};
	\draw (1.75,.35) node{$+$};
	\draw (1.75,-.35) node{$-$};
	\draw (2.25,.35) node{$-$};
	\draw (2.25,-.35) node{$+$};
	
	\fill[white](4,0) circle (.6cm);
	\draw (4,0) circle (.6cm);
	\draw (4,0) node{$R_2$};
	\draw (3.75,.35) node{$+$};
	\draw (3.75,-.35) node{$-$};
	\draw (4.25,.35) node{$-$};
	\draw (4.25,-.35) node{$+$};

\end{scope}

\draw[ultra thick, ->] (2.3,-4) -- (3.3,-4);
\draw[ultra thick, ->] (8,-4) -- (9,-4);


\begin{scope}[yshift = -4cm, xshift = 3cm]

	\draw[rounded corners = 5mm] (2.5,0) -- (3.25,.8) -- (4,0);
	\draw[rounded corners = 5mm] (2.5,0) -- (3.25, -.8) -- (4,0);
	
	\draw[rounded corners = 1mm] (2.5,0) --(2.1,-.6) -- (1.8,-.3) -- (1.2,.3) -- (.6,-.3) -- (.6,-1.2) -- (4.6,-1.2) -- (4.6,-.6) -- (4,0);
	\draw[rounded corners = 1mm] (.8,.1) -- (.6,.3) -- (.6,1.2) -- (4.6,1.2) -- (4.6,.6) -- (4,0);
	\draw[rounded corners = 1mm] (1,-.1) -- (1.2,-.3) -- (1.4,-.1);
	\draw[rounded corners = 1mm] (2.5,0) --(2.1,.6) -- (1.8,.3) -- (1.5,.1);

	\fill[white] (2.5,0) circle (.6cm);
	\draw (2.5,0) circle (.6cm);
	\draw (2.5,0) node{\raisebox{\depth}{\scalebox{1}[-1]{$R_1$}}};
	\draw (2.75,.35) node{$-$};
	\draw (2.75,-.35) node{$+$};
	\draw (2.25,.35) node{$+$};
	\draw (2.25,-.35) node{$-$};
	
	\fill[white](4,0) circle (.6cm);
	\draw (4,0) circle (.6cm);
	\draw (4,0) node{$R_2$};
	\draw (3.75,.35) node{$+$};
	\draw (3.75,-.35) node{$-$};
	\draw (4.25,.35) node{$-$};
	\draw (4.25,-.35) node{$+$};

\end{scope}


\begin{scope}[yshift = -4cm, xshift = 7.5cm]

	\draw[rounded corners = 5mm] (2.5,0) -- (3.25,.8) -- (4,0);
	\draw[rounded corners = 5mm] (2.5,0) -- (3.25, -.8) -- (4,0);
	\draw[rounded corners = 3mm] (2.5,0) -- (2,.5) -- (2,1.2) -- (4.5,1.2) -- (4.5,.5) -- (4,0);
	\draw[rounded corners = 3mm] (2.5,0) -- (2,-.5) -- (2,-1.2) -- (4.5,-1.2) -- (4.5,-.5) -- (4,0);

	\fill[white] (2.5,0) circle (.6cm);
	\draw (2.5,0) circle (.6cm);
	\draw (2.5,0) node{\raisebox{\depth}{\scalebox{1}[-1]{$R_1$}}};
	\draw (2.75,.35) node{$-$};
	\draw (2.75,-.35) node{$+$};
	\draw (2.25,.35) node{$+$};
	\draw (2.25,-.35) node{$-$};
	
	\fill[white](4,0) circle (.6cm);
	\draw (4,0) circle (.6cm);
	\draw (4,0) node{$R_2$};
	\draw (3.75,.35) node{$+$};
	\draw (3.75,-.35) node{$-$};
	\draw (4.25,.35) node{$-$};
	\draw (4.25,-.35) node{$+$};

\end{scope}

\end{tikzpicture}$$
\caption{ \textbf{Top:} If $u_1=u_2$, then a Reidemeister 1 move transforms $D$ into an alternating diagram. \textbf{ Bottom:} If $u_1$ and $u_2$ are incident to an edge not associated to the dealternator, then a flype and a Reidemeister 2 move transform $D$ into an alternating diagram.}
\label{figure:SecretlyAlternating}
\end{figure}
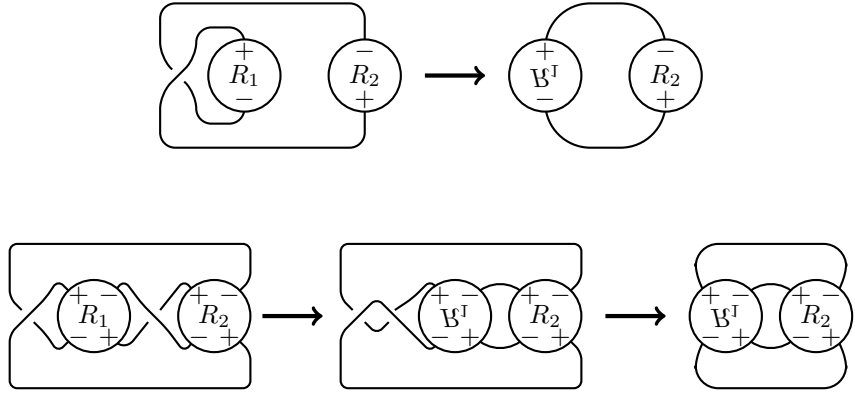

The formulas for the first two and last two coefficients of the Jones polynomial of an almost alternating link make heavy use of the checkerboard graphs $G$ and $\overline{G}$ of $D$. Define $G'$ and $\overline{G}'$ to be the simplifications of $G$ and $\overline{G}$ respectively. Denote the number of vertices and edges in $G'$ and $\overline{G}'$ by $v$, $\overline{v}$, $e$, and $\overline{e}$ respectively. The {\em circuit rank} of a graph is the number of edges not contained in a maximal spanning forest of the graph, or alternatively, it is the first Betti number of the graph when thought of as a cellular complex. Let $\beta_1$ and $\overline{\beta}_1$ be the circuit ranks of $G'$ and $\overline{G}'$ respectively. Since $G'$ and $\overline{G}'$ are connected, we have
$$\beta_1 = e - v + 1~\text{and}~\overline{\beta}_1 = \overline{e}-\overline{v}+1.$$

Let $P$ denote the number of paths of length two between $u_1$ and $u_2$ in $G'$, and let $\overline{P}$ denote the number of paths of length two between $v_1$ and $v_2$ in $\overline{G}'$. For $i=0,1$, and $2$, define $P_i$ to be the number of paths of length two between $u_1$ and $u_2$ such that $i$ of the edges in the path came from multiple edges in $G$. Similarly define $\overline{P}_i$ for $i=0,1$, and $2$. Let $Q$ be the number of paths of length three between $u_1$ and $u_2$ in $G'$ such that no interior vertex of the path is adjacent to both $u_1$ and $u_2$. Similarly, define $\overline{Q}$ to be the number of paths of length three between $v_1$ and $v_2$ in $\overline{G}'$ such that no interior vertex of the path is adjacent to both $v_1$ and $v_2$. Finally, define $S$ to be the number of subgraphs of $G'$ containing $u_1$ and $u_2$ that are isomorphic to the complete graph $K_4$ on four vertices, and similarly define $\overline{S}$ to be the number of subgraphs of $\overline{G}'$ containing $v_1$ and $v_2$ that are isomorphic to $K_4$. See Figure \ref{figure:Square} for depictions of $S$ and $\overline{S}$.
\begin{figure}[h]
$$\begin{tikzpicture}
\begin{scope}[xshift = 5cm]
\draw (-1,1.1) node[left]{\small{$v_1$}};
\draw (1,1.1) node[right]{\small{$v_2$}};
\draw (0,-1) node[below]{$\overline{S}$};

\fill (0,0) circle (.1cm);
\fill (1,1) circle (.1cm);
\fill (-1,1) circle (.1cm);
\fill (0,2) circle (.1cm);
\draw (0,0) -- (1,1) -- (0,2) -- (-1,1) -- (0,0);
\draw (0,0) -- (0,2);
\draw[dashed] (-1,1) arc (90:270:.75cm);
\draw[dashed] (1,1) arc (90:-90:.75cm);
\draw[dashed] (-1,-.5) -- (1,-.5);

\end{scope}

\draw (0.2,0) node[below]{\small{$u_2$}};
\draw (0.2,2) node[above]{\small{$u_1$}};
\draw (0,-1) node[below]{$S$};

\fill (0,0) circle (.1cm);
\fill (1,1) circle (.1cm);
\fill (-1,1) circle (.1cm);
\fill (0,2) circle (.1cm);
\draw (0,0) -- (1,1) -- (0,2) -- (-1,1) -- (0,0);
\draw (-1,1) -- (1,1);
\draw[dashed] (0,2) arc (0:180:.75cm);
\draw[dashed]  (0,0) arc (0:-180:.75cm);
\draw[dashed]  (-1.5,0) -- (-1.5,2);

\end{tikzpicture}$$
\caption{$S$ and $\overline{S}$ are the counts of $K_4$ subgraphs of $G'$ and $\overline{G}'$ respectively. The dashed edge indicates the edge that is associated to the dealternator.}
\label{figure:Square}
\end{figure}
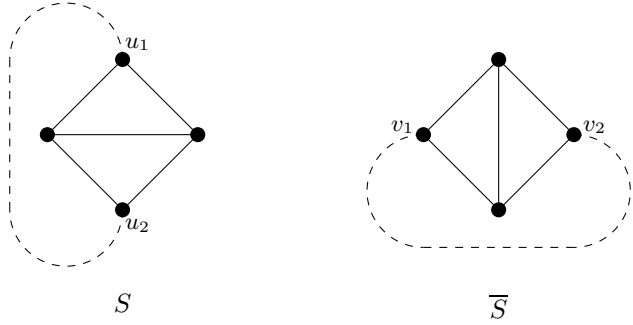

The first and last two potential coefficients of the Jones polynomial of an almost alternating link can be expressed using the above notation. The first and last coefficients were computed in \cite{DasLow:TuraevJones}. The fact that the span of the Jones polynomial of an almost alternating link is at most $c-3$ was proved in \cite{Adams:Almost}. The main contribution of this theorem are the formulas for the second and penultimate coefficients $\alpha_1$ and $\alpha_{c-4}$.
\begin{theorem}
\label{theorem:AAJones}
Let $L$ be a link with strongly reduced almost alternating diagram $D$ with $c$ crossings as in Figure \ref{figure:AlmostAlternating}. The Kauffman bracket of $D$ can be expressed as
$$\langle D \rangle = \sum_{i=0}^{c-3} \alpha_i A^{c+2v-8-4i}$$
where
\begin{align*}
\alpha_0 = &\; (-1)^v(P-1),\\
\alpha_1 = & \; (-1)^{v-1}\left(\beta_1(P-1) - \binom{P}{2} + P_2- P_0 + Q - S\right),\\
\alpha_{c-4} = &  \; (-1)^{\overline{v}-1}\left(\overline{\beta_1}(\overline{P}-1)-\binom{\overline{P}}{2} + \overline{P}_2-\overline{P}_0+\overline{Q}-\overline{S}\right),~\text{and}\\
\alpha_{c-3} = & \; (-1)^{\overline{v}}(\overline{P}-1).\\
\end{align*}
\end{theorem}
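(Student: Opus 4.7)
My plan is to apply the Kauffman bracket skein relation at the dealternator, so
\[
\langle D \rangle = A\langle D_A \rangle + A^{-1}\langle D_B \rangle,
\]
where $D_A,D_B$ are obtained by $A$- and $B$-resolving the dealternator. The central structural fact is that both $D_A$ and $D_B$ are reduced alternating diagrams with $c-1$ crossings: they are also the $B$- and $A$-resolutions of the dealternator in the alternating diagram obtained by changing the dealternator crossing, so both are alternating, and the strongly reduced hypotheses on $D$ are exactly what prevent loops from appearing after contraction. Since the dealternator $e_d$ is a $B$-edge in $G$ (equivalently an $A$-edge in $\overline G$), its $A$-resolution contracts $e_d$ in $G$ and deletes it in $\overline G$; thus $D_A$ has checkerboard graphs $G/e_d$ and $\overline G\setminus e_d$, while $D_B$ has $G\setminus e_d$ and $\overline G/e_d$, and Theorem \ref{theorem:DasLin} applies to both.

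Matching powers in the skein relation yields $\alpha_j=\gamma_{j+1}^{(A)}+\gamma_{j+1}^{(B)}$. The leading pair $\gamma_0^{(A)}=(-1)^{v-2}$ and $\gamma_0^{(B)}=(-1)^{v-1}$ cancel, confirming the claimed leading exponent $c+2v-8$. To evaluate $\alpha_0$ and $\alpha_1$ I translate the data $v,e,\mu,\tau$ of $(G/e_d)'$ and $(G\setminus e_d)'$ back to invariants of $G'$. Deletion is clean: $(G\setminus e_d)'$ has $v$ vertices, $e-1$ edges, the same $\mu$, and $\tau-P$ triangles, since the triangles of $G'$ through $e_d$ are precisely the $P$ length-two paths from $u_1$ to $u_2$. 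Contraction brings in the refined invariants: the $P$ length-two paths create $P$ pairs of parallel edges that collapse in the simplification, giving $v-1$ vertices and $e-1-P$ edges; tracking how the multiplicities of $u_1 w$ and $u_2 w$ combine under contraction yields a multi-edge count differing from $\mu$ by $P_0-P_2$; and triangles through the merged vertex come from triangles of $G'$ through $u_1$ or $u_2$ and from length-three $u_1$-$u_2$ paths, with inclusion-exclusion using the $K_4$-count $S$ to handle interior vertices adjacent to both $u_1$ and $u_2$---precisely what $Q$ and $S$ are designed to encode. Substituting into Theorem \ref{theorem:DasLin}, $\alpha_0$ telescopes in one line to $(-1)^v(P-1)$, and a longer but mechanical simplification of $\gamma_2^{(A)}+\gamma_2^{(B)}$ yields the stated $\alpha_1$.

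The formulas for $\alpha_{c-4}$ and $\alpha_{c-3}$ follow from the dual argument applied to the tail coefficients $\gamma_{c-3}^{(\star)},\gamma_{c-2}^{(\star)},\gamma_{c-1}^{(\star)}$ of Theorem \ref{theorem:DasLin}, with the roles of $G$ and $\overline G$ (and the invariants $P_i,Q,S$ versus $\overline P_i,\overline Q,\overline S$) interchanged; the analogous cancellation $\gamma_{c-1}^{(A)}+\gamma_{c-1}^{(B)}=0$ at $A^{-3c+2v}$ secures the minimum exponent $-3c+2v+4$ of $\langle D\rangle$. The main technical obstacle is the triangle count in $(G/e_d)'$ (and dually in $(\overline G/e_d)'$) needed for $\alpha_1$ and $\alpha_{c-4}$: triangles through the merged vertex do not correspond to a single subgraph type of $G'$, and the inclusion-exclusion required to express them cleanly in terms of $P_i$, $Q$, and $S$---and to verify that the sum $\gamma_2^{(A)}+\gamma_2^{(B)}$ collapses to the stated closed form with no residual $P_1$ or other extraneous terms---is where the bulk of the bookkeeping lies.
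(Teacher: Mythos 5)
Your proposal is correct and follows essentially the same route as the paper's proof: resolve the dealternator, apply Theorem \ref{theorem:DasLin} to the two reduced alternating resolutions, and track the changes in the simplified checkerboard graph data, with the key identities $e_B-e_A=P$, $\mu_B-\mu_A=P_2-P_0$, and $\tau_A-\tau_B=Q-S$ (your deletion/contraction phrasing is the paper's ``add an edge''/``identify $u_1$ and $u_2$'' in different clothing, and your cancellation of the $\gamma_{c-1}$ terms replaces the paper's Euler-characteristic check of the span). The remaining work you defer as ``mechanical'' is exactly the computation the paper carries out.
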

\begin{proof}
Let $D$ be an almost alternating diagram as in Figure \ref{figure:AlmostAlternating}, and let $D_A$ and $D_B$ be the $A$ and $B$ resolutions of $D$ at the dealternator. Then $D_A$ and $D_B$ are reduced alternating diagrams where $D_A$ is the denominator closure of $R$ and $D_B$ is the numerator closure of $R$. We prove the formulas for $\alpha_0$ and $\alpha_1$. The proofs $\alpha_{c-4}$ and $\alpha_{c-3}$ are obtained by considering the mirror image of $D$.

Let $c_A, v_A, e_A, \mu_A$, and $\tau_A$ be the terms in Theorem \ref{theorem:DasLin} associated to the first three coefficients of $\langle D_A \rangle$, and similarly let $c_B, v_B, e_B, \mu_B$, and $\tau_B$ be the terms in Theorem \ref{theorem:DasLin} associated to the first three coefficients of $\langle D_B \rangle$. Define $G_A$ and $G_A'$ to be the all-$A$ checkerboard graph of $D_A$ and its simplification, and similarly define $\widetilde{G_A}$ and $\widetilde{G_A}'$ to be the all-$A$ checkerboard graph of $D_B$ and its simplification. 

The graph $G_A$ is obtained from $\widetilde{G_A}$ by identifying the vertices $u_1$ and $u_2$ into a single vertex $u_{12}$. Also, the graph $G$ is obtained from $\widetilde{G_A}$ by adding an edge between the vertices $u_1$ and $u_2$. Therefore $v=v_B=v_A+1$ and $e=e_B+1$. 

The Kauffman bracket of $D$ is computed as 
$$\langle D \rangle = A \langle D_A \rangle + A^{-1} \langle D_B \rangle.$$
We use Theorem \ref{theorem:DasLin} to compute $\langle D_A\rangle$ and $\langle D_B \rangle$. The top degree term of $A \langle D_A \rangle$ is 
$$(-1)^{v_A-1}A^{c_A+2v_A-1} = (-1)^{v-2}A^{c+2v-4}.$$
The top degree term of $A^{-1}\langle D_B\rangle$ is
$$(-1)^{v_B-1}A^{c_B+2v_B-1} = (-1)^{v-1}A^{c+2v-4}.$$
These terms cancel in $\langle D \rangle$, and so the coefficient of $A^{c+2v-4}$ in $\langle D \rangle$ is zero.

The penultimate terms in $A \langle D_A\rangle$ and $A^{-1}\langle D_B \rangle$ are respectively given by
\begin{align*}
(-1)^{v_A-2}(e_A - v_A+1)A^{c_A+2v_A-5} = & \; (-1)^{v-3}(e_A-v+2)A^{c+2v-8}~\text{and}\\
(-1)^{v_B-2}(e_B-v_B+1)A^{c_B+2v_B-7} =& \; (-1)^{v-2}(e_B-v+1)A^{c+2v-8}.
\end{align*}
Recall that $P$ is the number of paths of length two in $G'$ between the vertices $u_1$ and $u_2$. Since a path of length two in $\widetilde{G_A}$ between $u_1$ and $u_2$ becomes a multiple edge in $G_A$, it follows that $e_B-e_A= P$. Thus, the coefficient $\alpha_0$ of $A^{c+2v-8}$ in $\langle D \rangle$ is 
$$\alpha_0=(-1)^{v-2}((e_B - v + 1) - (e_A - v + 2)) = (-1)^v (P-1).$$

The antepenultimate terms in $A\langle D_A \rangle$ and $A^{-1} \langle D_B \rangle$ are given by
\begin{align*}
& \; (-1)^{v_A-3}\left( \binom{v_A-1}{2} - e_A(v_A-2) + \binom{e_A}{2} + \mu_A - \tau_A\right)A^{c_A+2v_A-9}\\
 = & \; (-1)^{v-4}\left( \binom{v-2}{2} -e_A(v-3) + \binom{e_A}{2} + \mu_A - \tau_A\right)A^{c+2v-12}~\text{and}\\ 
 & \; (-1)^{v_B-3}\left( \binom{v_B-1}{2} - e_B(v_B-2) + \binom{e_B}{2} + \mu_B - \tau_B\right)A^{c_B+2v_B-11}\\
 = & \; (-1)^{v-3}\left( \binom{v-1}{2} - e_B(v-2) + \binom{e_B}{2} +\mu_B - \tau_B\right)A^{c+2v-12}.
\end{align*}
Therefore the coefficient of $A^{c+2v-12}$ in $\langle D \rangle$ is 
\begin{align*}
\alpha_1 =  (-1)^{v-1} & \left(  \binom{v-1}{2} - \binom{v-2}{2} +e_A(v-3) - e_B(v-2)\right. \\
& \left. \;+\binom{e_B}{2} - \binom{e_A}{2} +\mu_B - \mu_A + \tau_A - \tau_B\right).
\end{align*}
We handle the terms in $\alpha_1$ in pairs. A straightforward computation shows that
\begin{equation}
\label{equation:term1}
\binom{v-1}{2} - \binom{v-2}{2} = v-2.
\end{equation}
The second pair of terms yields
\begin{align}
\label{equation:term2}
\begin{split}
e_A(v-3) - e_B(v-2) = & \; (e_A-e_B)(v-2)-e_A\\
= & -P(v-2) - e_A.
\end{split}
\end{align}
The third pair of terms yields
\begin{align}
\label{equation:term3}
\begin{split}
\binom{e_B}{2} - \binom{e_A}{2} = & \; \frac{1}{2} (e_B^2 - e_B - e_A^2 + e_A)\\
= & \; \frac{1}{2}((e_B-e_A)(e_B+e_A)-(e_B-e_A))\\
= & \; \frac{1}{2} P(P+2e_A-1)\\
= & \; \frac{1}{2} P^2 - \frac{1}{2} P +Pe_A.
\end{split}
\end{align}

Recall that $\mu_A$ and $\mu_B$ count the number of edges in $G_A'$ and $\widetilde{G_A}'$ respectively that came from multiple edges in $G_A$ and $\widetilde{G_A}$, and recall that $G_A$ is obtained from $\widetilde{G_A}$ by identifying the vertices $u_1$ and $u_2$. Suppose that $e$ is an edge in $\widetilde{G_A}'$ that is not contained in a path of length two between $u_1$ and $u_2$. Then $e$ is also an edge in $\widetilde{G_A}'$ and $e$ came from a multiple edge in $G_A$ if and only if $e$ came from a multiple edge in $\widetilde{G_A}$. Now suppose that $e$ is an edge in $\widetilde{G_A}'$ that is contained in a path of length two between $u_1$ and $u_2$. If both edges in that path came from multiple edges in $\widetilde{G_A}$, then that path contributes two to $\mu_B$ and one to $\mu_A$. If exactly one edge in that path came from multiple edges in $\widetilde{G_A}$, then that path contributes one to $\mu_B$ and one to $\mu_A$. If no edges in the path came from multiple edges in $\widetilde{G_A}$, then the path contributes zero to $\mu_B$ and one to $\mu_A$. Hence
\begin{equation}
\label{equation:term4}
\mu_B - \mu_A = P_2 - P_0,
\end{equation}
where $P_i$ is the number of paths of length two in $G'$ between $u_1$ and $u_2$ such that $i$ edges in the path come from multiple edges in $G$.

To analyze the final pair of terms in $\alpha_1$ we must find the difference in the number of triangles in $G_A'$ and $\widetilde{G_A}'$. The triangles in $G_A'$ either come from a triangle in $\widetilde{G_A}'$ or from a path of length three between $u_1$ and $u_2$ in $\widetilde{G_A}'$. If $T$ is a triangle in $\widetilde{G_A}'$ not containing the vertices $u_1$ or $u_2$, then $T$ is also a triangle in $G_A'$. Suppose $T$ is a triangle in $\widetilde{G_A}'$ containing one of the vertices $u_1$ and $u_2$, say $u_1$. Let $u_3$ and $u_4$ be the other vertices in $T$. If neither $u_3$ nor $u_4$ are adjacent to $u_2$, then the triangle consisting of the vertices $u_{12}$, $u_3$, and $u_4$ in $G_A'$ comes from $T$ and $T$ alone. If exactly one of $u_3$ or $u_4$ is adjacent to $u_2$, then the triangle consisting of the vertices $u_{12}$, $u_3$, and $u_4$ in $G_A'$ comes from the triangle $T$ and the path of length three containing the vertices $u_1, u_2, u_3$, and $u_4$. If both $u_3$ and $u_4$ are adjacent to $u_2$, then there is a triangle $T'$ formed by the vertices $u_2, u_3$, and $u_4$ in $\widetilde{G_A}'$. The triangle in $G_A'$ consisting of the vertices $u_{12}, u_3$, and $u_4$ comes from $T$ and $T'$ in $\widetilde{G_A}'$. Finally, let $H$ be a path of length three between $u_1$ and $u_2$ in $\widetilde{G_A}'$ such that no interior vertex of $H$ is adjacent to both $u_1$ and $u_2$. After identifying $u_1$ and $u_2$ (and simplifying) to form $G_A'$, the path $H$ becomes a triangle. See Figure \ref{figure:Triangle}. Therefore \begin{equation}
\label{equation:term5}
\tau_A-\tau_B = Q - S,
\end{equation}
where $Q$ is the number of paths of length three between $u_1$ and $u_2$ in $G'$ such that no interior vertex of the path is adjacent to both $u_1$ and $u_2$ and $S$ is the number of $K_4$ subgraphs of $G'$ containing $u_1$ and $u_2$ (see Figure \ref{figure:Square}).

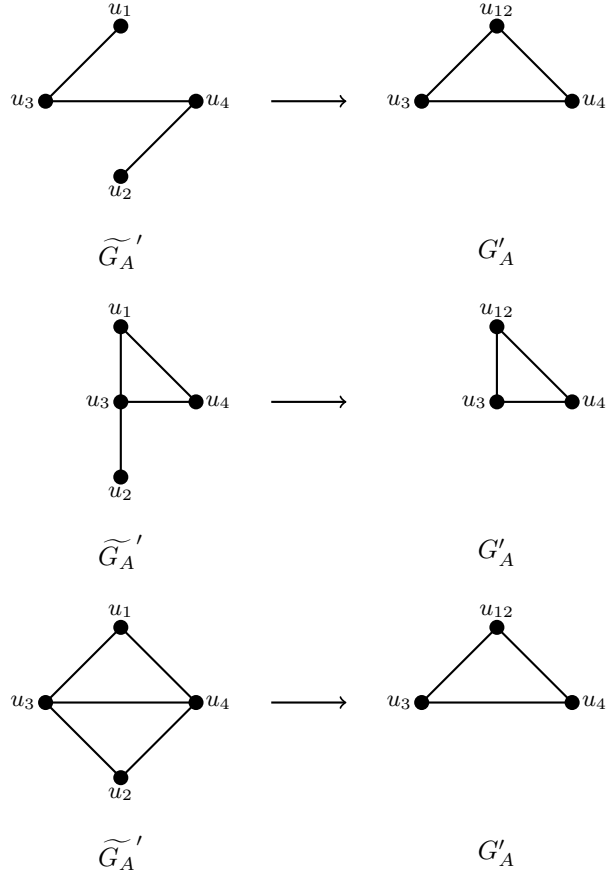
\begin{figure}[h]
$$\begin{tikzpicture}[thick]
\begin{scope}[yshift = 4cm]

	\fill (0,0) circle (.1cm);
	\fill (-1,1) circle (.1cm);
	\fill (1,1) circle (.1cm);
	\fill (0,2) circle (.1cm);
	\draw (0,2) -- (-1,1) -- (1,1) -- (0,0);
	\draw (0,2) node[above]{\small{$u_1$}};
	\draw (0,0) node[below]{\small{$u_2$}};
	\draw (-1,1) node[left]{\small{$u_3$}};
	\draw (1,1) node[right]{\small{$u_4$}};
	\draw (0,-1) node{$\widetilde{G_A}'$};
	
	\begin{scope}[xshift = 5cm]
	\fill (-1,1) circle (.1cm);
	\fill (1,1) circle (.1cm);
	\fill (0,2) circle (.1cm);
	\draw (-1,1) -- (0,2) -- (1,1) -- (-1,1);
	\draw (0,2) node[above]{\small{$u_{12}$}};
	\draw (-1,1) node[left]{\small{$u_3$}};
	\draw (1,1) node[right]{\small{$u_4$}};
	\draw (0,-1) node{$G_A'$};
	\end{scope}

	\draw[->] (2,1) -- (3,1);

\end{scope}


	\fill (0,0) circle (.1cm);
	\fill (0,1) circle (.1cm);
	\fill (1,1) circle (.1cm);
	\fill (0,2) circle (.1cm);
	\draw (0,0) -- (0,2) -- (1,1) -- (0,1);
	\draw (0,2) node[above]{\small{$u_1$}};
	\draw (0,0) node[below]{\small{$u_2$}};
	\draw (0,1) node[left]{\small{$u_3$}};
	\draw (1,1) node[right]{\small{$u_4$}};
	\draw (0,-1) node{$\widetilde{G_A}'$};

	\begin{scope}[xshift = 5cm]
	\fill (0,1) circle (.1cm);
	\fill (1,1) circle (.1cm);
	\fill (0,2) circle (.1cm);
	\draw (0,1) -- (0,2) -- (1,1) -- (0,1);
	\draw (0,2) node[above]{\small{$u_{12}$}};
	\draw (0,1) node[left]{\small{$u_3$}};
	\draw (1,1) node[right]{\small{$u_4$}};
	\draw (0,-1) node{$G_A'$};
	\end{scope}
	
	\draw[->] (2,1) -- (3,1);
	
\begin{scope}[yshift = -4cm]

	\fill (0,0) circle (.1cm);
	\fill (-1,1) circle (.1cm);
	\fill (1,1) circle (.1cm);
	\fill (0,2) circle (.1cm);
	\draw (-1,1) -- (0,2) -- (1,1) -- (0,0) -- (-1,1) -- (1,1);
	\draw (0,2) node[above]{\small{$u_1$}};
	\draw (0,0) node[below]{\small{$u_2$}};
	\draw (-1,1) node[left]{\small{$u_3$}};
	\draw (1,1) node[right]{\small{$u_4$}};
	\draw (0,-1) node{$\widetilde{G_A}'$};
	
	\begin{scope}[xshift = 5cm]
	\fill (-1,1) circle (.1cm);
	\fill (1,1) circle (.1cm);
	\fill (0,2) circle (.1cm);
	\draw (-1,1) -- (0,2) -- (1,1) -- (-1,1);
	\draw (0,2) node[above]{\small{$u_{12}$}};
	\draw (-1,1) node[left]{\small{$u_3$}};
	\draw (1,1) node[right]{\small{$u_4$}};
	\draw (0,-1) node{$G_A'$};
	\end{scope}
	
	\draw[->] (2,1) -- (3,1);

\end{scope}

\end{tikzpicture}$$
\caption{\textbf{Top.} A path of length three such that no interior vertex is adjacent to both endpoints in $\widetilde{G_A}'$ adds a new a triangle in $G_A'$. \textbf{Middle.} A path of length three such that one interior vertex is adjacent to both endpoints in $\widetilde{G_A}'$ preserves the number of triangles in $G_A'$. \textbf{Bottom.} A path of length three such that both interior vertices are adjacent to both endpoints in $\widetilde{G_A}'$ decreases the number of triangles by one in $G_A'$.}
\label{figure:Triangle}
\end{figure}

Combining Equations \ref{equation:term1} through \ref{equation:term5} and ignoring an overall sign yields
\begin{align*}
\alpha_1 = & \; v-2 - P(v-2) -e_A +  \frac{1}{2} P^2 - \frac{1}{2} P +Pe_A + P_2 - P_0 + Q - S\\\
= & \;  (1-P)(v-2 - e_B + P)+ \frac{1}{2} P^2 - \frac{1}{2} P+ P_2 - P_0 + Q - S\\
= & \;  (1-P)(v-1-e) +P - P^2 + \frac{1}{2} P^2 - \frac{1}{2} P+ P_2 - P_0 + Q - S\\
= & \;  \beta_1(P-1) - \binom{P}{2} + P_2- P_0 + Q - S,
\end{align*}
as desired.

It remains to show that the difference in exponent of the $\alpha_0$ term and the $\alpha_{c-3}$ term is $4(c-3)$. The diagram $D$ can be considered as a $4$-regular planar graph whose vertices correspond to the crossings. The number of vertices, edges, and faces of the link diagram considered as a graph are $c$, $2c$, and $v+\overline{v}$ respectively. Since this graph is planar, its Euler characteristic is two, and $v+\overline{v}=c+2$. The difference in exponent between the first and last potential terms of $\langle D \rangle$ is
\begin{align*}
(c+2v - 8) - (-c-2\overline{v} + 8) = &\; 2c + 2(v+\overline{v}) -16\\
= & \; 2c + 2(c+2) - 16\\
= & \;4(c-3),
\end{align*}
as desired.
\end{proof}

\begin{example}
Let $L$ be the two-component almost alternating link with diagram $D$ as in Figure \ref{figure:AAExample}. For this diagram, we have $v=7, P=P_0=P_1=P_2=S=0, Q=1,\beta_1=4,\overline{v}=5, \overline{P}=3, \overline{P_0}=1, \overline{P_1}=2, \overline{P_2}=\overline{Q}=0, \overline{S}=1$, and $\overline{\beta}_1=4$. Theorem \ref{theorem:AAJones} implies that
$$\alpha_0 = 1,~\alpha_1=-3,~\alpha_{c-4}=3,~\text{and}~\alpha_{c-3}=-2.$$
The values of the coefficients can be seen in the Jones polynomial of $L$:
$$V_L(t) = t^{-17/2} - 3t^{-15/2} + 4t^{-13/2} - 5t^{-11/2} + 5t^{-9/2} - 5t^{-7/2} + 3t^{-5/2} -2t^{-3/2}.$$
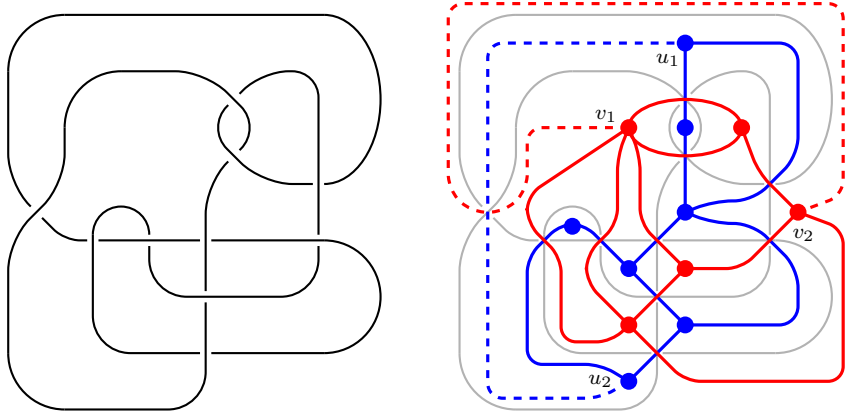
\begin{figure}[h]
$$\begin{tikzpicture}[thick, scale = .75]


\draw[rounded corners=4mm] (.4,4.6) -- (0,5) -- (0,7);
\draw (0,7) arc (180:90:1cm);
\draw[rounded corners = 2mm] (.6,4.4) -- (1,4) -- (1.4,4);
\draw (1.6,4) -- (3.4,4);
\draw (2.5,4.1) arc (0:180:.5cm);
\draw[rounded corners = 5mm] (1.5,4.1) -- (1.5,2) -- (3.4,2);
\draw[rounded corners = 5mm] (2.5,3.9) -- (2.5,3) -- (5.5,3) -- (5.5,3.9);
\draw[rounded corners = 4mm] (3.5,3.1) -- (3.5,5) -- (3.9,5.4);
\draw[rounded corners = 4mm] (3.9,6.4) -- (3.5,6) -- (4.5,5) -- (5.4,5);
\draw[rounded corners = 4mm] (4.1,5.6) -- (4.5,6) -- (3.5,7) -- (2,7);
\draw (2,7) arc (90:180:1cm);
\draw[rounded corners = 4mm] (1,6) -- (1,5) -- (0,4) -- (0,2);
\draw (0,2) arc (180:270:1cm);
\draw[rounded corners = 5mm] (1,1) -- (3.5,1) -- (3.5,2.9);
\draw[rounded corners = 3.5mm] (4.1,6.6) -- (4.5,7) -- (5.5,7) -- (5.5,4.1);
\draw (3.6,4) -- (5.6,4);
\draw (5.6,4) arc (90:-90:1cm);
\draw (3.6,2) -- (5.6,2);
\draw (1,8) --(5.6,8);
\draw (5.6,8) arc (90:-90:1cm and 1.5cm);


\begin{scope}[xshift = 8cm, white!70!black]

	\draw[rounded corners=4mm] (.4,4.6) -- (0,5) -- (0,7);
	\draw (0,7) arc (180:90:1cm);
	\draw[rounded corners = 2mm] (.6,4.4) -- (1,4) -- (1.4,4);
	\draw (1.6,4) -- (3.4,4);
	\draw (2.5,4.1) arc (0:180:.5cm);
	\draw[rounded corners = 5mm] (1.5,4.1) -- (1.5,2) -- (3.4,2);
	\draw[rounded corners = 5mm] (2.5,3.9) -- (2.5,3) -- (5.5,3) -- (5.5,3.9);
	\draw[rounded corners = 4mm] (3.5,3.1) -- (3.5,5) -- (3.9,5.4);
	\draw[rounded corners = 4mm] (3.9,6.4) -- (3.5,6) -- (4.5,5) -- (5.4,5);
	\draw[rounded corners = 4mm] (4.1,5.6) -- (4.5,6) -- (3.5,7) -- (2,7);
	\draw (2,7) arc (90:180:1cm);
	\draw[rounded corners = 4mm] (1,6) -- (1,5) -- (0,4) -- (0,2);
	\draw (0,2) arc (180:270:1cm);
	\draw[rounded corners = 5mm] (1,1) -- (3.5,1) -- (3.5,2.9);
	\draw[rounded corners = 3.5mm] (4.1,6.6) -- (4.5,7) -- (5.5,7) -- (5.5,4.1);
	\draw (3.6,4) -- (5.6,4);	
	\draw (5.6,4) arc (90:-90:1cm);
	\draw (3.6,2) -- (5.6,2);
	\draw (1,8) --(5.6,8);
	\draw (5.6,8) arc (90:-90:1cm and 1.5cm);
	
	
	\begin{scope}[blue, very thick]
		
		\fill (4,7.5) circle (.15cm);
		\fill (4,6) circle (.15cm);
		\fill (4,4.5) circle (.15cm);
		\fill (3,3.5) circle (.15cm);
		\fill (4,2.5) circle (.15cm);
		\fill (2,4.25) circle (.15cm);
		\fill (3,1.5) circle (.15cm);
		
		\draw[dashed, rounded corners = 2mm] (3,1.5) -- (2.5,1.2)  -- (.5,1.2) -- (.5,7.5) -- (4,7.5);
		\draw (3,1.5) -- (4,2.5) -- (3,3.5) -- (4,4.5) -- (4,6) -- (4,7.5);
		\draw[rounded corners = 2mm] (3,1.5) -- (2.5,1.8) -- (1.2,1.8) -- (1.2,3.7) -- (1.8,4.3) -- (2.2,4.3) -- (2.8,3.7) -- (3,3.5);
		\draw[rounded corners = 2.5mm] (4,4.5) -- (4.5,4.3) -- (5.2,4.3) -- (6,3.5) -- (6,2.5) -- (4,2.5);
		\draw[rounded corners = 2.5mm] (4,4.5) -- (4.5,4.7) -- (5.2,4.7) -- (6,5.5) -- (6,7.5) -- (4,7.5);
	
	\end{scope}
	
	
	\begin{scope}[red, very thick]
	
	\fill (3,6) circle (.15cm);
	\fill (4,3.5) circle (.15cm);
	\fill (3,2.5) circle (.15cm);
	\fill (6,4.5) circle (.15cm);
	\fill (5,6) circle (.15cm);
	
	\draw[dashed, rounded corners = 2mm] (3,6) -- (1.2,6) -- (1.2,4.9) -- (.8,4.5) -- (.2,4.5) -- (-.2,4.9) -- (-.2,8.2) -- (6.8,8.2) -- (6.8,4.8) -- (6,4.5);
	\draw (4,6) ellipse (1cm and .5cm);
	\draw (3,2.5) -- (4,3.5);
	\draw[rounded corners = 2mm] (3,2.5) -- (2.25,3.25) -- (2.25,3.75) -- (2.8,4.3) -- (2.8,5.5) -- (3,6);
	\draw[rounded corners = 2mm] (3,2.5) -- (2.7,2.2) -- (1.8,2.2) -- (1.8,3.7) -- (1.2,4.3) -- (1.2,4.8) -- (3,6);
	\draw[rounded corners = 2mm] (3,6) -- (3.2,5.5) -- (3.2,4.3) -- (4,3.5);
	\draw[rounded corners = 2mm] (6,4.5) -- (5.3,5.2) -- (5,6);
	\draw[rounded corners = 2mm] (6,4.5) -- (5,3.5) -- (4,3.5);
	\draw[rounded corners = 2mm] (6,4.5) -- (6.8,4.2) -- (6.8, 1.5) -- (4,1.5) -- (3,2.5);
		
	\end{scope}
	
	\begin{scope}[black]
	\draw (3.7,7.2) node{\footnotesize{$u_1$}};
	\draw (2.5,1.5) node{\footnotesize{$u_2$}};
	\draw (2.6,6.2) node{\footnotesize{$v_1$}};
	\draw (6.1,4.1)node{\footnotesize{$v_2$}};

	\end{scope}
\end{scope}

\end{tikzpicture}$$
\caption{A diagram of an almost alternating link $L$, and its checkerboard graphs $G$ (in blue) and $\overline{G}$ (in red).}
\label{figure:AAExample}
\end{figure}

\end{example}

Adams et al. \cite{Adams:Almost} showed that the span of the Jones polynomial gives a lower bound on the fewest number of crossings in an almost alternating diagram of an almost alternating link, as reflected in Theorem \ref{theorem:AAJones}. If $\Span V_L(t) = c(D)-3$, then $D$ has the fewest number of crossings among all almost alternating diagrams of $L$. In the following theorem, we use the notation from Theorem \ref{theorem:AAJones}. Parts (1) and (4) are implicit, but never directly stated, in \cite{DasLow:TuraevJones}.
\begin{theorem}
\label{theorem:Crossing}
Let $D$ be a strongly reduced almost alternating diagram of the link $L$ with $c(D)$ crossings, as in Figure \ref{figure:AlmostAlternating}. 
\begin{enumerate}
\item If neither $P$ nor $\overline{P}$ is one, then $D$ has the fewest number of crossings among all almost alternating diagrams of $L$.
\item If  $P=1$, $\overline{P}\neq 1$ and
$$P_2 - P_0+ Q - S \neq 0,$$
then the fewest number of crossings among all almost alternating diagrams of $L$ is either $c(D)$ or $c(D)-1$.
\item If $P\neq 1$, $\overline{P}=1$ and
$$\overline{P}_2 - \overline{P}_0 + \overline{Q} - \overline{S} \neq 0,$$
then the fewest number of crossings among all almost alternating diagrams of $L$ is either $c(D)$ or $c(D)-1$.
\item If both $P$ and $\overline{P}$ are one, then $L$ has another almost alternating diagram $D'$ with two fewer crossings than $D$.
\end{enumerate}
\end{theorem}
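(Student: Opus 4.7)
My plan is to combine the formulas from Theorem \ref{theorem:AAJones} with the bound $\operatorname{span} V_L(t) \leq c(D') - 3$ that holds for every almost alternating diagram $D'$ of $L$, proved in \cite{Adams:Almost}. Parts (1)--(3) then reduce to a span computation, while part (4) requires an explicit diagrammatic construction.

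For part (1), since neither $P$ nor $\overline{P}$ equals $1$, both $\alpha_0 = (-1)^v(P-1)$ and $\alpha_{c-3} = (-1)^{\overline{v}}(\overline{P}-1)$ are nonzero, so $\operatorname{span} V_L(t) = c(D) - 3$ and the bound immediately yields $c(D') \geq c(D)$ for every almost alternating diagram $D'$ of $L$. For part (2), $P = 1$ kills $\alpha_0$ and collapses the formula for $\alpha_1$ to $(-1)^{v-1}(P_2 - P_0 + Q - S)$, which is nonzero by hypothesis; meanwhile $\overline{P} \neq 1$ keeps $\alpha_{c-3} \neq 0$. The top power of $A$ in $\langle D \rangle$ therefore drops by exactly four, giving $\operatorname{span} V_L(t) = c(D) - 4$, and the bound forces $c(D') \geq c(D) - 1$. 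Part (3) is symmetric, with the roles of $G$ and $\overline{G}$ exchanged.

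For part (4), the span inequality cannot produce a diagram, only bound a crossing number, so I must exhibit an almost alternating diagram with two fewer crossings directly. The plan is diagrammatic: the strongly reduced hypothesis together with $P = 1$ forces $G'$ to contain a unique triangle on $\{u_1,u_2,u_3\}$ whose third side is the dealternator edge, and $\overline{P} = 1$ analogously forces a unique triangle on $\{v_1,v_2,v_3\}$ in $\overline{G}'$. Translated back to $D$, these two triangles correspond to a pair of crossings flanking the dealternator on its shaded side and a pair on its unshaded side, together forming a local five-crossing ``clasp'' tangle centered at the dealternator. Using planar duality between $G$ and $\overline{G}$ to constrain how these two pairs sit with respect to one another, I expect this tangle to admit a flype followed by a Reidemeister II move that removes two of the five crossings while keeping the dealternator in place as the unique non-alternating crossing and leaving the rest of $D$ untouched.

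The main obstacle is making the local analysis in part (4) rigorous. Concretely, I would need to (i) enumerate the possible planar arrangements of the two triangles around the dealternator that are consistent with the strongly reduced and non-split hypotheses, (ii) verify case by case that the resulting tangle admits an explicit sequence of flypes and Reidemeister moves reducing the crossing count by exactly two, and (iii) confirm that the resulting diagram $D'$ is still reduced, non-alternating, and almost alternating, so that it is a genuine new almost alternating diagram of $L$. Once this case analysis is complete, parts (1)--(3) follow directly from Theorem \ref{theorem:AAJones} and the span bound.
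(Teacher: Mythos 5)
Your treatment of parts (1)--(3) is correct and is exactly the paper's argument: Theorem \ref{theorem:AAJones} pins down which of $\alpha_0,\alpha_1,\alpha_{c-4},\alpha_{c-3}$ are nonzero, the span of $V_L(t)$ is read off as $c(D)-3$ or $c(D)-4$, and the Adams et al.\ bound $\operatorname{span}V_L(t)\leq c(D')-3$ for every almost alternating diagram $D'$ finishes the job. Nothing to add there.

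Part (4) is where the gap lies, and you are right to flag it as the hard step, but your envisioned local model is not quite the correct one. You count a ``five-crossing clasp'': the dealternator plus two crossings realizing the length-two path $u_1$--$u_3$--$u_2$ in $G'$ plus two more realizing the length-two path $v_1$--$v_3$--$v_2$ in $\overline{G}'$. But each crossing of $D$ carries \emph{both} a $G$-edge and a dual $\overline{G}$-edge, and in this configuration the two paths are realized by the \emph{same} pair of crossings adjacent to the dealternator; the correct local picture (the first diagram of Figure \ref{figure:Reduction}) is the dealternator together with exactly two additional crossings, with three alternating tangles $R_1$, $R_2$, $R_3$ filling out the rest of $D$. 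Your proposed enumeration step (i) would eventually discover this, but as stated the starting point is wrong, and the move sequence you anticipate (one flype then a Reidemeister II) is too short: in the actual configuration the two crossings to be cancelled are not yet in clasp position, and the paper needs two flypes, then a Reidemeister III move to slide a strand into place, and only then a Reidemeister II move to delete the two crossings. Your checklist item (iii) --- verifying that the resulting diagram is still almost alternating with the dealternator as its unique non-alternating crossing --- is the right thing to insist on and is visible in the final diagram of Figure \ref{figure:Reduction}. So the strategy is the paper's strategy, but part (4) as written is an unexecuted plan whose key structural claim needs correcting before the case analysis can be carried out.
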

\begin{proof}
If neither $P$ nor $\overline{P}$ is one, then Theorem \ref{theorem:AAJones} implies $\Span V_L(t) = c(D)-3$, and thus $D$ has the fewest number of crossings among all almost alternating diagrams of $L$. In either cases (2) or (3), Theorem \ref{theorem:AAJones} implies $\Span V_L(t) = c(D)-4$ and the result follows.

Suppose both $P$ and $\overline{P}=1$. Then $D$ is the first diagram in Figure \ref{figure:Reduction} where each tangle $R_i$ is alternating. As shown in Figure \ref{figure:Reduction}, there is an isotopy of the link starting with $D$ and ending with another almost alternating diagram with two fewer crossings.
\end{proof}
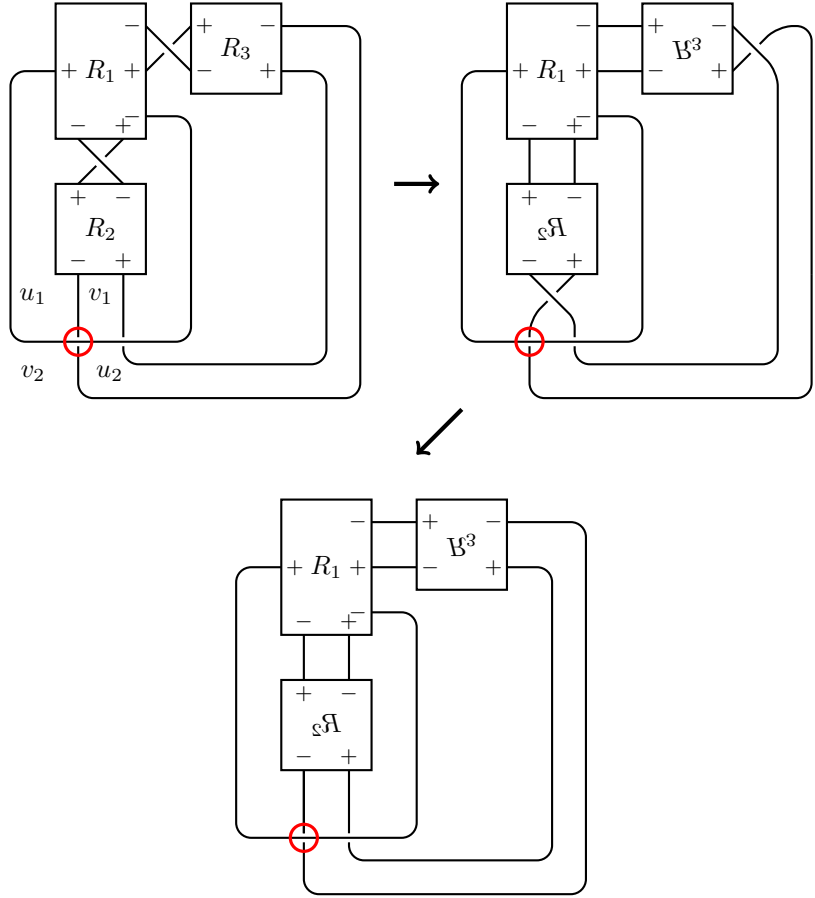
\begin{figure}[h]
$$\begin{tikzpicture}[thick, scale = .6]
\draw (0,0) rectangle (2,2);
\draw (0,3) rectangle (2,6);
\draw (3,4) rectangle (5,6);

\draw (1,4.5) node{$R_1$};
\draw (1,1) node{$R_2$};
\draw (4,5) node{$R_3$};

\draw (.3,4.5) node{\footnotesize{$+$}};
\draw (.5,3.3) node{\footnotesize{$-$}};
\draw (1.5,3.3) node{\footnotesize{$+$}};
\draw (1.7,3.5) node{\footnotesize{$-$}};
\draw (1.7,4.5) node{\footnotesize{$+$}};
\draw (1.7,5.5) node{\footnotesize{$-$}};

\draw (.5,1.7) node{\footnotesize{$+$}};
\draw (1.5,1.7) node{\footnotesize{$-$}};
\draw (.5,.3) node{\footnotesize{$-$}};
\draw (1.5,.3) node{\footnotesize{$+$}};

\draw (3.3,4.5) node{\footnotesize{$-$}};
\draw (3.3,5.5) node{\footnotesize{$+$}};
\draw (4.7,4.5) node{\footnotesize{$+$}};
\draw (4.7,5.5) node{\footnotesize{$-$}};

\draw (.5,3) -- (1.5,2);
\draw (.5,2) -- (.9,2.4);
\draw (1.1,2.6) -- (1.5,3);
\draw (2,5.5) -- (3,4.5);
\draw (2,4.5) -- (2.4,4.9);
\draw (2.6,5.1) -- (3,5.5);
\draw[rounded corners = 2mm] (2,3.5) -- (3,3.5) -- (3,-1.5) -- (-1,-1.5) -- (-1,4.5) -- (0,4.5);
\draw (1.5,0) -- (1.5,-1.4);
\draw[rounded corners = 2mm] (1.5,-1.6) -- (1.5,-2) -- (6,-2) -- (6,4.5) -- (5,4.5);
\draw (.5,0) -- (.5,-1.4);
\draw[rounded corners = 2mm] (.5,-1.6) -- (.5,-2.75) -- (6.75,-2.75) -- (6.75,5.5) -- (5,5.5);

\draw (-.5,-.5) node{$u_1$};
\draw (1.2,-2.2) node{$u_2$};
\draw (1,-.5) node{$v_1$};
\draw (-.5,-2.2) node{$v_2$};

\draw[red,very thick] (.5,-1.5) circle (.3cm);


\begin{scope}[xshift = 10cm]

\draw (0,0) rectangle (2,2);
\draw (0,3) rectangle (2,6);
\draw (3,4) rectangle (5,6);

\draw (1,4.5) node{$R_1$};
\draw (1,1) node{\reflectbox{$R_2$}};
\draw (4,5) node{\raisebox{\depth}{\scalebox{1}[-1]{$R_3$}}};
\draw (.3,4.5) node{\footnotesize{$+$}};
\draw (.5,3.3) node{\footnotesize{$-$}};
\draw (1.5,3.3) node{\footnotesize{$+$}};
\draw (1.7,3.5) node{\footnotesize{$-$}};
\draw (1.7,4.5) node{\footnotesize{$+$}};
\draw (1.7,5.5) node{\footnotesize{$-$}};

\draw (.5,1.7) node{\footnotesize{$+$}};
\draw (1.5,1.7) node{\footnotesize{$-$}};
\draw (.5,.3) node{\footnotesize{$-$}};
\draw (1.5,.3) node{\footnotesize{$+$}};

\draw (3.3,4.5) node{\footnotesize{$-$}};
\draw (3.3,5.5) node{\footnotesize{$+$}};
\draw (4.7,4.5) node{\footnotesize{$+$}};
\draw (4.7,5.5) node{\footnotesize{$-$}};

\draw (.5,3) -- (.5,2);
\draw (1.5,2) -- (1.5,3);

\draw (2,5.5) -- (3,5.5);
\draw (2,4.5) -- (3,4.5);
\draw[rounded corners = 2mm] (2,3.5) -- (3,3.5) -- (3,-1.5) -- (-1,-1.5) -- (-1,4.5) -- (0,4.5);
\draw (1.5,0) -- (1.1,-.4);
\draw[rounded corners = 2mm] (.9,-.6) -- (.5,-1) -- (.5,-1.4);
\draw[rounded corners = 2mm] (1.5,-1.6) -- (1.5,-2) -- (6,-2) -- (6,4.5) -- (5,5.5);
\draw[rounded corners = 1mm] (.5,0) -- (1.5,-1) -- (1.5,-1.4);
\draw[rounded corners = 2mm] (.5,-1.6) -- (.5,-2.75) -- (6.75,-2.75) -- (6.75,0);
\draw[rounded corners = 2mm] (6.75,0) --  (6.75,5.5) -- (6,5.5) -- (5.6,5.1);
\draw (5,4.5) -- (5.4,4.9);

\draw[red,very thick] (.5,-1.5) circle (.3cm);

\end{scope}


\begin{scope}[xshift = 5cm, yshift = -11cm]

\draw (0,0) rectangle (2,2);
\draw (0,3) rectangle (2,6);
\draw (3,4) rectangle (5,6);

\draw (1,4.5) node{$R_1$};
\draw (1,1) node{\reflectbox{$R_2$}};
\draw (4,5) node{\raisebox{\depth}{\scalebox{1}[-1]{$R_3$}}};
\draw (.3,4.5) node{\footnotesize{$+$}};
\draw (.5,3.3) node{\footnotesize{$-$}};
\draw (1.5,3.3) node{\footnotesize{$+$}};
\draw (1.7,3.5) node{\footnotesize{$-$}};
\draw (1.7,4.5) node{\footnotesize{$+$}};
\draw (1.7,5.5) node{\footnotesize{$-$}};

\draw (.5,1.7) node{\footnotesize{$+$}};
\draw (1.5,1.7) node{\footnotesize{$-$}};
\draw (.5,.3) node{\footnotesize{$-$}};
\draw (1.5,.3) node{\footnotesize{$+$}};

\draw (3.3,4.5) node{\footnotesize{$-$}};
\draw (3.3,5.5) node{\footnotesize{$+$}};
\draw (4.7,4.5) node{\footnotesize{$+$}};
\draw (4.7,5.5) node{\footnotesize{$-$}};

\draw (.5,3) -- (.5,2);
\draw (1.5,2) -- (1.5,3);

\draw (2,5.5) -- (3,5.5);
\draw (2,4.5) -- (3,4.5);
\draw[rounded corners = 2mm] (2,3.5) -- (3,3.5) -- (3,-1.5) -- (-1,-1.5) -- (-1,4.5) -- (0,4.5);
\draw (1.5,0) -- (1.5,-1.4);
\draw[rounded corners = 1mm] (.5,0) -- (.5,-1.4);
\draw[rounded corners = 2mm] (1.5,-1.6) -- (1.5,-2) -- (6,-2) -- (6,4.5) -- (5,4.5);
\draw (.5,0) -- (.5,-1.4);
\draw[rounded corners = 2mm] (.5,-1.6) -- (.5,-2.75) -- (6.75,-2.75) -- (6.75,5.5) -- (5,5.5);

\draw[red,very thick] (.5,-1.5) circle (.3cm);

\end{scope}

\draw[ultra thick, ->] (7.5,2) -- (8.5,2);
\draw[ultra thick, ->] (9,-3) -- (8,-4);

\end{tikzpicture}$$
\caption{If $P=\overline{P}=1$, then the diagram $D$ has the form of the diagram on the top left, where $R_1$, $R_2$, and $R_3$ are alternating tangles. Two flypes lead to the second diagram. A Reidemeister 3 move followed by a Reidemeister 2 move yields the third diagram. The third diagram is almost alternating and has two fewer crossings than $D$. The encircled crossing is the dealternator.}
\label{figure:Reduction}
\end{figure}

The following lemma will help us to compare the terms in Theorem \ref{theorem:AAJones}.
\begin{lemma}
\label{lemma:Dual}
Let $D$ be a strongly reduced almost alternating diagram with the fewest number of crossings among all almost alternating diagrams of the link. The following statements hold.
\begin{enumerate}
\item At least one of $P$ or $\overline{P}$ is contained in $\{0,2\}$.
\item $P+Q\leq \beta_1$ and $\overline{P}+\overline{Q} \leq \overline{\beta}_1$.
\end{enumerate} 
\end{lemma}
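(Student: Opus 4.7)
My plan for Part (2) is to exhibit $P+Q$ linearly independent cycles in the cycle space of $G'$, which has dimension $\beta_1$, thereby forcing $P+Q\leq\beta_1$. Each of the $P$ length-two paths from $u_1$ to $u_2$ closes with the dealternator edge $e_d$ into a triangle, and each of the $Q$ length-three paths (with no interior vertex adjacent to both $u_1$ and $u_2$) closes with $e_d$ into a 4-cycle. To prove $\mathbb{F}_2$-independence I will use a ``private edge'' argument: in any vanishing combination, each triangle with middle vertex $w$ contributes the edges $u_1 w$ and $u_2 w$, neither of which appears in any other cycle in the collection (other triangles have distinct middle vertices, and the defining condition on the $Q$ length-three paths excludes $w\in N(u_1)\cap N(u_2)$ as an interior vertex). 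Hence no triangle participates. Next, each 4-cycle contains a private middle edge $w_3 w_4$ with $w_3\in N(u_1)\setminus N(u_2)$ and $w_4\in N(u_2)\setminus N(u_1)$, which cannot lie in any triangle (whose non-$e_d$ edges are incident to $u_1$ or $u_2$) nor in any other 4-cycle in the collection (since the partition of endpoints into $N_1\setminus N_2$ and $N_2\setminus N_1$ fixes the roles of $w_3$ and $w_4$). Thus no 4-cycle participates either, so the combination is empty. The same argument applied to $\overline{G}'$ yields $\overline{P}+\overline{Q}\leq\overline{\beta}_1$.

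For Part (1) my plan is a case analysis combining Theorem \ref{theorem:Crossing} with the Dasbach-Lowrance result that at least one of the first or last coefficient of $V_L(t)$ has absolute value one. The case $P=\overline{P}=1$ is immediately ruled out by Theorem \ref{theorem:Crossing}(4), which would produce an almost alternating diagram with two fewer crossings, violating minimality. When neither $P$ nor $\overline{P}$ equals one, the extreme Kauffman bracket coefficients $\alpha_0=(-1)^v(P-1)$ and $\alpha_{c-3}=(-1)^{\overline{v}}(\overline{P}-1)$ from Theorem \ref{theorem:AAJones} are both nonzero, and they are (up to sign and $t$-shift) the first and last coefficients of $V_L(t)$; Dasbach-Lowrance then forces $|P-1|=1$ or $|\overline{P}-1|=1$, hence $P\in\{0,2\}$ or $\overline{P}\in\{0,2\}$. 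In the asymmetric case, say $P=1$ and $\overline{P}\neq 1$ (the other case is the mirror), the trailing coefficient $\alpha_{c-3}=\pm(\overline{P}-1)$ remains nonzero and is the trailing coefficient of $V_L(t)$, while the leading coefficient of $V_L(t)$ arises from a lower-order $\alpha_k$ whose behavior is controlled by Theorem \ref{theorem:Crossing}(2) and the formula for $\alpha_1$ in Theorem \ref{theorem:AAJones}. I plan to show that minimality forces the Dasbach-Lowrance $\pm 1$ constraint onto the trailing end, giving $\overline{P}\in\{0,2\}$.

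The hardest step will be the asymmetric case of Part (1). Specifically, I need to prevent the Dasbach-Lowrance $\pm 1$ coefficient from occurring only at the leading end (which would leave $\overline{P}$ unconstrained). When $P=1$ the $\alpha_1$ formula simplifies to $\pm(P_2-P_0+Q-S)$, and the interaction between the value of this quantity and the minimality hypothesis (as characterized in Theorem \ref{theorem:Crossing}(2)) should rule out $|P_2-P_0+Q-S|=1$ together with $\overline{P}\geq 3$. Making this argument precise, likely via a geometric analysis of the tangle $R$ near the dealternator when $P=1$, is the main obstacle I anticipate.
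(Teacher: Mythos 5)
Your Part (2) is correct and is essentially the paper's argument made rigorous: the paper simply asserts that each of the $P+Q$ paths, closed up with the dealternator edge, contributes one to the circuit rank, whereas you justify the linear independence of the resulting $P+Q$ cycles in the $\mathbb{F}_2$-cycle space via private edges ($u_1w$ and $u_2w$ for the triangles, the middle edge $w_3w_4$ for the $4$-cycles, with the hypothesis that no interior vertex of a $Q$-path is adjacent to both $u_1$ and $u_2$ doing exactly the work needed to keep these edges out of the other cycles). That part stands.

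Part (1) has a genuine gap, and you have correctly located it: the asymmetric case $P=1$, $\overline{P}\geq 3$ is not handled, and the route you sketch for it will not close. The Dasbach--Lowrance constraint only says that \emph{some} extreme coefficient of $V_L(t)$ has absolute value one; when $\alpha_0=0$ the leading coefficient is whichever $\alpha_i$ is first nonzero, and nothing in Theorem \ref{theorem:AAJones} or Theorem \ref{theorem:Crossing} controls $\alpha_2,\alpha_3,\dots$, so you cannot force the $\pm1$ onto the trailing end. More to the point, the obstruction to $P=1,\ \overline{P}\geq 3$ is not a Jones-polynomial phenomenon at all but a planar duality one, and this is the idea your proposal is missing: if $\overline{P}\geq 3$, then every path from $v_1$ to $v_2$ in $\overline{G}$ crosses at least $\overline{P}$ edges' worth of dual structure, so the shortest path from $u_1$ to $u_2$ in $G'$ avoiding the dealternator edge has length at least $\overline{P}>2$, forcing $P=0$ (and symmetrically $P\geq 3$ forces $\overline{P}=0$). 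With that observation the whole of Part (1) is immediate: if neither $P$ nor $\overline{P}$ lies in $\{0,2\}$, the duality forces $P=\overline{P}=1$, which Theorem \ref{theorem:Crossing}(4) rules out by minimality --- this also eliminates the case split on Dasbach--Lowrance entirely (which is just as well, since the first/last coefficient computation in \cite{DasLow:TuraevJones} is itself downstream of this duality fact, so leaning on it here risks circularity).
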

\begin{proof}
Suppose that $P>2$, i.e. there are more than two paths of length two between $u_1$ and $u_2$ in $G'$. Each path from $u_1$ to $u_2$ in $G'$ corresponds to possibly more than one path from $u_1$ to $u_2$ in $G$. Each edge in a path from $u_1$ to $u_2$ in $G$ is dual to an edge in $\overline{G}$ which then corresponds to an edge in $\overline{G}'$. Therefore, the shortest path between $v_1$ and $v_2$ is at least length $P$, and hence $\overline{P}=0$. Likewise, if $\overline{P}>2$, it follows that $P=0$. Since $D$ has the fewest number of crossings among all almost alternating diagrams of the link, Theorem \ref{theorem:Crossing} implies that not both $P$ and $\overline{P}$ can be one. Hence either $P$ or $\overline{P}$ is contained in $\{0,2\}$.

The edge in $G'$ corresponding to the dealternator is incident to $u_1$ and $u_2$. Each path between $u_1$ and $u_2$ increases the circuit rank $\beta_1$ by one. Since the total number of paths between $u_1$ and $u_2$ in $G'$ is at least $P+Q$, it follows that $P+Q\leq \beta_1$. The argument for the inequality $\overline{P} + \overline{Q} \leq \overline{\beta}_1$ is similar.
\end{proof}

Turaev genus one links are closely related to almost alternating links. A link is of Turaev genus one if and only if it is non-alternating and has a diagram of Turaev genus one. An almost alternating link is always Turaev genus one, but it is an open question whether there is a Turaev genus one link that is not almost alternating; see \cite{Lowrance:AltDist} for more discussion. For a more comprehensive review of the Turaev genus of a link, see Champanerkar and Kofman's recent survey \cite{CK:Survey}.

Let $L$ be a link, and let $B$ be a ball whose boundary sphere transversely intersects $L$ in four points. The pair $(L\cap B,B)$ forms a two-tangle. After an isotopy of $L$, it can be assumed that the boundary of $B$ is a round sphere and that the intersection points $L\cap\partial B$ are permuted by any $180^{\circ}$ rotation of $B$ about a coordinate axis.  A {\em mutation} of $L$ is the link obtained by removing the ball $B$ from $S^3$, rotating it $180^{\circ}$ about a coordinate axis, and gluing it back into $S^3$. Any link that can be obtained from $L$ via a sequence of mutations is a {\em mutant} of $L$. Armond and Lowrance \cite{ArmLow:Turaev} (see also \cite{Kim:TuraevClassification}) classified links of Turaev genus one and used these classifications to prove the following theorem.
\begin{theorem}[Armond, Lowrance]
\label{theorem:mutant}
Every link of Turaev genus one is mutant to an almost alternating link.
\end{theorem}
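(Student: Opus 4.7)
The plan is to prove the theorem by classifying reduced diagrams of Turaev genus one and then exhibiting, for each class in the classification, an explicit Conway mutation that converts the diagram into an almost alternating one. By definition, a connected diagram $D$ has $g_T(D)=1$ precisely when $s_A(D)+s_B(D)=c(D)$, and on the Turaev surface $\Sigma_D$ (a torus in this case) the link projects as an alternating diagram whose complementary regions are discs in one-to-one correspondence with the all-$A$ and all-$B$ Kauffman states. So the starting point is to understand the combinatorial structure of an alternating diagram on a torus whose $A$-faces and $B$-faces are discs.

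First I would use this toroidal alternating structure to produce a normal form. The hypothesis $g_T(D)=1$ forces the $A$-state ribbon graph (equivalently, the all-$A$ state on $\Sigma_D$) to carry an essential cycle on the torus, and dually the $B$-state carries a transverse essential cycle. Cutting $\Sigma_D$ along these two curves realizes $D$ as an alternating tangle $R$, or a small number of alternating tangles $R_1,\dots, R_k$, wired into a fixed \emph{toroidal template}. Following Armond--Lowrance, I would show that after isotopy the template has a very restricted form: up to symmetry, the non-alternating behavior of $D$ on $S^2$ is localized at either one or two crossings where the alternating pattern coming from $R$ is forced to clash with the alternating pattern that wraps around the torus.

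Next I would split into cases according to how many "obstructing" crossings the template produces. In the case of a single obstruction the template is already an almost alternating diagram and there is nothing to do; the link is in fact almost alternating (this case recovers one direction of the easy implication that almost alternating links have Turaev genus at most one). In the remaining case, the two obstructing crossings sit on opposite sides of a tangle ball $B$ containing one of the alternating sub-tangles. I would locate this ball and check that there is a coordinate axis such that the $180^\circ$ rotation about that axis permutes the four boundary points; the resulting Conway mutation rotates the sub-tangle and amalgamates the two obstructions into a single dealternator, producing an almost alternating diagram of a mutant link.

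The hardest step will be the classification itself: proving that every reduced Turaev genus one diagram is, up to isotopy on $S^2$, an alternating tangle plugged into one of finitely many toroidal templates. This requires a careful analysis of how essential simple closed curves on $\Sigma_D$ interact with the alternating projection, and in particular a reduction argument showing that when the templates are more complicated one can isotope crossings between tangle regions to decrease complexity, eventually landing in the normal form. Once the classification and normal forms are established, verifying that the required mutation exists for each template is a finite, case-by-case combinatorial check of the sort that can be read off the pictures produced in the classification.
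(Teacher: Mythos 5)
The paper does not prove this statement: it is imported from Armond and Lowrance \cite{ArmLow:Turaev} (see also \cite{Kim:TuraevClassification}), where it is deduced from a structure theorem classifying links of Turaev genus one. Your outline mirrors the broad strategy of that source --- use the genus-one Turaev surface to decompose the diagram into alternating tangles, then use Conway mutation to consolidate the non-alternating crossings into a single dealternator --- so the plan is pointed in the right direction.

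There is, however, a genuine gap at the decisive step. First, your assertion that after normalization the non-alternating behavior is ``localized at either one or two crossings'' is unjustified and, as stated, not what the classification gives: the alternating decomposition theorem of \cite{ArmLow:Turaev} produces a cyclic arrangement of $2k$ alternating tangles with a non-alternating junction between each consecutive pair, for arbitrary $k\geq 1$. Consequently a single mutation of one tangle ball cannot in general yield an almost alternating diagram; one needs an inductive sequence of mutations, each merging adjacent tangles (or transporting a junction around the cycle) so as to reduce the number of tangles until only one non-alternating crossing remains. Your two-case split (one obstruction versus two) silently discards the infinitely many templates with $k>1$. Second, you explicitly defer ``the hardest step'' --- the classification of Turaev genus one diagrams into finitely many toroidal templates --- but that classification is essentially the entire mathematical content of the theorem; the mutation step that follows it is, as you say, a routine check. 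As written, the proposal is a plausible plan of attack rather than a proof: the structure theorem it rests on is neither proved nor correctly stated, and the reduction from it to the conclusion omits the induction that the general case requires.
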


Theorem \ref{theorem:Sign} now follows from Theorem \ref{theorem:AAJones} and Lemma \ref{lemma:Dual}.
\begin{proof}[Proof of Theorem \ref{theorem:Sign}]
Suppose that $D$ is a strongly reduced almost alternating diagram of $L$. By Lemma \ref{lemma:Dual} either $P$ or $\overline{P}$ is contained in $\{0,2\}$. Without loss of generality, suppose that $P\in\{0,2\}$. First, let $P=0$. Then $P_0=P_2=S=0$,
\begin{align*}
\alpha_0 = & \; (-1)^{v+1},~\text{and}\\
\alpha_1 = & \; (-1)^{v-1}(Q-\beta_1).
\end{align*}
By Lemma \ref{lemma:Dual}, we have that $Q-\beta_1 \leq 0$, and thus $\alpha_0 \alpha_1 \leq 0$. 

Now suppose that $P=2$. Then 
\begin{align*}
\alpha_0 = & \; (-1)^v~\text{and}\\
\alpha_1 = & \; (-1)^{v-1} (\beta_1 - 1 + P_2-P_0+Q - S).
\end{align*}
Since $P=2$, it follows that $-2\leq P_2 - P_0 \leq 2$, $S\in \{0,1\}$, and $\beta_1\geq 2$. Suppose $S=0$. The quantity $\beta_1-1+P_2-P_0+Q-S$ achieves its minimum of $-1$ when $\beta_1=2, P_2 =0, P_0=2,$ and $Q=0$. In this case, $G$ is a $4$-cycle with an additional edge between $u_1$ and $u_2$, and $D$ is a diagram of the two component unlink, as shown in Figure \ref{figure:S=0}. Hence if $D$ is a diagram of an almost alternating link and $S=0$, then $\beta_1-1+P_2-P_0+Q-S\geq 0$.
\begin{figure}[h]
$$\begin{tikzpicture}


\draw (0.2,0) node[below]{\small{$u_2$}};
\draw (0.2,2) node[above]{\small{$u_1$}};
\draw (0,-1) node[below]{$G$};

\fill (0,0) circle (.1cm);
\fill (1,1) circle (.1cm);
\fill (-1,1) circle (.1cm);
\fill (0,2) circle (.1cm);
\draw (0,0) -- (1,1) -- (0,2) -- (-1,1) -- (0,0);
\draw[dashed] (0,2) arc (0:180:.75cm);
\draw[dashed]  (0,0) arc (0:-180:.75cm);
\draw[dashed]  (-1.5,0) -- (-1.5,2);

\begin{scope}[xshift = 4cm, thick]

\draw (0,.6) -- (0,2.1);
\draw (0,2.1) arc (0:180:.4cm and .3cm);
\draw (-.1,1.5) arc (90:270:.4cm and .5cm);
\draw (-.1,.5) -- (.9,.5);
\draw (0,.4) -- (0,-.1);
\draw (0,-.1) arc (0:-180:.4cm and .3cm);
\draw (1,1.4) -- (1,-.3);
\draw (.1,1.5) -- (1.1,1.5);
\draw (1.1,.5) arc (-90:90:.4cm and .5cm);
\draw[rounded corners = 2mm] (-.8,2.1) -- (-.8,1.5) -- (-1.8,.5) -- (-1.8,-.3);
\draw[rounded corners = 2mm] (-.8,-.1) -- (-.8,.5) -- (-1.2, .9);
\draw[rounded corners = 2mm] (-1.4,1.1) -- (-1.8,1.5) -- (-1.8,2.3);
\draw (-1.8,2.3) arc (180:90:.4cm);
\draw (-1.8,-.3) arc (180:270:.4cm);
\draw (-1.4,-.7) -- (.6,-.7);
\draw (.6,-.7) arc (-90:0:.4cm);
\draw (-1.4,2.7) -- (.6,2.7);
\draw (.6,2.7) arc (90:0:.4cm);
\draw (1,2.3) -- (1,1.6);

\draw[ultra thick,->] (2,1) -- (3,1);

\draw (4,1) circle (.6cm);
\draw (5.5,1) circle (.6cm);
\draw (4.75,-1) node[below]{$\bigcirc \sqcup \bigcirc$};

\draw (-.5,-1) node[below]{$D$};

\end{scope}

\end{tikzpicture}$$
\caption{If $S=0$ and $\alpha_1= (-1)^v$, then $G$ and $D$ are as above, and $D$ is a diagram of the two component unlink. Hence the link is alternating, rather than almost alternating.}
\label{figure:S=0}
\end{figure}
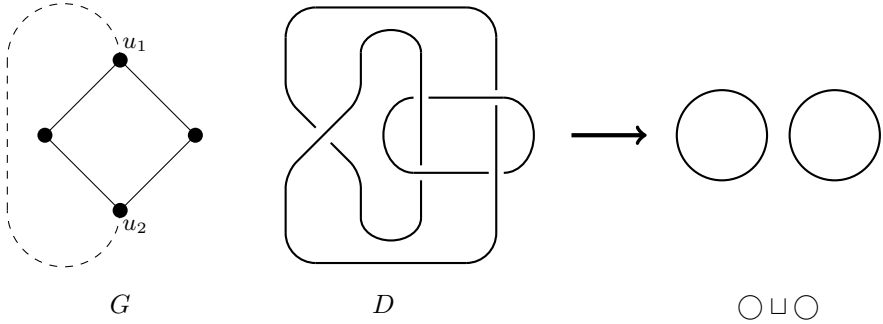

Now suppose that $S=1$. Then $\beta_1 \geq 3$. The quantity $\beta_1 -1 + P_2 - P_0 + Q -S$ achieves its minimum of $-1$ when $\beta_1=3, P_2=0, P_0=2$ and $Q=0$. In this case, $G'$ is $K_4$ and $G$ is the graph in Figure \ref{figure:S=1}. The diagram $D$ is a diagram of the $(2,k+1)$-torus link disjoint union with an unknot. Hence if $D$ is a diagram of an almost alternating link and $S=1$, then $\beta_1-1+P_2-P_0+Q-S\geq 0$.

\begin{figure}[h]
$$\begin{tikzpicture}

\draw (0.2,0) node[below]{\small{$u_2$}};
\draw (0.2,2) node[above]{\small{$u_1$}};
\draw (0,-1) node[below]{$G$};

\fill (0,0) circle (.1cm);
\fill (1,1) circle (.1cm);
\fill (-1,1) circle (.1cm);
\fill (0,2) circle (.1cm);
\draw (0,0) -- (1,1) -- (0,2) -- (-1,1) -- (0,0);
\draw (-1,1) -- (1,1);
\draw (0,1) node[above]{$k$};
\draw[dashed] (0,2) arc (0:180:.75cm);
\draw[dashed]  (0,0) arc (0:-180:.75cm);
\draw[dashed]  (-1.5,0) -- (-1.5,2);

\begin{scope}[xshift = 4cm, thick]

\draw (0,2) -- (0,2.1);
\draw (0,2.1) arc (0:180:.4cm and .3cm);
\draw (-.1,1.9) arc (90:270:.5cm and .9cm);
\draw (-.1,.1) -- (.9,.1);
\draw (0,0) -- (0,-.1);
\draw (0,-.1) arc (0:-180:.4cm and .3cm);
\draw (1,.2) -- (1,-.3);
\draw (1,.2) arc (0:90:.2cm);
\draw (.8,.4) arc (-90:-180:.2cm);
\draw (.1,1.9) -- (1.1,1.9);
\draw (0,.2) arc (180:90:.2cm);
\draw (.2,.4) arc (-90:0:.2cm);
\draw (1,1.8) arc (0:-90:.2cm);
\draw (.8,1.6) arc (90:180:.2cm);
\draw[rounded corners = 1mm] (.6,1.4) -- (.6,1.3) -- (.4,1.1);
\draw[rounded corners = 1mm] (.4,1.4) -- (.4,1.3) -- (.45,1.25);
\draw (.55,1.15) -- (.6,1.1);

\draw[rounded corners = 1mm] (.4,.6) -- (.4,.7) -- (.6,.9);
\draw[rounded corners = 1mm] (.6,.6) -- (.6,.7) -- (.55,.75);
\draw (.45,.85) -- (.4,.9);

\draw[decoration={brace,amplitude=2pt}, decorate] (.3,.6) -- (.3,1.4);
\draw (.3,1) node[left]{$k$};

\fill (.5,1) circle (.03cm);
\fill (.5,1.1) circle (.03cm);
\fill (.5,.9) circle (.03cm);

\draw (0,2.1) -- (0,1.8);
\draw (0,1.8) arc (180:270:.2cm);
\draw (.2,1.6) arc (90:0:.2cm);
\draw (1.1,.1) arc (-90:90:.5cm and .9cm);
\draw[rounded corners = 2mm] (-.8,2.1) -- (-.8,1.5) -- (-1.8,.5) -- (-1.8,-.3);
\draw[rounded corners = 2mm] (-.8,-.1) -- (-.8,.5) -- (-1.2, .9);
\draw[rounded corners = 2mm] (-1.4,1.1) -- (-1.8,1.5) -- (-1.8,2.3);
\draw (-1.8,2.3) arc (180:90:.4cm);
\draw (-1.8,-.3) arc (180:270:.4cm);
\draw (-1.4,-.7) -- (.6,-.7);
\draw (.6,-.7) arc (-90:0:.4cm);
\draw (-1.4,2.7) -- (.6,2.7);
\draw (.6,2.7) arc (90:0:.4cm);
\draw (1,2.3) -- (1,2);

\draw (-.5,-1) node[below]{$D$};

\end{scope}

\draw[ultra thick, ->] (5.8,1) -- (6.7,1);

\begin{scope}[xshift = 7.5 cm, thick]

\draw[rounded corners = 1mm] (0,0) -- (0,.2) -- (.5,.7);
\draw[rounded corners = 1mm] (.5,0) -- (.5,.2) -- (.3,.4);
\draw[rounded corners = 1mm] (.2,.5) -- (0,.7);
\draw[rounded corners = 1mm] (0,2) -- (0,1.8) -- (.5,1.3);
\draw[rounded corners = 1mm] (.5,2) -- (.5,1.8) -- (.3,1.6);
\draw (.2,1.5) -- (0,1.3);
\draw (0,0) arc (0:-180:.2cm);
\draw (0,2) arc (0:180:.2cm);
\draw (-.4,0) -- (-.4,2);
\draw (.5,2) arc (180:0:.2cm);
\draw (.5,0) arc (180:360:.2cm);
\draw (.9,0) -- (.9,2);

\fill (0.25,1) circle (.03cm);
\fill (0.25,1.1) circle (.03cm);
\fill (0.25,.9) circle (.03cm);

\draw (2,1) circle (.6cm);

\draw (1,-1) node[below]{$T_{2,k+1}\sqcup \bigcirc$};

\end{scope}

\end{tikzpicture}$$
\caption{If $S=1$ and $\alpha_1=(-1)^{v}$, then $G$ and $D$ are as above, and $D$ is a diagram of the disjoint union of the $(2,k+1)$ torus knot $T_{2,k+1}$ and the unknot. Hence the link is alternating, rather than almost alternating.}
\label{figure:S=1}
\end{figure}
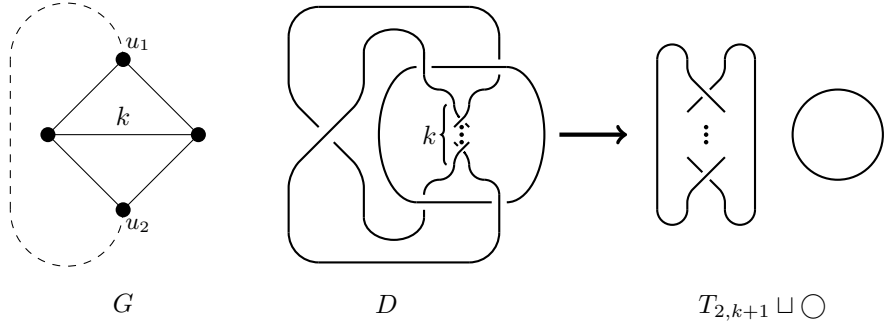

In both cases where $P=2$ and $S=0$ or $S=1$, we have
\begin{align*}
\alpha_0 \alpha_1 = & \; (-1)^{v}(-1)^{v-1}(\beta_1-1+P_2-P_0+Q-S)\\
=& \; (-1)(\beta_1-1+P_2-P_0+Q-S)\\
 \leq & \; (-1)(0)\\
 \leq & \; 0.
\end{align*}
If $\overline{P}\in \{0,2\}$, then a symmetric argument implies $\alpha_{c-4}\alpha_{c-3}\leq 0$. If $L$ is Turaev genus one, then Theorem \ref{theorem:mutant} implies that $L$ is mutant to an almost alternating link. It is a well-known fact that mutant links have the same Jones polynomial; see, for example, \cite{Lickorish:Book}. Hence, the result follows.
\end{proof}

Let $V_K(t) =a_0 t^{k} + a_1 t^{k+1} + \cdots + a_{n-1} t^{k+n-1}+a_n t^{k+n}$ for a knot $K$. Dasbach and Lowrance \cite{DasLow:TuraevJones} proved that if $K$ is almost alternating, then at least one of $|a_0|$ or $|a_n|$ is one, and Theorem \ref{theorem:Sign} gives a stronger obstruction for a knot to be almost alternating.  A computer search (using the tables from \cite{knotinfo} and the Jones polynomial program from \cite{knotatlas}) shows 1 knot with eleven crossings, 11 knots with twelve crossings, 70 knots with thirteen crossings, 526 knots with fourteen crossings, and 3,787 knots with fifteen crossings have Jones polynomials where neither the leading nor trailing coefficients have absolute value one. However, knots whose Jones polynomials have either a leading or trailing coefficient of absolute value one but fail to satisfy the conditions of Theorem \ref{theorem:Sign} are not as common in the knot tables. There are no such knots with fourteen or fewer crossings and 15 such knots with fifteen crossings. Example \ref{example:15n41133} shows one of the fifteen crossing examples.
\begin{example}
\label{example:15n41133}
The knot $K=$15n41133 of Figure \ref{figure:15n41133} has Jones polynomial 
$$V_K(t) = t^4+t^5-3t^6+8t^7-12t^8+14t^9-15t^{10}+13t^{11}-10t^{12} + 6t^{13} - 2t^{14}.$$
Thus Theorem \ref{theorem:Sign} implies that $K$ is not almost alternating and has Turaev genus at least two.
\end{example}

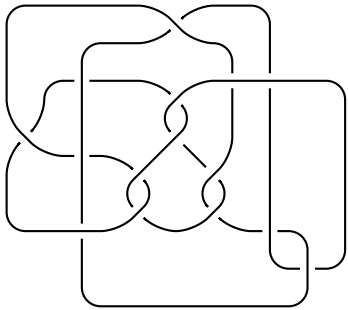
\begin{figure}[h]
$$\begin{tikzpicture}[thick, scale = .5]

\draw[rounded corners = 2.5mm] (5.7,6.3) -- (6,6) -- (4,4) -- (4.3,3.7);
\draw[rounded corners = 2.5mm] (5.3,5.7) -- (5,6) -- (6,7)  -- (10,7) -- (10,2) -- (9.2,2);
\draw (5.7,5.3) -- (6.3,4.7);
\draw[rounded corners = 2.5mm] (6.7,4.3) -- (7,4) -- (6,3) -- (5,3) -- (4.7,3.3);
\draw[rounded corners = 2.5mm] (4.7,4.3) -- (5,4) -- (4,3) -- (1,3) -- (1,5) -- (1.3,5.3);
\draw[rounded corners = 2.5mm] (4.3,4.7) -- (4,5) -- (3.2,5);
\draw[rounded corners = 2.5mm] (6.3,3.7) -- (6,4) -- (7,5) -- (7,6.8);
\draw[rounded corners = 2.5mm] (5.3,6.7) -- (5,7) -- (3.2,7);
\draw[rounded corners = 2.5mm] (3,3.2) -- (3,8) -- (5,8) -- (5.3,8.3);
\draw[rounded corners = 2.5mm] (2.8,5) -- (2,5) -- (1,6) -- (1,9) -- (5,9) -- (6,8) -- (7,8) -- (7,7.2);
\draw[rounded corners = 2.5mm] (5.7,8.7) -- (6,9) -- (8,9) -- (8,7.2);
\draw[rounded corners = 2.5mm] (2.8,7) -- (2,7) -- (2,6) -- (1.7,5.7);
\draw[rounded corners = 2.5mm] (8,6.8) -- (8,2) -- (8.8,2);
\draw[rounded corners = 2.5mm] (6.7,3.3) -- (7,3) -- (7.8,3);
\draw[rounded corners = 2.5mm] (8.2,3) -- (9,3) -- (9,1) -- (3,1) -- (3,2.8);

\end{tikzpicture}$$
\label{figure:15n41133}
\caption{The knot 15n41133.}
\end{figure}

\section{Non-triviality of the Jones polynomial}
\label{section:JonesUnknot}

In this section we prove that the Jones polynomial of an almost alternating or Turaev genus one link is not equal to any unit times the Jones polynomial of an unlink. Theorems \ref{theorem:Sign} and \ref{theorem:AAJones} are the main tools used in the proof, but there are some exceptional cases not covered by those theorems. 

Throughout this section, $D$ is a strongly reduced almost alternating diagram with the fewest number of crossings among all almost alternating diagrams of the link. Furthermore, we assume that $D$ is a prime diagram, i.e. there is no simple closed curve $\gamma$ meeting $D$ exactly twice away from the crossings such that both the interior and exterior of $\gamma$ contain crossings. If $D$ is not prime, then one of its factors is alternating. Since the Jones polynomial is multiplicative under connected sum and the Jones polynomial of a nontrivial alternating link is nontrivial, it suffices to consider prime almost alternating diagrams. Since $D$ is a prime diagram, it follows that $G$ and $\overline{G}$ are two-connected, i.e. contain no cut vertices. We adopt the notation of Theorem \ref{theorem:AAJones} and also use the diagrammatic notation of Figure \ref{figure:Shorthand}.
\begin{figure}[h]
$$\begin{tikzpicture}[thick]

\fill (0,0) circle (.1cm);
\fill (1,0) circle (.1cm);
\draw (.5,0) node[above]{$a$};
\draw (0,0) -- (1,0);
\draw (.5,-2) node{in $G$ or $\overline{G}$};


\draw (3,-1) rectangle (4,1);
\draw (3.5,0) node{$a$};
\draw (3.25,1) -- (3.25,1.25);
\draw (3.75,1) -- (3.75,1.25);
\draw (3.25,-1) -- (3.25,-1.25);
\draw (3.75,-1) -- (3.75,-1.25);
\draw (3.25,.8) node{$-$};
\draw (3.75,.8) node{$+$};
\draw (3.25,-.8) node{$+$};
\draw (3.75,-.8) node{$-$};
\draw (3.5,-2) node{schematic in $D$};


\draw[rounded corners = 1mm] (6.25,-1.25) --  (6.25,-1) -- (6.75, -.5);
\draw[rounded corners = 1mm] (6.75,-1.25) --  (6.75, -1) -- (6.6,-.85);
\draw (6.4,-.65) -- (6.25,-.5);

\draw[decoration={brace,amplitude=5pt}, decorate] (5.75,-1) -- (5.75,1);
\draw (5.6,0) node[left]{$a$};

\begin{scope}[yshift = 1.5cm]
\draw[rounded corners = 1mm] (6.25,-1) -- (6.75, -.5) -- (6.75,-.25);
\draw[rounded corners = 1mm] (6.75, -1) -- (6.6,-.85);
\draw[rounded corners = 1mm] (6.4,-.65) -- (6.25,-.5) -- (6.25,-.25);
\end{scope}

\fill (6.5,0) circle (.05cm);
\fill (6.5,.2) circle (.05cm);
\fill (6.5,-.2) circle (.05cm);

\draw (6.5,-2) node{in $D$};

\begin{scope}[yshift = -4cm, xshift = 1.5cm]

\begin{scope}[xshift = -3cm]
	\draw (0.5,0) circle (.5cm);
	\draw (.5,0) node{$\vec{a}$};
	\draw (.5,.5) -- (.5,1);
	\draw (.5,-.5) -- (.5,-1);
	\fill (.5,1) circle (.1cm);
	\fill (.5,-1) circle (-.1cm);
	
	\draw (.5,-2) node{schematic in};
	\draw (.5,-2.4) node{$G$ or $\overline{G}$};
	
\end{scope}
	\draw (0,.7) -- (.5,0) -- (1,.7);
	\draw (0,-.7) -- (.5,0) -- (1,-.7);
	\fill[white] (0.5,0) circle (.5cm);
	\draw (0.5,0) circle (.5cm);
	\draw (.5,0) node{$\vec{a}$};
	\draw (.3,.3) node{\small{$-$}};
	\draw (.7,.3) node{\small{$+$}};
	\draw (.3,-.3) node{\small{$+$}};
	\draw (.7,-.3) node{\small{$-$}};
	
	\draw (.5,-2) node{schematic in $D$};

	\fill (3.5,1) circle (.1cm);
	\fill (3.5,.4) circle (.1cm);
	\fill (3.5,-.4) circle (.1cm);
	\fill (3.5,-1) circle (.1cm);
	\draw (3.5,1) -- (3.5,.4);
	\draw (3.5,-.4) -- (3.5,-1);
	\fill (3.5,0) circle (.05cm);
	\fill (3.5,-.2) circle (.05cm);
	\fill (3.5,.2) circle (.05cm);
	\draw (3.5,.7) node[left]{$a_1$};
	\draw (3.5,-.7) node[left]{$a_k$};
	
	\draw (3.5,-2) node{in $G$ or $\overline{G}$};
	

	\draw[rounded corners = 1mm] (6.75,1.25) -- (6.75,1) -- (7,1) -- (7,.8) -- (6.7,.5);
	\draw[rounded corners = 1mm] (7,.4) --  (7,.5) -- (6.9,.6);
	\draw (6.8,.7) -- (6.7,.8);
	\draw[rounded corners = 1mm] (6.25,1.25) -- (6.25,1) -- (6,1) -- (6,.8) -- (6.1,.7);
	\draw (6.2,.6) -- (6.3,.5);
	\draw[rounded corners = 1mm] (6,.4) --  (6,.5) -- (6.3,.8);
	\fill (6.5,.65) circle (.03cm);
	\fill (6.4,.65) circle (.03cm);
	\fill (6.6, .65) circle (.03cm);
	
	\draw[decoration={brace,amplitude=2pt}, decorate] (6.2,.85) -- (6.8,.85);
	\draw (6.5,1.1) node{\small{$a_1$}};
	
	\begin{scope}[yscale = -1, xscale = -1, xshift = -13cm]
	
	\draw[rounded corners = 1mm] (6.75,1.25) -- (6.75,1) -- (7,1) -- (7,.8) -- (6.7,.5);
	\draw[rounded corners = 1mm] (7,.4) --  (7,.5) -- (6.9,.6);
	\draw (6.8,.7) -- (6.7,.8);
	\draw[rounded corners = 1mm] (6.25,1.25) -- (6.25,1) -- (6,1) -- (6,.8) -- (6.1,.7);
	\draw (6.2,.6) -- (6.3,.5);
	\draw[rounded corners = 1mm] (6,.4) --  (6,.5) -- (6.3,.8);
	\fill (6.5,.65) circle (.03cm);
	\fill (6.4,.65) circle (.03cm);
	\fill (6.6, .65) circle (.03cm);
	\end{scope}
	
	\begin{scope}[yscale = -1]
	\draw[decoration={brace,amplitude=2pt,mirror}, decorate] (6.2,.85) -- (6.8,.85);
	\draw (6.5,1.1) node{\small{$a_k$}};
	\end{scope}

	\fill (6.5,0) circle (.05cm);
	\fill (6.5,.2) circle (.05cm);
	\fill (6.5,-.2) circle (.05cm);
	\draw[decoration={brace,amplitude=5pt}, decorate] (5.75,-1) -- (5.75,1);
	\draw (5.6,0) node[left]{$k$};
 
	\draw (6.5,-2) node{in $D$};

\end{scope}

\end{tikzpicture}$$
\caption{\textbf{Top.} A multiple edge in $G$ corresponds to a twist region in $D$. We use the shorthand of a box labeled with the number of crossings in the twist region. \textbf{Bottom.} A circle labeled with $\vec{a} = (a_1,\dots, a_k)$ corresponds to a series of $k$ edges in $G'$ or $\overline{G}'$ and to the depicted alternating tangle in $D$.}
\label{figure:Shorthand}
\end{figure}
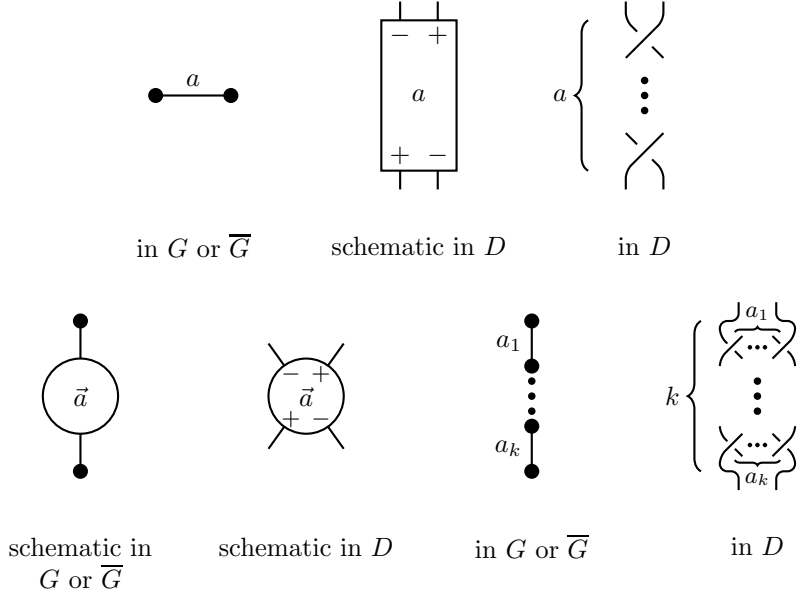

Let $\Gamma$ be a connected graph, and suppose that $\Gamma$ has two vertices $w_1$ and $w_2$ such that $\Gamma - \{w_1,w_2\}$ is disconnected. Then $\Gamma$ can be expressed as the union of two connected subgraphs $\Gamma=\Gamma_1\cup\Gamma_2$ where $\Gamma_1\cap\Gamma_2=\{w_1,w_2\}$. Consider $\Gamma_1$ and $\Gamma_2$ as distinct graphs. Temporarily let $w^1_1$ and $w^1_2$ be the copies of $w_1$ and $w_2$ in $\Gamma_1$, and let $w^2_1$ and $w^2_2$ be the copies of $w_1$ and $w_2$ in $\Gamma_2$. Define $\Gamma'$ be the graph obtained by gluing together $w^1_1$ and $w^2_2$ and by gluing together $w^2_1$ and $w^1_2$. The operation described above is called a {\em $2$-isomorphism}, and any two graphs related by a sequence of $2$-isomorphisms are said to be {\em $2$-isomorphic}. 

A $2$-isomorphism on the checkerboard graph of a link diagram corresponds to a mutation of the link, and mutation does not affect the Jones polynomial. In our setting, we will use $2$-isomorphisms to permute the labels along a path in $G$ or $\overline{G}$, which will decrease the number of cases we need to consider in Lemmas \ref{lemma:F1-3} and \ref{lemma:F4-7} below.

A link diagram is {\em $A$-adequate} (respectively {\em $B$-adequate}) if no trace in its all-$A$ (respectively all-$B$) Kauffman state has both of its endpoints on the same component of the Kauffman state. A link that has a diagram that is either $A$-adequate or $B$-adequate is called {\em semi-adequate}. Stoimenow \cite{Stoimenow:Semi} proved the following theorem about the Jones polynomial of a semi-adequate link.
\begin{theorem}[Stoimenow]
\label{theorem:Semi-adequate}
Let $L$ be a semi-adequate link. If 
$$V_L(t) = t^k \left(-t^{\frac{1}{2}}-t^{-\frac{1}{2}}\right)^{\ell-1}$$
for some $k\in\mathbb{Z}$, then $k=0$ and $L$ is the $\ell$-component unlink.
\end{theorem}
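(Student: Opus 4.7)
The approach is to use the sharp control on extremal coefficients of the Jones polynomial available for semi-adequate diagrams, combined with a finer analysis of the next several coefficients, to force a highly restrictive structure on a defining diagram. Without loss of generality, assume $L$ has an $A$-adequate diagram $D$: the $B$-adequate case follows by mirroring $L$ and replacing $k$ with $-k$. The first step is the standard Lickorish--Thistlethwaite-type state sum computation from \eqref{equation:StateSum}: the all-$A$ state contributes its highest-$A$-degree term $(-1)^{s_A(D)-1}A^{c(D)+2s_A(D)-2}$, and $A$-adequacy forces every state with at least one $B$-resolution to top out at a strictly smaller $A$-degree, because each single-$B$-resolution state has exactly $s_A(D)-1$ components. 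Consequently, one extremal coefficient of $V_L(t)$ is $\pm 1$, matching the extremal coefficient $(-1)^{\ell-1}$ of $t^k(-t^{1/2}-t^{-1/2})^{\ell-1}$.

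The second step is to derive Dasbach--Lin style formulas, in the spirit of Theorem~\ref{theorem:DasLin}, for the subleading coefficients of $\langle D \rangle$ in terms of combinatorial invariants of the simplified all-$A$ state graph $G_A'$: its vertex count, edge count, cycle rank $\beta_1(G_A')$, multi-edge count, and triangle count. The subtlety compared to the alternating case is that states with two or more $B$-resolutions can also contribute to subleading coefficients when their traces exploit multi-edges of $G_A$; under $A$-adequacy these corrections are tractable and combine into clean formulas. Each such formula then gets matched against the corresponding binomial coefficient $(-1)^{\ell-1}\binom{\ell-1}{i}$ of the hypothesized polynomial, producing a cascade of numerical identities that constrain $G_A'$ more and more tightly.

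The main obstacle, and the heart of the argument, is to upgrade these coefficient identities into the topological conclusion that $L$ is the $\ell$-unlink. The proposed route is to argue that any nontrivial combinatorial feature of $G_A'$ beyond what is rigidly forced by the target binomial pattern --- an interior vertex, a cycle of length $\geq 3$, or an unneeded multi-edge --- would perturb some intermediate coefficient away from the exact value $(-1)^{\ell-1}\binom{\ell-1}{i}$, a contradiction. Hence $G_A'$ must be combinatorially trivial, and together with $A$-adequacy this forces $D$ to reduce by Reidemeister~1 and~2 moves to a crossingless diagram of $\ell$ disjoint unknots. Since the $\ell$-unlink has Jones polynomial exactly $(-t^{1/2}-t^{-1/2})^{\ell-1}$, the conclusion $k=0$ follows automatically. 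The technical difficulty lies in controlling the middle coefficients, where many Kauffman states conspire; handling these cleanly is the essence of Stoimenow's semi-adequate refinement of the arguments underlying Theorems~\ref{theorem:DasLin} and~\ref{theorem:AAJones}.
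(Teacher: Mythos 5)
This statement is quoted in the paper as an external result of Stoimenow \cite{Stoimenow:Semi}; the paper supplies no proof of it, so there is no internal argument to compare against, and your proposal must stand on its own. It does not: the central step is missing. Your first step (the coefficient at the $A$-adequate extreme is $\pm 1$) is correct, and your second step is partially available, since the analogues of $\gamma_0,\gamma_1,\gamma_2$ from Theorem \ref{theorem:DasLin} do extend to $A$-adequate diagrams. But that is where the controllable coefficients end. One-sided adequacy yields formulas only for the two or three coefficients at the $A$-extreme end of $V_L$; it gives no control over the coefficients at the opposite end, nor over the intermediate coefficients, which receive contributions from Kauffman states with many $B$-resolutions that are not determined by the simplified state graph $G_A'$. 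So the ``cascade of numerical identities'' you want to match against $(-1)^{\ell-1}\binom{\ell-1}{i}$ for all $i$ does not exist, and the pivotal claim --- that any interior vertex, cycle of length $\geq 3$, or extra multi-edge of $G_A'$ would perturb some intermediate coefficient --- is precisely the content of the theorem, asserted rather than proved. Your closing sentence concedes as much; naming the hard step is not the same as carrying it out.

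Two further points. For $\ell\geq 2$ the theorem follows quickly from the machinery you already set up, and your sketch misses the shortcut: the product of the first two coefficients of $t^k\left(-t^{\frac{1}{2}}-t^{-\frac{1}{2}}\right)^{\ell-1}$ is $\binom{\ell-1}{0}\binom{\ell-1}{1}=\ell-1>0$, while $A$-adequacy forces the product of the first two coefficients of $V_L$ to be $-(e'-v'+1)\leq 0$, an immediate contradiction. The genuinely hard case is $\ell=1$, where matching against the monomial $t^k$ only yields that $G_A'$ is a tree and that the third coefficient vanishes; deducing from this that the knot is the unknot requires the detailed structural analysis of diagrams with tree-like $A$-state graphs that occupies most of Stoimenow's paper, and none of it is supplied here. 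Even granting that $G_A'$ is ``combinatorially trivial,'' the final jump to ``$D$ reduces by Reidemeister 1 and 2 moves to a crossingless diagram'' is another unproved leap.
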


Before proving Theorem \ref{theorem:AANontrivial}, we need Lemmas \ref{lemma:F1-3} and \ref{lemma:F4-7}.
\begin{lemma}
\label{lemma:F1-3}
Let $D$ be a strongly reduced almost alternating diagram. Suppose that $\alpha_1=0$, $\alpha_{c-4}=0$, $P=1$, and $\overline{P}=0$. After possibly relabeling $v_1$ and $v_2$, the checkerboard graph $\overline{G}$ is $2$-isomorphic to one of the three families in Figure \ref{figure:F1-3}. Moreover, every link whose checkerboard graph is in any of these families is semi-adequate.
\end{lemma}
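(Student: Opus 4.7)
The hypotheses $P=1$ and $\overline{P}=0$, together with $\alpha_1=\alpha_{c-4}=0$, reduce via Theorem \ref{theorem:AAJones} to the two algebraic identities
\begin{equation*}
P_2-P_0+Q-S=0 \quad\text{and}\quad \overline{Q}=\overline{\beta}_1.
\end{equation*}
My plan is to extract rigid structural information from each, and then enumerate the resulting possibilities up to $2$-isomorphism.

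On the $G$ side, $P=1$ says there is exactly one vertex $u_3$ adjacent to both $u_1$ and $u_2$ in $G'$. Any $K_4$ in $G'$ containing $u_1$ and $u_2$ would force at least two common neighbors of $u_1,u_2$, so $S=0$, and the first identity collapses to $P_2-P_0+Q=0$. Since $P_0+P_1+P_2=1$, only the subcases $(P_0,P_1,P_2,Q)=(1,0,0,1)$ and $(0,1,0,0)$ survive, giving two local pictures for $G$ near $u_1,u_2,u_3$.

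On the $\overline{G}$ side, because $\overline{P}=0$, no vertex is adjacent to both $v_1$ and $v_2$, so the interior-vertex restriction in the definition of $\overline{Q}$ is automatic and $\overline{Q}$ simply counts paths of length three from $v_1$ to $v_2$ in $\overline{G}'$. By Lemma \ref{lemma:Dual} the equality $\overline{Q}=\overline{\beta}_1$ is extremal, which I would use (together with the argument in Lemma \ref{lemma:Dual}'s proof that each such path contributes an independent cycle through the dealternator) to force the cycles obtained from these length-three paths to span the full cycle space of $\overline{G}'$. Combined with the $2$-connectedness of $\overline{G}'$ inherited from primeness of $D$, this means $\overline{G}'$ minus the dealternator edge is a union of paths of length three from $v_1$ to $v_2$, sharing only endpoints or interior vertices, with no extraneous edges or cycles.

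The main obstacle is the resulting case analysis: enumerate, up to $2$-isomorphism, how these length-three paths may be assembled so as to (i) embed planarly as a checkerboard graph, (ii) be compatible with the two $G$-side subcases via planar duality, and (iii) produce no vertex adjacent to both $v_1$ and $v_2$. Each $2$-isomorphism corresponds to a mutation of $L$, preserving both the almost alternating property and the Jones polynomial, and so may be used freely to permute edge-multiplicity labels along a path. I expect exactly three normal forms to survive, yielding the three families in Figure \ref{figure:F1-3}.

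Finally, to verify semi-adequacy I would examine the all-$B$ Kauffman state for a representative diagram of each family. The state circles of the all-$B$ state correspond to regions determined by $\overline{G}'$, modified locally at the dealternator according to its resolution. Because each family consists of a small, explicit planar graph with prescribed edge labels, we can inspect the state circles directly and check that no trace has both endpoints on the same circle, establishing $B$-adequacy (or, if more natural in a given family, $A$-adequacy). Since the three families exhaust the possibilities and each is semi-adequate, the lemma follows.
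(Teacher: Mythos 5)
Your algebraic setup is correct and matches the paper exactly: from $P=1$ you rightly get $S=0$ and hence $P_2-P_0+Q=0$ with the two surviving subcases $(P_0,P_1,P_2,Q)=(1,0,0,1)$ or $(0,1,0,0)$, and from $\overline{P}=0$ you get $\overline{P}_0=\overline{P}_2=\overline{S}=0$ and $\overline{Q}=\overline{\beta}_1$. The structural reading of $\overline{Q}=\overline{\beta}_1$ --- that $\overline{G}'$ is built entirely from length-three paths between $v_1$ and $v_2$ attached to the dealternator edge --- is also the paper's strategy (phrased there as ``type 1'' and ``type 2'' path additions to an initial two-vertex graph). One technical caveat: having $\overline{Q}=\overline{\beta}_1$ four-cycles does not by itself force them to span the cycle space; you need them to be independent, which is why the paper argues constructively (each addition raises $\overline{\beta}_1$ by one and only a length-three path addition raises $\overline{Q}$) rather than by a dimension count.

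The genuine gap is that the conclusion of the lemma is never actually derived. The lemma asserts a specific list of three labeled families, and your proposal stops at ``I expect exactly three normal forms to survive'' and ``I would examine the all-$B$ state.'' The content that produces Figure \ref{figure:F1-3} is precisely the duality bookkeeping you list as constraints (i)--(iii) but do not execute: the number of paths with two new interior vertices is at most two because a third would destroy $P=1$ in the dual; when there are two such paths, the dual vertex between them is the unique common neighbor of $u_1,u_2$, forcing $Q=0$, $P_1=1$, and hence two specific edges of multiplicity one on one path (Family 1); when there is one, the placement of the additional one-vertex paths at $v_3$ only, or at both $v_3$ and $v_4$, together with the forced multiplicity-one labels coming from $P_0=Q=1$, yields Families 2 and 3. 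Without carrying this out, you have not shown the list is exhaustive or that the labels are as claimed, and without actually exhibiting the all-$B$ states (the paper's Figure \ref{figure:F123}) you have not established semi-adequacy. The plan is the right one, but as written it is a reduction to the hard part of the lemma rather than a proof of it.
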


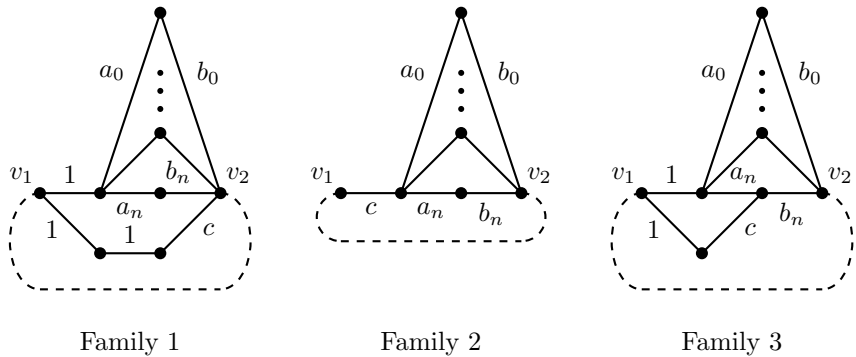
\begin{figure}[h]
$$\begin{tikzpicture}[thick, scale = .8]

\begin{scope}[xshift = 5cm]

	\fill (0,0) circle (.1cm);
	\fill (1,0) circle (.1cm);
	\fill (2,0) circle (.1cm);
	\fill (3,0) circle (.1cm);
	\fill (2,1) circle (.1cm);
	\fill (2,3) circle (.1cm);
	\draw (-.3,.3) node{$v_1$};
	\draw (3.3,0.3) node{$v_2$};
	
	\fill (2,1.4) circle (.05cm);
	\fill (2,1.7) circle (.05cm);
	\fill (2,2) circle (.05cm);
	
	\draw (0,0) -- (3,0) -- (2,3) -- (1,0);
	\draw (1,0) -- (2,1) -- (3,0);
	\draw[dashed] (0,0) arc (90:270:.4cm);
	\draw[dashed] (0,-.8) -- (3,-.8);
	\draw[dashed] (3,0) arc (90:-90:.4cm);
	\draw (.5,0) node[below]{$c$};
	\draw (1.5,0) node[below]{$a_n$};
	\draw (2.5,0) node[below]{$b_n$};
	\draw (1.2,2) node{$a_0$};
	\draw (2.8,2) node{$b_0$};

	\draw (1.5,-2.5) node{Family 2};
	
\end{scope}
	

	\fill (0,0) circle (.1cm);
	\fill (1,0) circle (.1cm);
	\fill (2,0) circle (.1cm);
	\fill (3,0) circle (.1cm);
	\fill (2,1) circle (.1cm);
	\fill (2,3) circle (.1cm);
	\fill (1,-1) circle (.1cm);
	\fill (2,-1) circle (.1cm);
	\draw (-.3,.3) node{$v_1$};
	\draw (3.3,0.3) node{$v_2$};
	
	\fill (2,1.4) circle (.05cm);
	\fill (2,1.7) circle (.05cm);
	\fill (2,2) circle (.05cm);

	\draw (0,0) -- (3,0) -- (2,3) -- (1,0);
	\draw (1,0) -- (2,1) -- (3,0);
	\draw (0,0) -- (1,-1) -- (2,-1) -- (3,0);
	\draw[dashed] (0,0) arc (90:270:.5cm and .8cm);
	\draw[dashed] (3,0) arc (90:-90:.5cm and .8cm);
	\draw[dashed] (0,-1.6) -- (3,-1.6);
	\draw (.5,0) node[above]{$1$};
	\draw (.2,-.6) node{$1$};
	\draw (1.5,-1)node[above]{$1$};
	\draw (2.8,-.6) node{$c$};
	\draw (1.5,0) node[below]{$a_n$};
	\draw (2.3,0) node[above]{$b_n$};
	\draw (1.2,2) node{$a_0$};
	\draw (2.8,2) node{$b_0$};
	
	\draw (1.5,-2.5) node{Family 1};

\begin{scope}[xshift = 10cm]

	\fill (0,0) circle (.1cm);
	\fill (1,0) circle (.1cm);
	\fill (2,0) circle (.1cm);
	\fill (3,0) circle (.1cm);
	\fill (2,1) circle (.1cm);
	\fill (2,3) circle (.1cm);
	\fill (1,-1) circle (.1cm);
	\draw (-.3,.3) node{$v_1$};
	\draw (3.3,.3) node{$v_2$};
	
	\fill (2,1.4) circle (.05cm);
	\fill (2,1.7) circle (.05cm);
	\fill (2,2) circle (.05cm);

	\draw (0,0) -- (3,0) -- (2,3) -- (1,0);
	\draw (1,0) -- (2,1) -- (3,0);
	\draw (0,0) -- (1,-1) -- (2,0);
	\draw[dashed] (0,0) arc (90:270:.5cm and .8cm);
	\draw[dashed] (3,0) arc (90:-90:.5cm and .8cm);
	\draw[dashed] (0,-1.6) -- (3,-1.6);
	\draw (.5,0) node[above]{$1$};
	\draw (.2,-.6) node{$1$};
	\draw (1.7,0) node[above]{$a_n$};
	\draw (2.5,0) node[below]{$b_n$};
	\draw (1.2,2) node{$a_0$};
	\draw (2.8,2) node{$b_0$};
	\draw (1.8,-.6) node{$c$};

	\draw (1.5,-2.5) node{Family 3};

\end{scope}

\end{tikzpicture}$$
\caption{Three families of $\overline{G}$ when $\alpha_1=0$, $\alpha_{c-4}=0$, $P=1$, and $\overline{P}=0$.}
\label{figure:F1-3}
\end{figure}

\begin{proof}
We begin the proof by making observations that will apply to all three families. Since $\overline{P}=0$, there are no paths of length two between $v_1$ and $v_2$ in $\overline{G}'$, and thus $\overline{P}_2 =\overline{P}_0=\overline{S}=0$. Theorem \ref{theorem:AAJones} implies that
$$\alpha_{c-4} = (-1)^{\overline{v}-1}\left( \overline{Q}-\overline{\beta}_1\right),$$
 and because $\alpha_{c-4}=0$, it follows that
$\overline{Q} = \overline{\beta}_1$. Furthermore, since the number of paths $P$ of length two between $u_1$ and $u_2$ in $G'$ is one, it follows that there are no $K_4$ subgraphs of $G'$ containing $u_1$ and $u_2$, and hence $S=0$. Theorem \ref{theorem:AAJones} implies that $\alpha_1 =  (-1)^{v-1}(P_2- P_0+Q).$ Because $\alpha_1=0$, we have $P_0 = P_2 + Q$. Since $P_i$ is the number of paths of length two in $G'$ with $i$ edges coming from multiple edges in $G$, it follows that $P_0+P_2\leq P = 1$. Therefore either $P_0=P_2=Q=0$ or $P_0=Q=1$ and $P_2=0$. 

We construct all graphs satisfying the specified conditions from an initial graph that contains only the vertices $v_1$ and $v_2$ and a single edge incident to both vertices. The edge is associated to the dealternator. From the upcoming construction, it will be clear that all of the graphs will be $2$-isomorphic to a graph in one of Family 1, 2, or 3, where for Family 1, the equation $P_0=P_2=Q=0$ is satisfied, and for Families 2 and 3, either set of equations ($P_0=P_2=Q=0$, or $P_0=Q=1$ and $P_2=0$) can be satisfied, depending on the parameters. For the initial graph, we have $\overline{Q}=\overline{\beta}_1 = 0$. Each time a path of length three between $v_1$ and $v_2$ is added to $\overline{G}'$, both $\overline{Q}$ and $\overline{\beta}_1$ increase by one. If a path of any length is added between any two existing vertices is added to $\overline{G}'$, then $\overline{\beta}_1$ increases by one. Therefore, every graph satisfying $\alpha_{c-4}=0$ and $\overline{P}=0$ can be obtained from our initial graph by adding paths of length three between $v_1$ and $v_2$. We consider two types of path additions. A type 1 path addition adds a path between $v_1$ and $v_2$ where both interior vertices in the new path do not exist in the previous graph, and a type 2 path addition adds a path between $v_1$ and $v_2$ where exactly one interior vertex in the new path does not exist in the previous graph. 

Any graph $\overline{G}'$ where $\overline{Q}=\overline{\beta}_1$ can be obtained from our initial graph by first performing some number of type 1 additions, then performing some number of type 2 additions. If more than two type 1 additions are performed, then there cannot be a path of length two between $u_1$ and $u_2$ in the dual graph. Hence the number of type 1 additions is one or two. Suppose the number of type 1 additions is two, and let $u_3$ be the vertex in the dual graph corresponding to the face between the two type 1 paths. Since $P=1$, the vertex $u_3$ is adjacent to both $u_1$ and $u_2$. Moreover, every path between $u_1$ and $u_2$ must contain the vertex $u_3$. Hence $Q=0$, and thus $P_0=P_2=0$ while $P_1=1$. Therefore two of the edges, say $e_1$ and $e_2$, along one of the type 1 path additions must be labeled 1. A different choice for the two edges labeled $1$ yields a $2$-isomorphic graph. Moreover, no type 2 path additions can use any interior vertex incident to either $e_1$ or $e_2$. Thus all type 2 additions must be performed along the existing path between $v_1$ and $v_2$ that does not contain $e_1$ and $e_2$. In order to ensure $P=1$, all such type 2 additions must share an edge labeled 1. The resulting family of graphs is Family 1.

Now suppose there is only one type 1 path addition to obtain $\overline{G}'$. Label the interior vertices of this path $v_3$ and $v_4$ with $v_3$ adjacent to $v_1$ and $v_4$ adjacent to $v_2$. One can perform an arbitrary number of type 2 path additions using vertex $v_3$ but not vertex $v_4$, and as long as the edge incident to $v_3$ and $v_1$ is suitably labeled, the condition $P=1$ can still be satisfied. The resulting family of graphs is Family 2.

Suppose one performs a type 2 path addition using vertex $v_3$ and another type 2 path addition using vertex $v_4$. In order to keep $P=1$, all the remaining type 2 path additions must use either $v_3$ or $v_4$, but not both.  In order to ensure $P=1$, at least one edge incident to either $v_1$ or $v_2$ in the original type 1 path must be labeled 1, and at least one edge in the solitary type 2 addition must be labeled 1. As before, a different choice of edge to label 1 results in a 2-isomorphic graph. The resulting family of graphs is Family 3.

Figure \ref{figure:F123} shows that all the links whose checkerboard graphs are in the three families of Figure \ref{figure:F1-3} are $B$-adequate and hence semi-adequate. In each case, the all-$B$ state is drawn.

\end{proof}

\begin{figure}[h]
$$\begin{tikzpicture}[thick, scale = .54, yscale = .8]

\begin{scope}[yshift = -13cm]

	\draw (0,0) rectangle (1,1.5);
	\draw (.5,.75) node{\tiny{$c$}};
	\draw (.2,-.2) node[above]{\tiny{$-$}};
	\draw (.8,-.2) node[above]{\tiny{$+$}};
	\draw (.8,1.7) node[below]{\tiny{$-$}};
	\draw (.2,1.7) node[below]{\tiny{$+$}};
	
	\begin{scope}[xshift = 3cm]
	\draw (0,0) rectangle (1,1.5);
	\draw (.5,.75) node{\tiny{$a_n$}};
	\draw (.2,-.2) node[above]{\tiny{$-$}};
	\draw (.8,-.2) node[above]{\tiny{$+$}};
	\draw (.8,1.7) node[below]{\tiny{$-$}};
	\draw (.2,1.7) node[below]{\tiny{$+$}};
	\end{scope}
	
	\begin{scope}[xshift = 5cm]
	\draw (0,0) rectangle (1,1.5);
	\draw (.5,.75) node{\tiny{$b_n$}};
	\draw (.2,-.2) node[above]{\tiny{$-$}};
	\draw (.8,-.2) node[above]{\tiny{$+$}};
	\draw (.8,1.7) node[below]{\tiny{$-$}};
	\draw (.2,1.7) node[below]{\tiny{$+$}};
	\end{scope}
	
	\begin{scope}[xshift = 3cm, yshift = 4.5cm]
	\draw (0,0) rectangle (1,1.5);
	\draw (.5,.75) node{\tiny{$a_0$}};
	\draw (.2,-.2) node[above]{\tiny{$-$}};
	\draw (.8,-.2) node[above]{\tiny{$+$}};
	\draw (.8,1.7) node[below]{\tiny{$-$}};
	\draw (.2,1.7) node[below]{\tiny{$+$}};
	\end{scope}
	
	\begin{scope}[xshift = 5cm, yshift = 4.5cm]
	\draw (0,0) rectangle (1,1.5);
	\draw (.5,.75) node{\tiny{$b_0$}};
	\draw (.2,-.2) node[above]{\tiny{$-$}};
	\draw (.8,-.2) node[above]{\tiny{$+$}};
	\draw (.8,1.7) node[below]{\tiny{$-$}};
	\draw (.2,1.7) node[below]{\tiny{$+$}};
	\end{scope}
	
	\draw (5.2,1.5) arc (0:180:.7cm and .5cm);
	\draw (5.2,0) arc (0:-180:.7cm and .5cm);
	\draw (5.2,6) arc (0:180:.7cm and .5cm);
	\draw (5.2,4.5) arc (0:-180:.7cm and .5cm);
	\draw (3.2,1.5) -- (3.2,2.5);
	\draw (5.8,1.5) -- (5.8,2.5);
	\draw (3.2,4.5) -- (3.2,3.5);
	\draw (5.8,4.5) -- (5.8,3.5);
	\fill (4.5, 3) circle (.05cm);
	\fill (4.5,2.8) circle (.05cm);
	\fill (4.5,3.2) circle (.05cm);
	\draw (3.2,0) arc (0:-90:.5cm);
	\draw (.8,0) arc (180:270:.5cm);
	\draw (1.3,-.5) -- (2.7,-.5);
	\draw (.2,0) arc (0:-180:.5cm);
	\draw (.2,1.5) arc (0:180:.5cm);
	\draw [rounded corners = 1.5mm] (-.8,1.5) -- (-.8,1.25) -- (-1.8,.25) -- (-1.8,-.5);
	\draw [rounded corners = 1.5mm] (-.8,0) -- (-.8,.25) -- (-1.15,.6);
	\draw (-1.8,-.5) arc (180:270:.5cm);
	\draw (5.8,0) -- (5.8,-.5);
	\draw (5.8,-.5) arc (0:-90:.5cm);
	\draw (-1.3,-1) -- (5.3,-1);
	\draw (.8,1.5) -- (.8,6);
	\draw (.8,6) arc (180:90:.5cm);
	\draw (3.2,6) arc (0:90:.5cm);
	\draw (1.3,6.5) -- (2.7,6.5);
	\draw [rounded corners = 1.5mm] (-1.45,.9) -- (-1.8,1.25) -- (-1.8,6.5);
	\draw (-1.8,6.5) arc (180:90:.5cm);
	\draw (5.8,6) -- (5.8,6.5);
	\draw (5.8,6.5) arc(0:90:.5cm);
	\draw (-1.3,7) -- (5.3,7);


	\draw[ultra thick, ->] (7,3) -- (8,3);
	\draw (7.5,-2) node{Family 2};

\begin{scope}[xshift = 10cm]

	\draw (0,0) rectangle (1,1.5);
	\draw (.5,.75) node{\tiny{$c{-}1$}};
	\draw (.2,-.2) node[above]{\tiny{$-$}};
	\draw (.8,-.2) node[above]{\tiny{$+$}};
	\draw (.8,1.7) node[below]{\tiny{$-$}};
	\draw (.2,1.7) node[below]{\tiny{$+$}};
	
	\begin{scope}[xshift = 3cm]
	\draw (0,0) rectangle (1,1.5);
	\draw (.5,.75) node{\tiny{$a_n$}};
	\draw (.2,-.2) node[above]{\tiny{$-$}};
	\draw (.8,-.2) node[above]{\tiny{$+$}};
	\draw (.8,1.7) node[below]{\tiny{$-$}};
	\draw (.2,1.7) node[below]{\tiny{$+$}};
	\end{scope}
	
	\begin{scope}[xshift = 5cm]
	\draw (0,0) rectangle (1,1.5);
	\draw (.5,.75) node{\tiny{$b_n$}};
	\draw (.2,-.2) node[above]{\tiny{$-$}};
	\draw (.8,-.2) node[above]{\tiny{$+$}};
	\draw (.8,1.7) node[below]{\tiny{$-$}};
	\draw (.2,1.7) node[below]{\tiny{$+$}};
	\end{scope}
	
	\begin{scope}[xshift = 3cm, yshift = 4.5cm]
	\draw (0,0) rectangle (1,1.5);
	\draw (.5,.75) node{\tiny{$a_0$}};
	\draw (.2,-.2) node[above]{\tiny{$-$}};
	\draw (.8,-.2) node[above]{\tiny{$+$}};
	\draw (.8,1.7) node[below]{\tiny{$-$}};
	\draw (.2,1.7) node[below]{\tiny{$+$}};
	\end{scope}
	
	\begin{scope}[xshift = 5cm, yshift = 4.5cm]
	\draw (0,0) rectangle (1,1.5);
	\draw (.5,.75) node{\tiny{$b_0$}};
	\draw (.2,-.2) node[above]{\tiny{$-$}};
	\draw (.8,-.2) node[above]{\tiny{$+$}};
	\draw (.8,1.7) node[below]{\tiny{$-$}};
	\draw (.2,1.7) node[below]{\tiny{$+$}};
	\end{scope}
	
	\draw (5.2,1.5) arc (0:180:.7cm and .5cm);
	\draw (5.2,0) arc (0:-180:.7cm and .5cm);
	\draw (5.2,6) arc (0:180:.7cm and .5cm);
	\draw (5.2,4.5) arc (0:-180:.7cm and .5cm);
	\draw (3.2,1.5) -- (3.2,2.5);
	\draw (5.8,1.5) -- (5.8,2.5);
	\draw (3.2,4.5) -- (3.2,3.5);
	\draw (5.8,4.5) -- (5.8,3.5);
	\fill (4.5, 3) circle (.05cm);
	\fill (4.5,2.8) circle (.05cm);
	\fill (4.5,3.2) circle (.05cm);
	\draw (3.2,0) arc (0:-90:.5cm);
	\draw (.8,0) arc (180:270:.5cm);
	\draw (1.3,-.5)  -- (1.6,-.5);
	\draw (2,-.5) -- (2.7,-.5);
	\draw (5.8,0) -- (5.8,-.5);
	\draw (5.8,-.5) arc (0:-90:.5cm);
	\draw (2.3,-1) -- (5.3,-1);
	\draw (.2,1.5) -- (.2,6);
	\draw (.2,6) arc (180:90:.5cm);
	\draw (3.2,6) arc (0:90:.5cm);
	\draw (.7,6.5) -- (2.7,6.5);
	\draw (5.8,6) -- (5.8,6.5);
	\draw (5.8,6.5) arc(0:90:.5cm);
	\draw (-.3,7) -- (5.3,7);
	\draw (.8,1.5) arc (180:0:.5cm);
	\draw (1.8,1.5) -- (1.8,-.5);
	\draw (1.8,-.5) arc (180:270:.5cm);
	\draw (.2,0) arc (0:-180:.5cm);
	\draw (-.8,0) -- (-.8,6.5);
	\draw (-.8,6.5) arc (180:90:.5cm);

	\begin{scope}[dashed, red, thick, rounded corners = 1mm]
	\draw (-.6, 1) -- (-.6,0) -- (-.2,0) -- (-.2,3) -- (.6,3.8) -- (.6,6) -- (2.8,6) -- (2.8,-.2) -- (2.1,-.5) -- (2.1,-.9) -- (4.4,-.9) -- (4.6,-1.1) -- (6.2,-1.1) -- (6.2,6) -- (5.4,6.8) -- (-.6,6.8) -- (-.6,1);
	\draw (1.2,.7) -- (1.2,-.3) -- (1.6,-.3) -- (1.6,1.6) -- (1.2,1.6) -- (1.2,.7);
	\draw (4.2,1) -- (4.2,-.2) -- (4.8,-.2) -- (4.8,1.7) -- (4.2,1.7) -- (4.2,1);
	\draw[yshift = 4.5cm] (4.2,1) -- (4.2,-.2) -- (4.8,-.2) -- (4.8,1.7) -- (4.2,1.7) -- (4.2,1);
	\end{scope}

\end{scope}

\end{scope}

\begin{scope}[yshift = 0cm]
	
	\begin{scope}[xshift = 3cm]
	\draw (0,0) rectangle (1,1.5);
	\draw (.5,.75) node{\tiny{$a_n$}};
	\draw (.2,-.2) node[above]{\tiny{$-$}};
	\draw (.8,-.2) node[above]{\tiny{$+$}};
	\draw (.8,1.7) node[below]{\tiny{$-$}};
	\draw (.2,1.7) node[below]{\tiny{$+$}};
	\end{scope}
	
	\begin{scope}[xshift = 5cm]
	\draw (0,0) rectangle (1,1.5);
	\draw (.5,.75) node{\tiny{$b_n$}};
	\draw (.2,-.2) node[above]{\tiny{$-$}};
	\draw (.8,-.2) node[above]{\tiny{$+$}};
	\draw (.8,1.7) node[below]{\tiny{$-$}};
	\draw (.2,1.7) node[below]{\tiny{$+$}};
	\end{scope}
	
	\begin{scope}[xshift = 3cm, yshift = 4.5cm]
	\draw (0,0) rectangle (1,1.5);
	\draw (.5,.75) node{\tiny{$a_0$}};
	\draw (.2,-.2) node[above]{\tiny{$-$}};
	\draw (.8,-.2) node[above]{\tiny{$+$}};
	\draw (.8,1.7) node[below]{\tiny{$-$}};
	\draw (.2,1.7) node[below]{\tiny{$+$}};
	\end{scope}
	
	\begin{scope}[xshift = 5cm, yshift = 4.5cm]
	\draw (0,0) rectangle (1,1.5);
	\draw (.5,.75) node{\tiny{$b_0$}};
	\draw (.2,-.2) node[above]{\tiny{$-$}};
	\draw (.8,-.2) node[above]{\tiny{$+$}};
	\draw (.8,1.7) node[below]{\tiny{$-$}};
	\draw (.2,1.7) node[below]{\tiny{$+$}};
	\end{scope}
	
	\begin{scope}[xshift = 5cm, yshift = -3cm]
	\draw (0,0) rectangle (1,1.5);
	\draw (.5,.75) node{\tiny{$c$}};
	\draw (.2,-.2) node[above]{\tiny{$-$}};
	\draw (.8,-.2) node[above]{\tiny{$+$}};
	\draw (.8,1.7) node[below]{\tiny{$-$}};
	\draw (.2,1.7) node[below]{\tiny{$+$}};
	\end{scope}

	\draw (5.2,1.5) arc (0:180:.7cm and .5cm);
	\draw (5.2,0) arc (0:-180:.7cm and .5cm);
	\draw (5.2,6) arc (0:180:.7cm and .5cm);
	\draw (5.2,4.5) arc (0:-180:.7cm and .5cm);
	\draw (3.2,1.5) -- (3.2,2.5);
	\draw (5.8,1.5) -- (5.8,2.5);
	\draw (3.2,4.5) -- (3.2,3.5);
	\draw (5.8,4.5) -- (5.8,3.5);
	\fill (4.5, 3) circle (.05cm);
	\fill (4.5,2.8) circle (.05cm);
	\fill (4.5,3.2) circle (.05cm);
	\draw (3.2,0) arc (0:-90:.5cm);
	\draw[rounded corners = 1.5mm](-.2,-3.5)-- (-.2,-1.5) -- (.8,-.5) -- (2.7,-.5);
	\draw[rounded corners = 1.5mm] (1.1,-1.5) -- (.8,-1.5) -- (.5,-1.2);
	\draw[rounded corners = 1.5mm] (.1,-.8) -- (-.2,-.5) -- (-.2,6.5);
	\draw (-.2,6.5) arc (180:90:.5cm);
	\draw (-.2,-3.5) arc (180:270:.5cm);
	\draw (.3,-4) -- (5.3,-4);
	\draw (5.3,-4) arc (-90:0:.5cm);
	\draw (5.8,-3.5) -- (5.8,-3);
	\draw (5.8,0) -- (5.8,-1.5);
	\draw (5.2, -1.5) arc (0:180:.5cm);
	\draw (4.2,-1.5) arc (0:-90:.5cm);
	\draw (3.7,-2) -- (2.2,-2);
	\draw (5.2,-3) arc (0:-90:.5cm);
	\draw (4.7,-3.5) -- (2.5,-3.5);
	\draw (2.5,-3.5) arc (-90:-180:.5cm);
	\draw (2,-3) -- (2,-2);
	\draw (2,-2) arc (0:90:.5cm);
	\draw (1.8,-2) arc (-90:-180:.5cm);
	\draw (1.3,-1.5) -- (1.3, -.7);
	\draw (1.3,-.3) -- (1.3,6);
	\draw (1.3,6) arc (180:90:.5cm);
	\draw (3.2,6) arc (0:90:.5cm);
	\draw (1.8,6.5) -- (2.7,6.5);
	\draw (5.8,6) -- (5.8,6.5);
	\draw (5.8,6.5) arc(0:90:.5cm);
	\draw (.3,7) -- (5.3,7);
	
	\draw[ultra thick, ->] (7,3) -- (8,3);
	\draw (7.5,-5) node{Family 1};

\begin{scope}[xshift = 8cm]

	\begin{scope}[xshift = 3cm]
	\draw (0,0) rectangle (1,1.5);
	\draw (.5,.75) node{\tiny{$a_n$}};
	\draw (.2,-.2) node[above]{\tiny{$-$}};
	\draw (.8,-.2) node[above]{\tiny{$+$}};
	\draw (.8,1.7) node[below]{\tiny{$-$}};
	\draw (.2,1.7) node[below]{\tiny{$+$}};
	\end{scope}
	
	\begin{scope}[xshift = 5cm]
	\draw (0,0) rectangle (1,1.5);
	\draw (.5,.75) node{\tiny{$b_n$}};
	\draw (.2,-.2) node[above]{\tiny{$-$}};
	\draw (.8,-.2) node[above]{\tiny{$+$}};
	\draw (.8,1.7) node[below]{\tiny{$-$}};
	\draw (.2,1.7) node[below]{\tiny{$+$}};
	\end{scope}
	
	\begin{scope}[xshift = 3cm, yshift = 4.5cm]
	\draw (0,0) rectangle (1,1.5);
	\draw (.5,.75) node{\tiny{$a_0$}};
	\draw (.2,-.2) node[above]{\tiny{$-$}};
	\draw (.8,-.2) node[above]{\tiny{$+$}};
	\draw (.8,1.7) node[below]{\tiny{$-$}};
	\draw (.2,1.7) node[below]{\tiny{$+$}};
	\end{scope}
	
	\begin{scope}[xshift = 5cm, yshift = 4.5cm]
	\draw (0,0) rectangle (1,1.5);
	\draw (.5,.75) node{\tiny{$b_0$}};
	\draw (.2,-.2) node[above]{\tiny{$-$}};
	\draw (.8,-.2) node[above]{\tiny{$+$}};
	\draw (.8,1.7) node[below]{\tiny{$-$}};
	\draw (.2,1.7) node[below]{\tiny{$+$}};
	\end{scope}
	
	\begin{scope}[xshift = 5cm, yshift = -3cm]
	\draw (0,0) rectangle (1,1.5);
	\draw (.5,.75) node{\tiny{$c{-}1$}};
	\draw (.2,-.2) node[above]{\tiny{$-$}};
	\draw (.8,-.2) node[above]{\tiny{$+$}};
	\draw (.8,1.7) node[below]{\tiny{$-$}};
	\draw (.2,1.7) node[below]{\tiny{$+$}};
	\end{scope}

	\draw (5.2,1.5) arc (0:180:.7cm and .5cm);
	\draw (5.2,0) arc (0:-180:.7cm and .5cm);
	\draw (5.2,6) arc (0:180:.7cm and .5cm);
	\draw (5.2,4.5) arc (0:-180:.7cm and .5cm);
	\draw (3.2,1.5) -- (3.2,2.5);
	\draw (5.8,1.5) -- (5.8,2.5);
	\draw (3.2,4.5) -- (3.2,3.5);
	\draw (5.8,4.5) -- (5.8,3.5);
	\fill (4.5, 3) circle (.05cm);
	\fill (4.5,2.8) circle (.05cm);
	\fill (4.5,3.2) circle (.05cm);
	\draw (1.1,-1.5) arc (-90:-180:.5cm);
	\draw (.6,-1) -- (.6,6.5);
	\draw (.6,6.5) arc (180:90:.5cm);
	\draw (2.5,-4) -- (5.3,-4);
	\draw (2.5,-4) arc (-90:-180:.5cm);
	\draw (5.3,-4) arc (-90:0:.5cm);
	\draw (5.8,-3.5) -- (5.8,-3);
	\draw (5.8,0) -- (5.8,-1.5);
	\draw (5.2, -1.5) arc (0:180:.5cm);
	\draw (4.2,-1.5) arc (0:-90:.5cm);
	\draw (3.7,-2) -- (3.4,-2);
	\draw (3,-2) --  (2.2,-2);
	\draw (5.2,-3) arc (0:-90:.5cm);
	\draw (4.7,-3.5) -- (3.7,-3.5);
	\draw (3.7,-3.5) arc (-90:-180:.5cm);
	\draw (3.2,-3) -- (3.2,0);
	\draw (2,-3.5) -- (2,-2);
	\draw (2,-2) arc (0:90:.5cm);
	\draw (1.8,-2) arc (-90:-180:.5cm);
	\draw (1.3,-1.5) -- (1.3, -.7);
	\draw (1.3,-.7) -- (1.3,6);
	\draw (1.3,6) arc (180:90:.5cm);
	\draw (3.2,6) arc (0:90:.5cm);
	\draw (1.8,6.5) -- (2.7,6.5);
	\draw (5.8,6) -- (5.8,6.5);
	\draw (5.8,6.5) arc(0:90:.5cm);
	\draw (1.1,7) -- (5.3,7);
	
	
	\begin{scope}[red, thick, rounded corners = 1mm,dashed]
	\draw[solid] (1.65,-1.725) ellipse (.15cm and .1cm);
	\draw (2.2,-3) -- (2.2,-3.8) -- (3.8,-3.8) -- (4.2,-4.2) -- (6.2,-4.2) -- (6.2,6) -- (5.4,6.8) -- (.8,6.8) -- (.8,-1.1) -- (1.1,-1.1) -- (1.1,2.5) -- (1.5,3) -- (1.5,6.2) -- (2.8,6.2) -- (2.8,-1.8) -- (2.2,-2.2) -- (2.2,-3);
	\draw (3.4,-3) -- (3.4,-3.3) -- (4.8,-3.3) -- (4.8,-1.2) -- (4.4,-1.2) -- (4.4,-2.2) -- (3.4,-2.2) -- (3.4,-3);
	\draw (4.2,1) -- (4.2,-.2) -- (4.8,-.2) -- (4.8,1.7) -- (4.2,1.7) -- (4.2,1);
	\draw[yshift = 4.5cm] (4.2,1) -- (4.2,-.2) -- (4.8,-.2) -- (4.8,1.7) -- (4.2,1.7) -- (4.2,1);
	
	\end{scope}

\end{scope}

\end{scope}

\begin{scope}[yshift = -23cm]
	
	\begin{scope}[xshift = 3cm]
	\draw (0,0) rectangle (1,1.5);
	\draw (.5,.75) node{\tiny{$a_n$}};
	\draw (.2,-.2) node[above]{\tiny{$-$}};
	\draw (.8,-.2) node[above]{\tiny{$+$}};
	\draw (.8,1.7) node[below]{\tiny{$-$}};
	\draw (.2,1.7) node[below]{\tiny{$+$}};
	\end{scope}
	
	\begin{scope}[xshift = 5cm]
	\draw (0,0) rectangle (1,1.5);
	\draw (.5,.75) node{\tiny{$b_n$}};
	\draw (.2,-.2) node[above]{\tiny{$-$}};
	\draw (.8,-.2) node[above]{\tiny{$+$}};
	\draw (.8,1.7) node[below]{\tiny{$-$}};
	\draw (.2,1.7) node[below]{\tiny{$+$}};
	\end{scope}
	
	\begin{scope}[xshift = 3cm, yshift = 4.5cm]
	\draw (0,0) rectangle (1,1.5);
	\draw (.5,.75) node{\tiny{$a_0$}};
	\draw (.2,-.2) node[above]{\tiny{$-$}};
	\draw (.8,-.2) node[above]{\tiny{$+$}};
	\draw (.8,1.7) node[below]{\tiny{$-$}};
	\draw (.2,1.7) node[below]{\tiny{$+$}};
	\end{scope}
	
	\begin{scope}[xshift = 5cm, yshift = 4.5cm]
	\draw (0,0) rectangle (1,1.5);
	\draw (.5,.75) node{\tiny{$b_0$}};
	\draw (.2,-.2) node[above]{\tiny{$-$}};
	\draw (.8,-.2) node[above]{\tiny{$+$}};
	\draw (.8,1.7) node[below]{\tiny{$-$}};
	\draw (.2,1.7) node[below]{\tiny{$+$}};
	\end{scope}
	
	\begin{scope}[xshift = 3cm, yshift = -3cm]
	\draw (0,0) rectangle (1,1.5);
	\draw (.5,.75) node{\tiny{$c$}};
	\draw (.2,-.2) node[above]{\tiny{$-$}};
	\draw (.8,-.2) node[above]{\tiny{$+$}};
	\draw (.8,1.7) node[below]{\tiny{$-$}};
	\draw (.2,1.7) node[below]{\tiny{$+$}};
	\end{scope}

	\draw (5.2,1.5) arc (0:180:.7cm and .5cm);
	\draw (5.2,6) arc (0:180:.7cm and .5cm);
	\draw (5.2,4.5) arc (0:-180:.7cm and .5cm);
	\draw (3.2,1.5) -- (3.2,2.5);
	\draw (5.8,1.5) -- (5.8,2.5);
	\draw (3.2,4.5) -- (3.2,3.5);
	\draw (5.8,4.5) -- (5.8,3.5);
	\fill (4.5, 3) circle (.05cm);
	\fill (4.5,2.8) circle (.05cm);
	\fill (4.5,3.2) circle (.05cm);
	\draw (3.2,0) arc (0:-90:.5cm);
	\draw[rounded corners = 1.5mm](-.2,-3.5)-- (-.2,-1.5) -- (.8,-.5) -- (2.7,-.5);
	\draw[rounded corners = 1.5mm] (.1,-.8) -- (-.2,-.5) -- (-.2,6.5);
	\draw (-.2,6.5) arc (180:90:.5cm);
	\draw (-.2,-3.5) arc (180:270:.5cm);
	\draw (.3,-4) -- (5.3,-4);
	\draw (5.8,0) -- (5.8,-3.5);
	\draw (5.8,-3.5) arc (0:-90:.5cm);
	\draw (3.8,0) -- (3.8,-1.5);
	\draw (3.8,-3) arc (-180:0:.7cm and .5cm);
	\draw (5.2,-3) -- (5.2,0);
	\draw (1.3,-3) -- (1.3, -.7);
	\draw (1.3,-3) arc (180:270:.5cm);
	\draw (1.8,-3.5) -- (2.7,-3.5);
	\draw (3.2,-3) arc (0:-90:.5cm);
	\draw (1.3,-.3) -- (1.3,6);
	\draw (1.3,6) arc (180:90:.5cm);
	\draw (3.2,6) arc (0:90:.5cm);
	\draw (3.2,-1.5) arc (0:180:.5cm);
	\draw (2.2,-1.5) arc (0:-90:.5cm);
	\draw (1.7,-2) -- (1.5,-2);
	\draw (1.8,6.5) -- (2.7,6.5);
	\draw (5.8,6) -- (5.8,6.5);
	\draw (5.8,6.5) arc(0:90:.5cm);
	\draw (.3,7) -- (5.3,7);
	\draw (1.1,-2) arc (-90:-180:.3cm);
	\draw[rounded corners = 1.5mm] (.8,-1.7) -- (.8,-1.5) -- (.5,-1.2);
	

	\draw[ultra thick, ->] (7,3) -- (8,3);
	\draw (7.5,-5) node{Family 3};

\begin{scope}[xshift = 9cm]

	\begin{scope}[xshift = 3cm]
	\draw (0-.15,0) rectangle (1.15,1.5);
	\draw (.5,.75) node{\tiny{$a_n{+}1$}};
	\draw (.2,-.2) node[above]{\tiny{$-$}};
	\draw (.8,-.2) node[above]{\tiny{$+$}};
	\draw (.8,1.7) node[below]{\tiny{$-$}};
	\draw (.2,1.7) node[below]{\tiny{$+$}};
	\end{scope}
	
	\begin{scope}[xshift = 5cm]
	\draw (-.15,0) rectangle (1.15,1.5);
	\draw (.5,.75) node{\tiny{$b_n{-}1$}};
	\draw (.2,-.2) node[above]{\tiny{$-$}};
	\draw (.8,-.2) node[above]{\tiny{$+$}};
	\draw (.8,1.7) node[below]{\tiny{$-$}};
	\draw (.2,1.7) node[below]{\tiny{$+$}};
	\end{scope}
	
	\begin{scope}[xshift = 3cm, yshift = 4.5cm]
	\draw (0,0) rectangle (1,1.5);
	\draw (.5,.75) node{\tiny{$a_0$}};
	\draw (.2,-.2) node[above]{\tiny{$-$}};
	\draw (.8,-.2) node[above]{\tiny{$+$}};
	\draw (.8,1.7) node[below]{\tiny{$-$}};
	\draw (.2,1.7) node[below]{\tiny{$+$}};
	\end{scope}
	
	\begin{scope}[xshift = 5cm, yshift = 4.5cm]
	\draw (0,0) rectangle (1,1.5);
	\draw (.5,.75) node{\tiny{$b_0$}};
	\draw (.2,-.2) node[above]{\tiny{$-$}};
	\draw (.8,-.2) node[above]{\tiny{$+$}};
	\draw (.8,1.7) node[below]{\tiny{$-$}};
	\draw (.2,1.7) node[below]{\tiny{$+$}};
	\end{scope}
	
	\begin{scope}[xshift = 3cm, yshift = -3cm]
	\draw (0,0) rectangle (1,1.5);
	\draw (.5,.75) node{\tiny{$c$}};
	\draw (.2,-.2) node[above]{\tiny{$-$}};
	\draw (.8,-.2) node[above]{\tiny{$+$}};
	\draw (.8,1.7) node[below]{\tiny{$-$}};
	\draw (.2,1.7) node[below]{\tiny{$+$}};
	\end{scope}

	\draw (3.8,0) arc (-180:0:.7cm and .5cm);
	
	\draw[rounded corners = 1.5mm] (3.2,0) -- (3.2,-.5) -- (3.8,-1) -- (3.8,-1.5);

	\draw (5.2,1.5) arc (0:180:.7cm and .5cm);
	\draw (5.2,6) arc (0:180:.7cm and .5cm);
	\draw (5.2,4.5) arc (0:-180:.7cm and .5cm);
	\draw (3.2,1.5) -- (3.2,2.5);
	\draw (5.8,1.5) -- (5.8,2.5);
	\draw (3.2,4.5) -- (3.2,3.5);
	\draw (5.8,4.5) -- (5.8,3.5);
	\fill (4.5, 3) circle (.05cm);
	\fill (4.5,2.8) circle (.05cm);
	\fill (4.5,3.2) circle (.05cm);
	\draw (-.2,-.5) -- (-.2,6.5);
	\draw (-.2,6.5) arc (180:90:.5cm);
	\draw (5.8,0) -- (5.8,-3);
	\draw (3.8,-3) arc (-180:0:1cm and .5cm);
	\draw (1.3,-3) arc (180:270:.5cm);
	\draw (1.8,-3.5) -- (2.7,-3.5);
	\draw (3.2,-3) arc (0:-90:.5cm);
	\draw (2.7,-1) -- (1.5,-1);
	\draw (1.1, -1) -- (.3,-1);
	\draw (.3,-1) arc (-90:-180:.5cm);
	\draw (1.3,-3) -- (1.3,6);
	\draw (1.3,6) arc (180:90:.5cm);
	\draw (3.2,6) arc (0:90:.5cm);
	\draw (3.2,-1.5) arc (0:90:.5cm);
	\draw (1.8,6.5) -- (2.7,6.5);
	\draw (5.8,6) -- (5.8,6.5);
	\draw (5.8,6.5) arc(0:90:.5cm);
	\draw (.3,7) -- (5.3,7);
	
	\begin{scope}[thick, red, dashed, rounded corners = 1mm]
	\draw (6.3,1) -- (6.3,-.1) -- (5.6,-.8) -- (5.6,-3.2) -- (4.2,-3.2) -- (4.2,-.6) -- (2.7,-.6) -- (2.7,6.2) -- (1.5,6.2) -- (1.5,3) -- (1.1,2.6) -- (1.1,-.8) -- (0,-.8) -- (0,6.8) -- (5.6,6.8) -- (6.3,6.1) -- (6.3,1);
	\draw[yshift = 4.5cm] (4.2,1) -- (4.2,-.2) -- (4.8,-.2) -- (4.8,1.7) -- (4.2,1.7) -- (4.2,1);
	\draw (4.3,1) -- (4.3,-.2) -- (4.7,-.2) -- (4.7,1.7) -- (4.3,1.7) -- (4.3,1);
	\draw (2.8, -2) -- (2.8,-3.2) -- (1.5,-3.2) -- (1.5,-1.2) -- (2.8,-1.2) -- (2.8,-2);
	\end{scope}

\end{scope}

\end{scope}
	
	\end{tikzpicture}$$
\caption{The diagrams on the left are associated to the graphs $\overline{G}$ in Families 1, 2, and 3. In each case, an isotopy yields a $B$-adequate diagram. The dashed curves are the all-$B$ states.}
\label{figure:F123}
\end{figure}
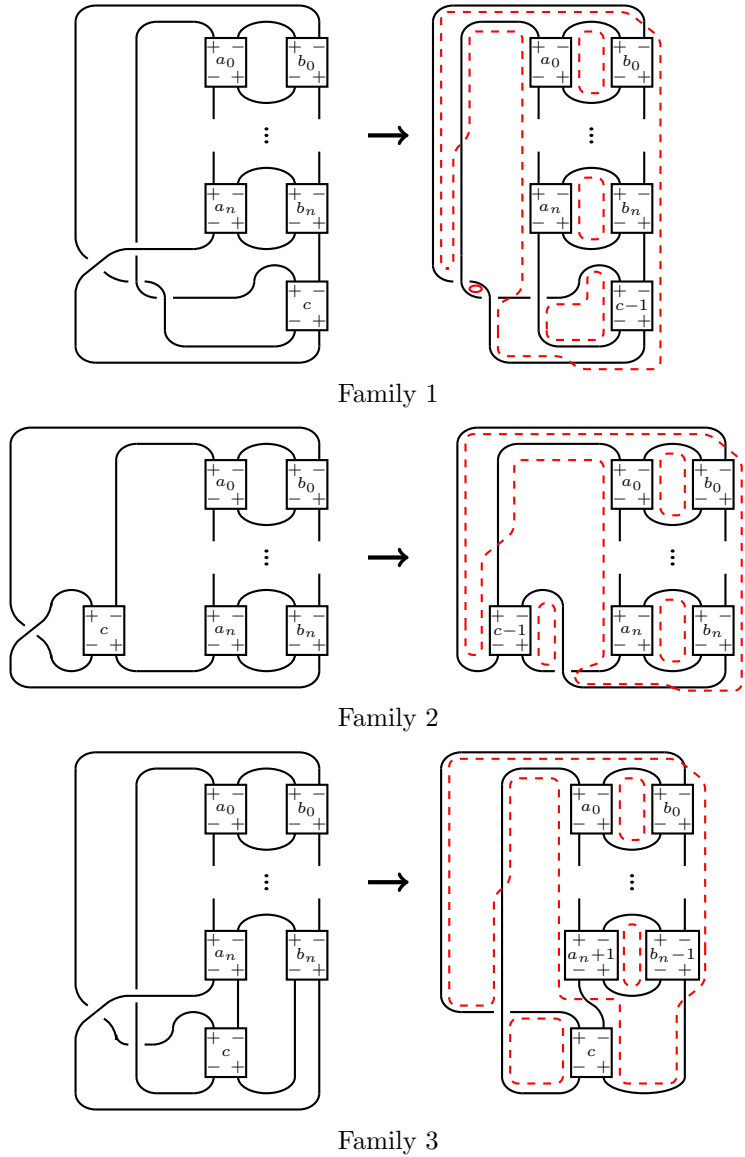

\begin{lemma}
\label{lemma:F4-7}
Let $D$ be a strongly reduced almost alternating diagram. Suppose that $\alpha_{c-4}=0$, $P=1$, and $\overline{P}=2$. After possibly relabeling vertices $v_1$ and $v_2$, the checkerboard graph $\overline{G}$ belongs to one of the four families in Figure \ref{figure:PD22}. Moreover, every link whose checkerboard graph is in any of these families is alternating.
\end{lemma}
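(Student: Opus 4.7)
The plan is to mirror the proof of Lemma \ref{lemma:F1-3}. By Theorem \ref{theorem:AAJones} with $\overline{P}=2$, the hypothesis $\alpha_{c-4}=0$ becomes
\[
\overline{\beta}_1 + \overline{P}_2 - \overline{P}_0 + \overline{Q} - \overline{S} = 1.
\]
Since $\overline{P}=2$, the nonnegative counts $\overline{P}_0, \overline{P}_1, \overline{P}_2$ sum to $2$, and any $K_4$ subgraph containing $v_1$ and $v_2$ contributes two length-two paths between $v_1$ and $v_2$, so $\overline{S}\in\{0,1\}$. The two length-two paths together with the dealternator edge force $\overline{\beta}_1 \geq 2$, and $\overline{\beta}_1 \geq 3$ whenever $\overline{S}=1$.

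Next, I would exploit the dual constraint coming from $P=1$. Because length-two paths in $G'$ between $u_1$ and $u_2$ correspond dually to specific cycle configurations in $\overline{G}$, the equation $P=1$ severely restricts which additional cycles $\overline{G}$ can carry without creating extra length-two paths in $G'$. In practical terms, most attempts to add edges or multiple-edge bundles to $\overline{G}$ are forbidden because they would produce a second length-two path in $G'$.

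With both sets of constraints in hand, I would enumerate the admissible graphs using the same building procedure employed for Families 1--3. Begin with the two vertices $v_1, v_2$ connected only by the dealternator, then allow a finite list of structure-adding moves: add a length-two path between $v_1$ and $v_2$, add a length-three path between $v_1$ and $v_2$ through one or two new interior vertices, complete a $K_4$ by a chord, or subdivide an existing edge by inserting a multiple edge. Each move has a prescribed effect on $\overline{\beta}_1, \overline{P}_i, \overline{Q}, \overline{S}$, and the balance equation combined with $P=1$ leaves only finitely many combinations. These should group precisely into the four families of Figure \ref{figure:PD22}.

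Finally, for each of the four families I would display the corresponding diagram $D$ and explicitly reduce it to an alternating diagram by flypes followed by Reidemeister I and II moves, in the spirit of Figure \ref{figure:SecretlyAlternating}. The uniform principle is that in each family the dealternator is forced to abut a twist region so that a flype slides it next to parallel crossings of the same sign, after which a Reidemeister II move removes the offending crossings and leaves an alternating diagram. The main obstacle will be the simultaneous case analysis: keeping the duality constraint $P=1$ synchronized with the arithmetic constraint requires careful ruling out of configurations that satisfy one but not the other, and the diagrammatic reductions to alternating form, though conceptually routine, must be carried out separately for each family.
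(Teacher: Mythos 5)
Your outline follows the same road map as the paper's proof: specialize the $\alpha_{c-4}$ formula at $\overline{P}=2$, bound $\overline{S}$, $\overline{P}_0$, and $\overline{\beta}_1$, enumerate, and then reduce each family to an alternating diagram. But the decisive step is missing. The balance equation $\overline{\beta}_1+\overline{P}_2-\overline{P}_0+\overline{Q}-\overline{S}=1$ together with your bounds does not by itself yield only four families; a priori it still admits configurations with $\overline{Q}=1$ or $\overline{P}_2=1$, which would contribute additional families containing length-three paths or doubled multiple-edge paths. The paper disposes of these at the outset by asserting $\overline{P}_2=\overline{Q}=0$, after which the equation collapses to $\overline{\beta}_1=\overline{P}_0+\overline{S}+1$ and the four cases $(\overline{S},\overline{P}_0)\in\{0,1\}\times\{1,2\}$ read off Families 4--7 directly --- no iterative ``building procedure'' as in Lemma \ref{lemma:F1-3} is needed. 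You gesture at excluding the extra configurations via ``the dual constraint coming from $P=1$,'' but that is not the mechanism that does the work here: a path counted by $\overline{Q}$ is internally disjoint from the two length-two paths and the dealternator edge, so it forces $\overline{\beta}_1\geq 3+\overline{S}$, while the balance equation caps $\overline{\beta}_1$ at $2+\overline{S}$ once $\overline{Q}\geq 1$; a similar circuit-rank count kills $\overline{P}_2\geq 1$. You need to carry out some such argument explicitly. (The hypothesis $P=1$ is still essential, but for a different purpose: it dictates which edges of $\overline{G}$ must have multiplicity one, i.e., the labels ``$1$'' that appear in Figure \ref{figure:PD22}.)

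The second half of the lemma is also left as a plan, and your proposed uniform principle --- flype the dealternator into a twist region and cancel with a Reidemeister II move --- does not accurately describe all four cases. In the paper's Figure \ref{figure:Family4567}, Family 7 does not terminate at an alternating diagram at all: the isotopy ends at a non-alternating diagram that is a connected sum of alternating diagrams, and one concludes the link is alternating from multiplicativity under connected sum rather than from the diagram itself. Each of the four isotopies must be exhibited separately; this is where the actual content of the second assertion lives, and it cannot be waved through with a single template.
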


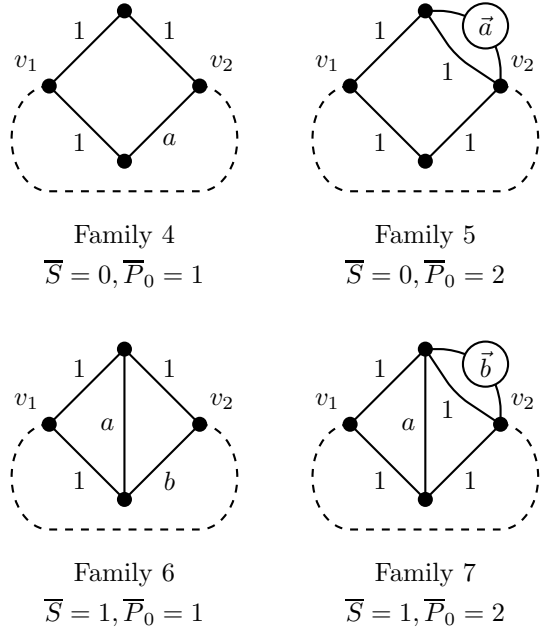
\begin{figure}[h]
$$\begin{tikzpicture}[thick]


	\fill (0,0) circle (.1cm);
	\fill (1,1) circle (.1cm);
	\fill (2,0) circle (.1cm);
	\fill (1,-1) circle (.1cm);
	\draw (-.3,.3) node{$v_1$};
	\draw (2.3,0.3) node{$v_2$};
	
	\draw (0,0) -- (1,1) -- (2,0) -- (1,-1) -- (0,0);
	\draw (.4,.5) node[above]{1};
	\draw (.4,-.5) node[below]{1};
	\draw (1.6,.5) node[above]{1};
	\draw (1.6,-.5) node[below]{$a$};
	\draw[dashed] (0,0) arc (90:270:.5cm and .7cm);
	\draw[dashed] (2,0) arc (90:-90:.5cm and .7cm);
	\draw[dashed] (0,-1.4) -- (2,-1.4);
	
	\draw (1,-2) node{Family 4};
	\draw (1,-2.5) node{$\overline{S}=0, \overline{P}_{0}=1$};
	
\begin{scope}[xshift = 4cm]

	\fill (0,0) circle (.1cm);
	\fill (1,1) circle (.1cm);
	\fill (2,0) circle (.1cm);
	\fill (1,-1) circle (.1cm);
	\draw (-.3,.3) node{$v_1$};
	\draw (2.3,0.3) node{$v_2$};
	
	\draw (0,0) -- (1,1);
	\draw (2,0) -- (1,-1) -- (0,0);
	\draw [rounded corners = 2mm] (1,1) -- (1.4,.4) -- (2,0);
	\draw (.4,.5) node[above]{1};
	\draw (.4,-.5) node[below]{1};
	\draw (1.3,.2) node{1};
	\draw (1.6,-.5) node[below]{$1$};
	\draw[rounded corners = 2mm] (1,1) -- (1.4,1) -- (1.8,.8);
	\draw[rounded corners = 2mm] (2,0) -- (2,.4) -- (1.8,.8);
	\fill[white] (1.8,.8) circle(.3cm);
	\draw (1.8,.8) circle(.3cm);
	\draw (1.8,.8) node{$\vec{a}$};
	\draw[dashed] (0,0) arc (90:270:.5cm and .7cm);
	\draw[dashed] (2,0) arc (90:-90:.5cm and .7cm);
	\draw[dashed] (0,-1.4) -- (2,-1.4);
	
	\draw (1,-2) node{Family 5};
	\draw (1,-2.5) node{$\overline{S}=0, \overline{P}_{0}=2$};

\end{scope}

\begin{scope}[yshift = -4.5 cm]

	\fill (0,0) circle (.1cm);
	\fill (1,1) circle (.1cm);
	\fill (2,0) circle (.1cm);
	\fill (1,-1) circle (.1cm);
	\draw (-.3,.3) node{$v_1$};
	\draw (2.3,0.3) node{$v_2$};
	
	\draw (0,0) -- (1,1) -- (2,0) -- (1,-1) -- (0,0);
	\draw (1,1) -- (1,-1);
	\draw (.4,.5) node[above]{1};
	\draw (.4,-.5) node[below]{1};
	\draw (1.6,.5) node[above]{1};
	\draw (1.6,-.5) node[below]{$b$};
	\draw (1,0) node[left]{$a$};
	\draw[dashed] (0,0) arc (90:270:.5cm and .7cm);
	\draw[dashed] (2,0) arc (90:-90:.5cm and .7cm);
	\draw[dashed] (0,-1.4) -- (2,-1.4);
	
	\draw (1,-2) node{Family 6};
	\draw (1,-2.5) node{$\overline{S}=1, \overline{P}_{0}=1$};

\end{scope}

\begin{scope}[xshift = 4cm, yshift = -4.5cm]

	\fill (0,0) circle (.1cm);
	\fill (1,1) circle (.1cm);
	\fill (2,0) circle (.1cm);
	\fill (1,-1) circle (.1cm);
	\draw (-.3,.3) node{$v_1$};
	\draw (2.3,0.3) node{$v_2$};
	
	\draw (0,0) -- (1,1);
	\draw (2,0) -- (1,-1) -- (0,0);
	\draw [rounded corners = 2mm] (1,1) -- (1.4,.4) -- (2,0);
	\draw (.4,.5) node[above]{1};
	\draw (.4,-.5) node[below]{1};
	\draw (1.3,.2) node{1};
	\draw (1.6,-.5) node[below]{$1$};
	\draw[rounded corners = 2mm] (1,1) -- (1.4,1) -- (1.8,.8);
	\draw[rounded corners = 2mm] (2,0) -- (2,.4) -- (1.8,.8);
	\fill[white] (1.8,.8) circle(.3cm);
	\draw (1.8,.8) circle(.3cm);
	\draw (1.8,.8) node{$\vec{b}$};
	\draw (1,1) -- (1,-1);
	\draw (1,0) node[left]{$a$};
	\draw[dashed] (0,0) arc (90:270:.5cm and .7cm);
	\draw[dashed] (2,0) arc (90:-90:.5cm and .7cm);
	\draw[dashed] (0,-1.4) -- (2,-1.4);
	
	\draw (1,-2) node{Family 7};
	\draw (1,-2.5) node{$\overline{S}=1, \overline{P}_{0}=2$};

\end{scope}

\end{tikzpicture}$$
\caption{Four families of $\overline{G}$ when $\alpha_{c-4}=0$, $P=1$, and $\overline{P}=2$.}
\label{figure:PD22}
\end{figure}

\begin{proof}
Since $\alpha_{c-4}=0$, $P=1$, and $\overline{P}=2$, we have that $\overline{P}_2=\overline{Q}=0$ and 
$$\overline{\beta_1}=\overline{P}_0+\overline{S}+1.$$
Since $\overline{P}=2$, we have that $\overline{P_0}=0,1$, or 2. Furthermore, since there are two paths of length two in each $K_4$ subgraph containing $v_1$ and $v_2$, it follows that $\overline{P}=2$ implies that $\overline{S}=0$ or $1$.

Suppose that $\overline{S}=0$. Then $\overline{\beta}_1= \overline{P}_0 + 1$. Since $\overline{P}_0\leq \overline{P}=2$ and $2\leq \overline{\beta}_1$, we have two cases: either $\overline{\beta}_1=2$ and $\overline{P}_0=1$ or $\overline{\beta}_1=3$ and $\overline{P}_0=2$. In the former case, we obtain Family 4, and in the latter case we obtain Family 5.

Suppose $\overline{S}=1$. Then $\overline{\beta}_1=\overline{P}_0+2$. Since $\overline{S}=1$, we have that $3\leq \overline{\beta}_1$, and thus $3\leq \overline{P}_0+2$. Again there are two cases: either $\overline{\beta}_1=3$ and $\overline{P}_0=1$ or $\overline{\beta}_1 = 4$ and $\overline{P}_0=2$. In the former case, we obtain Family 6, and in the latter case we obtain Family 7.

The diagrams in the left column of Figure \ref{figure:Family4567} are the four families of link diagrams whose checkerboard graphs $\overline{G}$ are shown in Figure \ref{figure:PD22}. For each of the four families, Figure \ref{figure:Family4567} shows an isotopy to an alternating link. In the case of Family 7, the diagram is non-alternating; however since it is a connected sum of alternating diagrams, it follows that the link is alternating. Since the links associated to the checkerboard graphs in these families are alternating, they are not almost alternating.
\end{proof}
\begin{figure}[h]
$$\begin{tikzpicture}[thick, scale =.57]
	
	\draw (2,3.8) -- (2,1);
	\draw (2,1) arc (180:270:.5cm);
	\draw (2.5,.5) -- (2.7,.5);
	\draw (2.7,.5) arc (-90:0:.5cm);
	\draw (2.2,3) -- (2.7,3);
	\draw (2.7,3) arc (90:0:.5cm);
	\draw (3.8,2.5) -- (3.8,4.2);
	\draw (3.8,4.2) arc (0:90:.5cm);
	\draw (3.3,4.7) -- (2.5,4.7);
	\draw (2.5,4.7) arc (90:180:.5cm);
	\draw[rounded corners = 2mm] (3.6,4) -- (1,4) -- (0,3) -- (0,.5);
	\draw (0,.5) arc (-180:-90:.5cm);
	\draw[rounded corners = 2mm] (1.8,3) -- (1,3) -- (.7,3.3);
	\draw [rounded corners = 2mm] (.3,3.7) -- (0,4) -- (0,4.7);
	\draw (0,4.7) arc (180:90:.5cm);
	\draw (4,4) arc (-90:90:.6cm);
	\draw (4,5.2) -- (.5,5.2);
	\draw (.5,0) -- (3.3,0);
	\draw (3.8,1) -- (3.8,.5);
	\draw (3.8,.5) arc (0:-90:.5cm);
	
	\begin{scope}[xshift = 3cm, yshift = 1cm]
	\draw (0,0) rectangle (1,1.5);
	\draw (.5,.75) node{\tiny{$a$}};
	\draw (.2,0) node[above]{\tiny{$-$}};
	\draw (.8,0) node[above]{\tiny{$+$}};
	\draw (.8,1.5) node[below]{\tiny{$-$}};
	\draw (.2,1.5) node[below]{\tiny{$+$}};
	\end{scope}
	

\begin{scope}[xshift = 5cm]
	
	\draw (2,4.2) -- (2,1);
	\draw (2,1) arc (180:270:.5cm);
	\draw (2.5,.5) -- (2.7,.5);
	\draw (2.7,.5) arc (-90:0:.5cm);
	\draw (2.2,3) -- (2.7,3);
	\draw (2.7,3) arc (90:0:.5cm);
	\draw (3.8,2.5) -- (3.8,3);
	\draw (3.8,3.4)-- (3.8,4.2);
	\draw (3.8,4.2) arc (0:90:.5cm);
	\draw (3.3,4.7) -- (2.5,4.7);
	\draw (2.5,4.7) arc (90:180:.5cm);
	\draw (3.6,4) arc (90:270:.4cm);
	\draw (3.6,3.2) -- (4.3,3.2);
	\draw (4.3,3.2) arc (90:0:.5cm);
	\draw (4.8,2.7) -- (4.8,1);
	\draw (3.8,1) arc (-180:0:.5cm);
	
	\draw (1.8,3) arc (-90:-180:.5cm);
	\draw (1.3,3.5) -- (1.3,4.7);
	\draw (1.3,4.7) arc (180:90:.5cm);
	\draw (4,4) arc (-90:90:.6cm);
	\draw (4,5.2) -- (1.8,5.2);
	
	\begin{scope}[xshift = 3cm, yshift = 1cm]
	\draw (0,0) rectangle (1,1.5);
	\draw (.5,.75) node{\tiny{$a$}};
	\draw (.2,0) node[above]{\tiny{$-$}};
	\draw (.8,0) node[above]{\tiny{$+$}};
	\draw (.8,1.5) node[below]{\tiny{$-$}};
	\draw (.2,1.5) node[below]{\tiny{$+$}};
	\end{scope}
	
\end{scope}


\begin{scope}[xshift = 10cm]

	\draw (3.2,2.5) arc (0:180:.5cm);
	\draw (3.8,2.5) arc (180:0:.5cm);
	\draw (3.2,1) arc (0:-180:.5cm);
	\draw (3.8,1) arc (-180:0:.5cm);
	\draw (2.2,2.5) -- (2.2,1);
	\draw (4.8,2.5) -- (4.8,1);
	
	\begin{scope}[xshift = 3cm, yshift = 1cm]
	\draw (0,0) rectangle (1,1.5);
	\draw (.5,.75) node{\tiny{$a{-}1$}};
	\draw (.2,0) node[above]{\tiny{$-$}};
	\draw (.8,0) node[above]{\tiny{$+$}};
	\draw (.8,1.5) node[below]{\tiny{$-$}};
	\draw (.2,1.5) node[below]{\tiny{$+$}};
	\end{scope}

\end{scope}

\draw[ultra thick, ->] (5,2) -- (6,2);
\draw[ultra thick, ->] (10.5,2) -- (11.5,2);
\draw (7.75,-1) node{Family 4};


\begin{scope}[yshift = -6cm, xshift = 1cm]

	\draw[ultra thick, ->] (3.75,1.5) -- (4.75,1.5);
	\draw[ultra thick, ->] (10,1.5) -- (11,1.5);
	\draw (6.75,-2) node{Family 5};

	\draw (.2,0) -- (1.2,0);
	\draw (0,-.2) arc (180:360:.5cm and .3cm);
	\draw (0,-.2) -- (0,.8);
	\draw (1,.2) -- (1,2.5);
	\draw (1,2.5) arc (180:90:.2cm);
	\draw[rounded corners = 1mm] (1.2,2.7) -- (1.5,2.7) -- (2,2);
	\draw (1.2,1) arc (-90:0:.2cm);
	\draw[rounded corners = 1mm] (1.4,1.2) -- (1.4,1.5) -- (2,2);
	\draw (1.2,0) arc (90:-90:.3cm and .5cm);
	\draw[rounded corners = 1mm] (.8,1) -- (-.4,1) -- (-1.4,0) -- (-1.4,-.2);
	\draw (-1.4,-.2) arc (-180:-90:.8cm);
	\draw (-.6,-1) -- (1.2,-1);
	\draw[rounded corners = 1mm] (-.2,0) -- (-.4,0) -- (-.8,.4);
	\draw[rounded corners  = 1mm] (-1,.6) -- (-1.4,1) -- (-1.4,3.2);
	\draw (-1.4,3.2) arc (180:90:.2cm);
	\draw (-1.2, 3.4) -- (3,3.4);
	\draw[rounded corners = 1mm] (2,2) -- (2.5,2.5) -- (2.5,2.8);
	\draw (2.5,2.8) arc (0:90:.2cm);
	\draw (2.3,3) -- (.2,3);
	\draw (.2,3) arc (90:180:.2cm);
	\draw (0,2.8) -- (0,1.2);
	\draw[rounded corners = 1mm] (2,2) -- (2.5,1.5) -- (3,1.5);
	\draw (3,1.5) arc (-90:0:.2cm);
	\draw (3.2,1.7) -- (3.2,3.2);
	\draw (3.2,3.2) arc (0:90:.2cm);

	\fill[white] (2,2) circle (.6cm);
	\draw (2,2) circle (.6cm);
	\draw (2,2) node{\tiny{$\vec{a}$}};
	\draw (1.7,2.3) node{\tiny{$-$}};
	\draw (1.7,1.7) node{\tiny{$+$}};
	\draw (2.3,2.3) node{\tiny{$+$}};
	\draw (2.3,1.7) node{\tiny{$-$}};
	
	
	\begin{scope}[xshift = 6cm]
	
		\draw (1,-.2) -- (1,2.5);
		\draw (1,2.5) arc (180:90:.2cm);
		\draw[rounded corners = 1mm] (1.2,2.7) -- (1.5,2.7) -- (2,2);
		\draw (0,-.2) arc (180:360:.5cm and .3cm);
		\draw (0,-.2) -- (0,2.8);
		\draw[rounded corners = 1mm] (2,2) -- (2.5,2.5) -- (2.5,2.8);
		\draw (2.5,2.8) arc (0:90:.2cm);
		\draw (2.3,3) -- (.2,3);
		\draw (.2,3) arc (90:180:.2cm);
		\draw[rounded corners = 1mm] (1.4,1.2) -- (1.4,1.5) -- (2,2);
		\draw (1.2,1) arc (-90:0:.2cm);
		\draw (.8,1) -- (.2,1);
		\draw (-.2,1) -- (-.4,1);
		\draw (-.4,1) arc (-90:-180:.2cm);
		\draw (-.6,1.2) -- (-.6,3.2);
		\draw (-.6,3.2) arc (180:90:.2cm);
		\draw (-.4,3.4) -- (3,3.4);
		\draw[rounded corners = 1mm] (2,2) -- (2.5,1.5) -- (3,1.5);
		\draw (3,1.5) arc (-90:0:.2cm);
		\draw (3.2,1.7) -- (3.2,3.2);
		\draw (3.2,3.2) arc (0:90:.2cm);
	
		\fill[white] (2,2) circle (.6cm);
		\draw (2,2) circle (.6cm);
		\draw (2,2) node{\tiny{$\vec{a}$}};
		\draw (1.7,2.3) node{\tiny{$-$}};
		\draw (1.7,1.7) node{\tiny{$+$}};
		\draw (2.3,2.3) node{\tiny{$+$}};
		\draw (2.3,1.7) node{\tiny{$-$}};

	\end{scope}
	
	\begin{scope}[xshift = 11cm]
	
		\draw (2,2.5) ellipse (.6cm and .4cm);
		\draw (2,1.5) ellipse (.6cm and .4cm);	
	
		\fill[white] (2,2) circle (.6cm);
		\draw (2,2) circle (.6cm);
		\draw (2,2) node{\tiny{$\vec{a}$}};
		\draw (1.7,2.3) node{\tiny{$-$}};
		\draw (1.7,1.7) node{\tiny{$+$}};
		\draw (2.3,2.3) node{\tiny{$+$}};
		\draw (2.3,1.7) node{\tiny{$-$}};
	
	\end{scope}

\end{scope}


\begin{scope}[yshift = -14cm]

	\draw[ultra thick, ->] (4.4,2) -- (5.4,2);
	\draw[ultra thick, ->] (10.5,2) -- (11.5,2);
	\draw (7.45,-1.5) node{Family 6};

	\draw (1,2) rectangle (2.5,3);
	\draw (1.75,2.5) node {\tiny{$a$}};
	\draw (1.2,2.75) node{\tiny{$-$}};
	\draw (1.2,2.25) node{\tiny{$+$}};
	\draw (2.3,2.75) node{\tiny{$+$}};
	\draw (2.3,2.25) node{\tiny{$-$}};
	\draw (1,2.25) arc (90:180:.125);
	\draw (.875,2.125) arc (0:-90:.125);
	\draw (1,2.75) arc (-90:-180:.125);
	\draw (.875,2.875) arc (0:90:.125);
	\draw[rounded corners = 1mm] (.75,3) -- (-.4,3) -- (-1.4,2) -- (-1.4,-.2);
	\draw (.75,2) -- (.2,2);
	\draw (0,2.8) -- (0,.2);
	\draw[rounded corners = 1mm] (-.2,2) -- (-.4,2) -- (-.8,2.4);
	\draw (2.8,1.7) rectangle (3.8,.2);
	\draw (3.3,.95) node{\tiny{$b$}};
	\draw (3.05,1.5) node{\tiny{$+$}};
	\draw (3.55,1.5) node{\tiny{$-$}};
	\draw (3.05,.4) node{\tiny{$-$}};
	\draw (3.55,.4) node{\tiny{$+$}};
	\draw (3.05,1.7) arc (0:90:.55cm);
	\draw (3.05,.2) arc (0:-90:.25cm);
	\draw (2.8,-.05) -- (.25,-.05);
	\draw (.25,-.05) arc (-90:-180:.25cm);
	\draw (3.55,.2) -- (3.55,-.2);
	\draw (3.55,-.2) arc (0:-90:.2cm);
	\draw (-1.4,-.2) arc (-180:-90:.2cm);
	\draw (-1.2,-.4) -- (3.35,-.4);
	\draw (0,3.2) -- (0, 3.6);
	\draw (0,3.6) arc (180:90:.2cm);
	\draw (.2,3.8) -- (3.35,3.8);
	\draw (3.35,3.8) arc (90:0:.2cm);
	\draw (3.55,3.6) -- (3.55,1.7);
	\draw (2.5,2.75) arc (-90:0:.25cm);
	\draw (2.75,3) -- (2.75,3.6);
	\draw (2.75, 4) -- (2.75,4.2);
	\draw (2.75,4.2) arc (0:90:.2cm);
	\draw (2.55,4.4) -- (-1.2,4.4);
	\draw (-1.2,4.4) arc (90:180:.2cm);
	\draw[rounded corners = 1mm] (-1.4,4.2) -- (-1.4,3) -- (-1,2.6);
	
	
	\begin{scope}[xshift = 6.5cm]
	
	\draw (1,2) rectangle (2.5,3);
	\draw (1.75,2.5) node {\tiny{$a{+}1$}};
	\draw (1.2,2.75) node{\tiny{$-$}};
	\draw (1.2,2.25) node{\tiny{$+$}};
	\draw (2.3,2.75) node{\tiny{$+$}};
	\draw (2.3,2.25) node{\tiny{$-$}};
	\draw (1,2.25) arc (90:180:.25cm);
	\draw (.75,2) -- (.75,.2);
	\draw (.75,.2) arc (180:270:.25cm);
	\draw (1,2.75) -- (.2,2.75);
	\draw (-.25,2.75) -- (-.15,2.75);
	\draw (-.25,2.75) arc (-90:-180:.25cm);
	\draw (-.5,3) -- (-.5,4.2);
	\draw (-.5,4.2) arc (180:90:.2cm);
	\draw (0,3.6) -- (0,-.2);
	\draw (0,-.2) arc (180:270:.2cm);
	\draw (2.8,1.7) rectangle (3.8,.2);
	\draw (3.3,.95) node{\tiny{$b{-}1$}};
	\draw (3.05,1.5) node{\tiny{$+$}};
	\draw (3.55,1.5) node{\tiny{$-$}};
	\draw (3.05,.4) node{\tiny{$-$}};
	\draw (3.55,.4) node{\tiny{$+$}};
	\draw (3.05,1.7) arc (0:90:.55cm);
	\draw (3.05,.2) arc (0:-90:.25cm);
	\draw (2.8,-.05) -- (1,-.05);
	\draw (3.55,.2) -- (3.55,-.2);
	\draw (3.55,-.2) arc (0:-90:.2cm);
	\draw (.2,-.4) -- (3.35,-.4);
	\draw (0,3.6) arc (180:90:.2cm);
	\draw (.2,3.8) -- (3.35,3.8);
	\draw (3.35,3.8) arc (90:0:.2cm);
	\draw (3.55,3.6) -- (3.55,1.7);
	\draw (2.5,2.75) arc (-90:0:.25cm);
	\draw (2.75,3) -- (2.75,3.6);
	\draw (2.75, 4) -- (2.75,4.2);
	\draw (2.75,4.2) arc (0:90:.2cm);
	\draw (2.55,4.4) -- (-.3,4.4);
		
	\end{scope}
	
	
	\begin{scope}[xshift = 12cm]
	
	\draw (1,2) rectangle (2.5,3);
	\draw (1.75,2.5) node {\tiny{$a{+}1$}};
	\draw (1.2,2.75) node{\tiny{$-$}};
	\draw (1.2,2.25) node{\tiny{$+$}};
	\draw (2.3,2.75) node{\tiny{$+$}};
	\draw (2.3,2.25) node{\tiny{$-$}};
	\draw (1,2.25) arc (90:180:.25cm);
	\draw (.75,2) -- (.75,.2);
	\draw (.75,.2) arc (180:270:.25cm);
	\draw (1,2.75) -- (.2,2.75);
	\draw (-.25,2.75) -- (0,2.75);
	\draw (-.25,2.75) arc (-90:-180:.25cm);
	\draw (-.5,3) -- (-.5,4.2);
	\draw (-.5,4.2) arc (180:90:.2cm);
	\draw (2.8,1.7) rectangle (3.8,.2);
	\draw (3.3,.95) node{\tiny{$b{-}1$}};
	\draw (3.05,1.5) node{\tiny{$+$}};
	\draw (3.55,1.5) node{\tiny{$-$}};
	\draw (3.05,.4) node{\tiny{$-$}};
	\draw (3.55,.4) node{\tiny{$+$}};
	\draw (3.05,1.7) arc (0:90:.55cm);
	\draw (3.05,.2) arc (0:-90:.25cm);
	\draw (2.8,-.05) -- (1,-.05);
	
	\draw (3.55,.2) arc (180:270:.2cm);
	\draw (3.75,0) -- (4,0);
	\draw (4,0) arc (-90:0:.2cm);
	\draw (4.2,.2) -- (4.2,1.7);
	\draw (4.2,1.7) arc (0:90:.2cm);
	\draw (4,1.9) -- (3.75,1.9);
	\draw (3.75,1.9) arc (90:180:.2cm);
	\draw (-.15,2.75) -- (.2,2.75);
	\draw (2.5,2.75) arc (-90:0:.25cm);
	\draw (2.75,3) -- (2.75,4.2);
	\draw (2.75,4.2) arc (0:90:.2cm);
	\draw (2.55,4.4) -- (-.3,4.4);
		
	\end{scope}

\end{scope}


\begin{scope}[yshift = -22cm,xshift = -2cm]

	\draw[ultra thick, ->] (5.8,3) -- (6.8,3);
	\draw[ultra thick, ->] (13.7,3) -- (14.7,3);
	\draw (9.45,-1.5) node{Family 7};

	\draw (1,2) rectangle (2.5,3);
	\draw (1.75,2.5) node {\tiny{$a$}};
	\draw (1.2,2.75) node{\tiny{$-$}};
	\draw (1.2,2.25) node{\tiny{$+$}};
	\draw (2.3,2.75) node{\tiny{$+$}};
	\draw (2.3,2.25) node{\tiny{$-$}};
	\draw (1,2.25) arc (90:180:.125);
	\draw (.875,2.125) arc (0:-90:.125);
	\draw (1,2.75) arc (-90:-180:.125);
	\draw (.875,2.875) arc (0:90:.125);
	\draw[rounded corners = 1mm] (.75,3) -- (-.4,3) -- (-1.4,2) -- (-1.4,.2);
	\draw (.75,2) -- (.2,2);
	\draw (0,2.8) -- (0,1.2);
	\draw (0,1.2) arc (180:270:.2cm);
	\draw (.2,1) -- (2.55,1);
	\draw (2.95,1) -- (3.2,1);
	\draw (3.2,1) arc (-90:0:.2cm);
	\draw (3.4,1.2) -- (3.4,4);
	
	\draw[rounded corners = 1mm] (-.2,2) -- (-.4,2) -- (-.8,2.4);

	\draw (2.5, 2.25) arc (90:0:.25cm);
	\draw (2.75,2) -- (2.75,.2);
	\draw (2.75,.2) arc (0:-90:.2cm);
	\draw (2.55,0) -- (-1.2,0);
	\draw (-1.2,0) arc (-90:-180:.2cm);

	\draw (0,3.2) -- (0, 4.4);
	\draw (0,4.4) arc (180:90:.2cm);
	\draw (2.5,2.75) -- (3.2,2.75);

	\draw[rounded corners = 1mm] (-1.4,5) -- (-1.4,3) -- (-1,2.6);
	\draw (-1.4,5) arc (180:90:.2cm);
	\draw (-1.2,5.2) -- (5.2,5.2);
	\draw (5.2,5.2) arc (90:0:.2cm);
	
	\draw[rounded corners = 1mm] (3.6, 2.75) -- (3.8,2.75) -- (4.5,3.5);
	\draw (3.4,4) arc (180:90:.3cm);
	\draw[rounded corners = 1mm] (3.7,4.3) -- (3.9,4.3) -- (4.5,3.5);
	\draw[rounded corners = 1mm] (4.5,3.5) -- (4.8,3) -- (5.2,3);
	\draw (5.2,3) arc (-90:0:.2cm);
	\draw (5.4,3.2) -- (5.4,5);
	
	\draw [rounded corners = 1mm] (4.5,3.5) -- (5.1,4.1) --(5.1,4.4);
	\draw (5.1,4.4) arc (0:90:.2cm);
	\draw (4.9,4.6) -- (.2,4.6);
	
	\fill[white] (4.5,3.5) circle (.6cm);
	\draw (4.5,3.5) circle (.6cm);
	\draw (4.5,3.5) node{\tiny{$\vec{b}$}};
	\draw (4.2,3.8) node{\tiny{$-$}};
	\draw (4.2,3.2) node{\tiny{$+$}};
	\draw (4.8,3.8) node{\tiny{$+$}};
	\draw (4.8,3.2) node{\tiny{$-$}};
	
	
	\begin{scope}[xshift = 8cm]

	\draw (1,2) rectangle (2.5,3);
	\draw (1.75,2.5) node {\tiny{$a{+}1$}};
	\draw (1.2,2.75) node{\tiny{$-$}};
	\draw (1.2,2.25) node{\tiny{$+$}};
	\draw (2.3,2.75) node{\tiny{$+$}};
	\draw (2.3,2.25) node{\tiny{$-$}};
	\draw (1,2.25) arc (90:180:.2);
	\draw (.8,2.05) -- (.8,1.8);
	\draw (.8,1.8) arc (180:270:.2cm);
	\draw (1,1.6) -- (2.5,1.6);
	\draw (2.5,1.6) arc (-90:0:.2cm);

	\draw (0,4.4) -- (0,1.2);
	\draw (0,1.2) arc (180:270:.2cm);
	\draw (.2,1) -- (3.2,1);
	\draw (3.2,1) arc (-90:0:.2cm);
	\draw (3.4,1.2) -- (3.4,4);
	\draw (2.5, 2.25) arc (90:0:.2cm);
	\draw (2.7,2.05) -- (2.7,1.8);
	\draw (0,4.4) arc (180:90:.2cm);
	\draw (2.5,2.75) -- (3.2,2.75);
	\draw (-.6,5) -- ( -.6,2.95);
	\draw (-.6,2.95) arc (180:270:.2cm);
	\draw (-.6,5) arc (180:90:.2cm);
	\draw (-.4,5.2) -- (5.2,5.2);
	\draw (5.2,5.2) arc (90:0:.2cm);
	\draw (-.4,2.75) -- (-.2,2.75);
	\draw (.2,2.75) -- (1,2.75);
	
	\draw[rounded corners = 1mm] (3.6, 2.75) -- (3.8,2.75) -- (4.5,3.5);
	\draw (3.4,4) arc (180:90:.3cm);
	\draw[rounded corners = 1mm] (3.7,4.3) -- (3.9,4.3) -- (4.5,3.5);
	\draw[rounded corners = 1mm] (4.5,3.5) -- (4.8,3) -- (5.2,3);
	\draw (5.2,3) arc (-90:0:.2cm);
	\draw (5.4,3.2) -- (5.4,5);
	
	\draw [rounded corners = 1mm] (4.5,3.5) -- (5.1,4.1) --(5.1,4.4);
	\draw (5.1,4.4) arc (0:90:.2cm);
	\draw (4.9,4.6) -- (.2,4.6);
	
	\fill[white] (4.5,3.5) circle (.6cm);
	\draw (4.5,3.5) circle (.6cm);
	\draw (4.5,3.5) node{\tiny{$\vec{b}$}};
	\draw (4.2,3.8) node{\tiny{$-$}};
	\draw (4.2,3.2) node{\tiny{$+$}};
	\draw (4.8,3.8) node{\tiny{$+$}};
	\draw (4.8,3.2) node{\tiny{$-$}};

	\end{scope}
	
	
	\begin{scope}[xshift = 15cm]

	\draw (1,2) rectangle (2.5,3);
	\draw (1.75,2.5) node {\tiny{$a{+}1$}};
	\draw (1.2,2.75) node{\tiny{$-$}};
	\draw (1.2,2.25) node{\tiny{$+$}};
	\draw (2.3,2.75) node{\tiny{$+$}};
	\draw (2.3,2.25) node{\tiny{$-$}};
	\draw (1,2.25) arc (90:180:.2);
	\draw (.8,2.05) -- (.8,1.8);
	\draw (.8,1.8) arc (180:270:.2cm);
	\draw (1,1.6) -- (2.5,1.6);
	\draw (2.5,1.6) arc (-90:0:.2cm);

	\draw (2.5, 2.25) arc (90:0:.2cm);
	\draw (2.7,2.05) -- (2.7,1.8);

	\draw (2.5,2.75) -- (3.2,2.75);
	\draw (.2,5.2) -- (5.2,5.2);
	\draw (5.2,5.2) arc (90:0:.2cm);
	\draw (.2,2.75) -- (1,2.75);
	\draw (.2,2.75) arc (-90:-180:.2cm);
	\draw (0,2.95) -- (0,5);
	\draw (0,5) arc (180:90:.2cm);
	
	\draw[rounded corners = 1mm] (3.2, 2.75) -- (3.8,2.75) -- (4.5,3.5);

	\draw[rounded corners = 1mm] (4.5,3.5) -- (4.8,3) -- (5.2,3);
	\draw (5.2,3) arc (-90:0:.2cm);
	\draw (5.4,3.2) -- (5.4,5);

	\draw (4.5,4.1) ellipse (.7cm and .4cm);
	
	\fill[white] (4.5,3.5) circle (.6cm);
	\draw (4.5,3.5) circle (.6cm);
	\draw (4.5,3.5) node{\tiny{$\vec{b}$}};
	\draw (4.2,3.8) node{\tiny{$-$}};
	\draw (4.2,3.2) node{\tiny{$+$}};
	\draw (4.8,3.8) node{\tiny{$+$}};
	\draw (4.8,3.2) node{\tiny{$-$}};

	\end{scope}

\end{scope}

\end{tikzpicture}$$
\caption{The diagrams on the left have $\overline{G}$ from Families 4, 5, 6, and 7 as checkerboard graphs. In each case, an isotopy yields an alternating link.}
\label{figure:Family4567}
\end{figure}

We conclude the paper with the proof of Theorem \ref{theorem:AANontrivial}, which follows from Theorems \ref{theorem:Sign} and \ref{theorem:AAJones} and Lemmas \ref{lemma:F1-3} and \ref{lemma:F4-7}.

\begin{proof}[Proof of Theorem \ref{theorem:AANontrivial}]
Let $D$ be a strongly reduced almost alternating diagram of the link $L$ such that $D$ has the fewest number of crossings among all almost alternating diagrams of $L$. Suppose that the Jones polynomial of $L$ is 
$$V_L(t) = a_0 t^{k} + a_1 t^{k+1} + \cdots + a_{n-1} t^{k+n-1}+a_n t^{k+n},$$
where $a_0$ and $a_n$ are nonzero. First, suppose that the number of components $\ell$ of $L$ is at least two. The product of the first two coefficients and the product of the last two coefficients of $t^k\left(-t^{\frac{1}{2}}-t^{-\frac{1}{2}}\right)^{\ell - 1}$ are strictly positive. However, Theorem \ref{theorem:Sign} states that at least one of the products $a_0a_1$ or $a_{n-1}a_n$ is at most zero. Thus
$$V_L(t)\neq t^k\left(-t^{\frac{1}{2}}-t^{-\frac{1}{2}}\right)^{\ell - 1}.$$ 

Now suppose that $\ell=1$, i.e. that $L$ is a knot. We need to show that $V_L(t)\neq t^k$ for some $k\in \mathbb{Z}$. Adopting the notation of Theorem \ref{theorem:AAJones}, Lemma \ref{lemma:Dual} implies that either $P$ or $\overline{P}$ is in $\{0,2\}$. Without loss of generality assume $\overline{P}\in\{0,2\}$. Thus $\alpha_{c-3}=\pm 1$.  If any of $\alpha_0$, $\alpha_1$, or $\alpha_{c-4}$ are nonzero, then at least two coefficients of the Kauffman bracket of $D$ are nonzero, and  hence $V_L(t) \neq t^k$. 

Suppose that $\alpha_0 = \alpha_1 = \alpha_{c-4}=0$. Since $\alpha_0=0$, it follows that $P=1$. If $\overline{P}=0$, then Lemma \ref{lemma:F1-3} implies $L$ is either semi-adequate or mutant to a semi-adequate link. Theorem \ref{theorem:Semi-adequate} then implies that $V_L(t)\neq t^k$ for any $k\in\mathbb{Z}$. If $\overline{P}=2$, then Lemma \ref{lemma:F4-7} implies that $L$ is alternating, rather than almost alternating. Hence the $\overline{P}=2$ case can be discarded from consideration.

Now suppose that $L$ is Turaev genus one. Theorem \ref{theorem:mutant} implies that $L$ is mutant to an almost alternating link. The result follows from the fact that the Jones polynomial does not change under mutation.
\end{proof}

\clearpage

\bibliography{aa}{}
\bibliographystyle {amsalpha}

\end{document}